\documentclass[a4paper,10pt]{article}

\usepackage{fullpage}
\usepackage[T1]{fontenc}
\usepackage{graphicx}
\usepackage{authblk}
\usepackage{amsmath,amsfonts,amsbsy,amssymb,amsthm}
\usepackage{xcolor}
\usepackage{hyperref}
\usepackage[numbers,sort&compress]{natbib}

\setlength{\bibsep}{0pt plus 0.3ex}

\newtheorem{theorem}{Theorem}
\newtheorem{lemma}{Lemma}
\newtheorem{proposition}{Proposition}
\newtheorem{corollary}{Corollary}
\theoremstyle{definition}
\newtheorem{remark}{Remark}
\newtheorem{definition}{Definition}

\numberwithin{equation}{section}
\numberwithin{theorem}{section}
\numberwithin{proposition}{section}
\numberwithin{lemma}{section}
\numberwithin{corollary}{section}

\DeclareMathOperator{\Tr}{Tr}

\newcommand{\R}{\mathbb{R}}
\newcommand{\D}{\mathcal{D}}
\newcommand{\F}{\mathcal{F}}
\newcommand{\Rc}{\mathcal{R}}

\newcommand{\E}{\mathbb{E}}

\newcommand{\N}{\mathbb{N}}
\renewcommand{\O}{\mathcal{O}}
\newcommand{\A}{\mathcal{A}}

\newcommand{\bp}{\boldsymbol{p}}
\newcommand{\bs}{\boldsymbol{s}}

\newcommand{\floor}[1]{\lfloor #1 \rfloor}
\renewcommand{\choose}[2]{\begin{pmatrix} #1 \\ #2 \end{pmatrix}}

\renewcommand{\L}{\mathcal{L}}
\newcommand{\Var}{{\rm Var}}
\newcommand{\TT}{\mathfrak{T}}
\newcommand{\RT}{\mathfrak{R}}
\newcommand{\bq}{\boldsymbol{q}}
\newcommand{\e}{\mathbf{e}}

\title{A random walk approach to linear statistics in random tournament ensembles}
\date{}

\author[1]{Christopher H. Joyner}
\author[2]{Uzy Smilansky}

\affil[1]{\normalsize \emph{School of Mathematical Sciences, Queen Mary University of London, London, E1 4NS, United Kingdom}}
\affil[2]{\normalsize \emph{Department of Physics of Complex Systems, Weizmann Institute of Science, Rehovot 7610001, Israel}}

\begin{document}

\maketitle

\begin{abstract}
We investigate the linear statistics of random matrices with purely imaginary Bernoulli entries of the form $H_{pq} = \overline{H}_{qp} = \pm i$, that are either independently distributed or exhibit global correlations imposed by the condition $\sum_{q} H_{pq} = 0$. These are related to ensembles of so-called random tournaments and random regular tournaments respectively. Specifically, we construct a random walk within the space of matrices and show that the induced motion of the first $k$ traces in a Chebyshev basis converges to a suitable Ornstein-Uhlenbeck process. Coupling this with Stein's method allows us to compute the rate of convergence to a Gaussian distribution in the limit of large matrix dimension. 
\end{abstract}


\section{Introduction}
The idea of using a stochastic dynamical evolution to unearth the spectral properties of random matrices was first proposed by Dyson \cite{Dyson-1962}. His insight was that, by initiating a suitable Brownian motion within the space of certain invariant matrix ensembles, one could induce a corresponding motion in the eigenvalues, which is independent of the eigenvectors. Thus, solving the associated Fokker-Planck equation for the stationary solution would recover the joint probability density function for the eigenvalues. Dyson Brownian motion (DBM), as it is now known, has since become a powerful tool in random matrix theory (see for instance \cite{Anderson-2009,Forrester-2010,Erdos-2017}). In \cite{Joyner-2015} the present authors advocated an approach in which the idea of using stochastic dynamics to obtain spectral statistics was extended to Bernoulli matrix ensembles. In particular, we argued heuristically, that by initiating a suitable discrete random walk in the space of matrices, the induced motion of the eigenvalues would tend, in some fashion, to DBM in the limit of large matrix size. Then, as a consequence, the spectral properties of Bernoulli matrices would converge to those of the Gaussian orthogonal ensemble (GOE). In the present article we apply this approach to the linear-statistics of matrices associated to random tournaments and random regular tournaments. Tournament graphs are widely studied objects in combinatorics, with results and open questions regarding, their enumeration, score sequences, cycle properties and Perron-Frobenius eigenvalues for instance \cite{Spencer-1974,McKay-1990,McKay-1996,Gao-2000,Gervacio-1988,Kuhn-2012,Kirkland-2003}. However, beyond \cite{Sosoe-2016}, there appears to be little analysis from a random matrix theory perspective.

For a random (self-adjoint) matrix $M$, the linear-statistic, for some function $h$, refers to the distribution of the following random variable,
\begin{equation}\label{Eqn: Linear statistic}
\Phi_h(M) := \Tr\left(h(M)\right) = \sum_{\mu=1}^{N} h\left(\lambda_\mu(M) \right),
\end{equation}
where $\lambda_\mu(M)$ are the eigenvalues of $M$. For $N \times N$ random matrices $M$ with appropriately scaled and suitably chosen iid elements, Wigner showed \cite{Wigner-1955,Wigner-1958} that as $N \to \infty$ the expectation, for polynomial functions $h$, converges to the semicircle distribution, i.e.
\begin{equation}\label{Eqn: Wigner result}
\frac{1}{N}\E[\Tr\left(h(M)\right)] \to \frac{2}{\pi}\int_{-1}^{1} h(\lambda) \sqrt{1- \lambda^2} d\lambda \qquad N \to \infty .
\end{equation}
In addition, Wigner showed the variance satisfies $\frac{1}{N^2}\Var[h(M)] = \O(N^{-2})$. This result is therefore, in some respects, analogous to the law of large numbers in standard probability theory.

One is therefore led to the question regarding fluctuations about this mean, i.e. what is the distribution of $\Phi_h(M) - \E[\Phi_h(M)]$ for some particular random matrix ensemble?  This was first addressed by Jonsson \cite{Jonsson-1982} in the case of Wishart matrices, showing this random variable is Gaussian in the large $N$ limit. Later, this was also shown to be the case for Wigner matrices \cite{Khorunzhy-1996,Sinai-1998} and also for $\beta$-ensembles with appropriate potentials \cite{Johansson-1998} for various forms of the test function $h$. Notice there is no obvious analogy with the classic CLT, since the eigenvalues in (\ref{Eqn: Linear statistic}) are highly correlated, meaning the usual $1/\sqrt{N}$ normalisation is not required. Proving this behaviour has become a key part of the universality hypothesis within random matrix theory, since it addresses the global spectral behaviour, and has thus garnered much attention since the first results were established. For instance, many authors have attempted to classify for which test functions $h$ the Gaussian behaviour is retained \cite{Bai-2005,Dallaporta-2011,Lytova-2009,Shcherbina-2011,Sosoe-2013}. Others have investigated large deviation aspects \cite{Guionnet-2002}, rates of convergence \cite{Chatterjee-2007} or different kinds of random matrix ensembles such as band matrices \cite{Anderson-2006} or those with non-trivial correlations \cite{Chatterjee-2007,Schenker-2007}.

To show the convergence of (\ref{Eqn: Linear statistic}) for all polynomial test functions of degree $k$ one may, instead, show the joint convergence for a polynomial basis. A particularly convenient choice are the Chebyshev polynomials of the first kind
\begin{equation}\label{Eqn: Chebyshev first kind}
T_n(x) : = \cos(n\arccos(x)) = \sum_{r=0}^{\floor{\frac{n}{2}}} d^{(n)}_{r}x^{n-2r},  \qquad d^{(n)}_r = (-1)^r\frac{n}{2}\frac{(n-r-1)!}{r!(n-2r)!}2^{n-2r}.
\end{equation}
If one takes the traces 
\begin{equation}\label{Eqn: Chebyshev traces}
\Tr(T_n(M)) := \sum_{r=0}^{\floor{\frac{n}{2}}} d^{(n)}_{r}\Tr(M^{n-2r}),
\end{equation}
then it was first observed by Johansson \cite{Johansson-1998} that if $M$ is chosen from one of the standard Gaussian ensembles, then in the limit of large matrix size the random variables $(\Tr(T_1(M)),\ldots,\Tr(T_k(M)))$ converge to independent Gaussian random variables.
 
A Brownian motion approach has already been used to show convergence to independent Gaussian random variables of $\Tr(T_n(M))$ in the Gaussian unitary ensemble \cite{Cabanal-Duvillard-2001} and more general $\beta$-ensembles \cite{Lambert-2017}, as well as traces of unitary matrices $\Tr(U^n)$ in the classical compact groups \cite{Dobler-2011} and the circular $\beta$-ensembles \cite{Webb-2016}. In particular, the works \cite{Lambert-2017,Dobler-2011,Webb-2016} utilised a multivariate form of Stein's method, developed by Chatterjee \& Meckes and Reinert \& R\"{o}llin \cite{Chatterjee-2008,Reinert-2009,Meckes-2009}, to obtain rates of convergence, something which, beyond \cite{Chatterjee-2007}, is often neglected in the analysis of linear statistics. However the scenarios \cite{Cabanal-Duvillard-2001,Dobler-2011,Webb-2016} have involved invariant matrix ensembles, which have permitted the use of exact expressions for the eigenvalue motion, which are not available in this context. We therefore turn to an alternative combinatorial approach, similar to that applied in \cite{Joyner-2017} for the unimodular ensemble and \cite{Dumitriu-2013,Johnson-2015} for random regular graphs. In particular, we express the variables $\Tr(T_n(M))$ in terms of sums over non-backtracking cycles and analyse how these behave under the random walk. The difficulties arise in providing accurate bounds for the remainder terms, which involve the expectations of certain products of matrix elements with respect to the appropriate ensembles.

\vspace{10pt}

The article is outlined as follows: In Section \ref{Sec: Definitions and results} we discuss the ensembles of random tournaments and random regular tournaments, which lead to Definition \ref{Dfn: ITE} and Definition \ref{Dfn: RITE} for the matrix ensembles we call the imaginary tournament ensemble (ITE) and regular imaginary tournament ensemble (RITE) respectively. We then present our main results, given in Theorem \ref{Thm: ITE theorem} and Theorem \ref{Thm: RITE theorem}, which provide rates of convergence to independent Gaussian random variable of the first $k$ traces of Chebyshev polynomials for matrices in the ITE and RITE respectively. In Section \ref{Sec: Random walks} we attempt to give an intuitive explanation of the random walk approach, including Theorem \ref{Thm: Stein theorem} (due to \cite{Chatterjee-2008,Reinert-2009,Meckes-2009}) regarding the multidimensional exchangeable pairs approach to Stein's method, and briefly outline the the methods used to evaluate the appropriate remainder terms.

In Section \ref{Sec: Graph theoretical tools} we introduce some graph theoretical tools required for subsequent analysis. Sections \ref{Sec: ITE} and \ref{Sec: RITE} are dedicated to showing how to construct suitable random walks for the ITE and RITE respectively. Specifically, we prove Propositions \ref{Prop: ITE evolution} and \ref{Prop: RITE evolution} (respectively) which show the remainders contained in Theorem \ref{Thm: Stein theorem} are small enough to allow for the results of Theorem \ref{Thm: ITE theorem} and Theorem \ref{Thm: RITE theorem}. In particular, although interesting in its own right, the ITE will serve as an illustrative example that the approach works in simple settings and will help introduce ideas also needed for the more complicated RITE.

Finally, in Section \ref{Sec: Conclusion} we offer some concluding thoughts and remarks about possible extensions and in the appendix we collect some necessary theorems, proofs and identities. In particular, Appendix \ref{App: Expectation in RITE} contains a proof for the growth rate of expectation of products of matrix elements in the RITE. This is adapted from the work of McKay \cite{McKay-1990} on the number of regular tournaments and is critical in estimating the remainders in Proposition \ref{Prop: RITE evolution}.

\section{Main results}\label{Sec: Results}

\subsection{Definitions and results}\label{Sec: Definitions and results}

A tournament graph on $N$ vertices is a complete graph in which every edge has a specific orientation (see e.g. Figure \ref{Fig: RW for ITE}). Player $p$ is said to win against player $q$ (equivalently player $q$ loses against player $p$) if there is a directed edge from $p$ to $q$. This is represented by an a adjacency matrix $A$ admitting the property that $A_{pq} = 1-A_{qp} = 1$ (resp. $0$) if player $p$ wins (resp. loses). Since a player can't play themselves the diagonal $A_{pp} = 0$. We denote the set of tournaments on $N$ vertices as $\TT_N$, with cardinality $|\TT_N| = 2^{N(N-1)/2}$ - the number of possible choices of direction for each edge.

If all players win the same number of games, or equivalently the number of incoming edges into a vertex is equal to the number of out going edges for every vertex, then the tournament graph is said to be \emph{regular} (see e.g. Figure \ref{Fig: RW for RITE}). This is characterised by the condition $\sum_{q} A_{pq} = (N-1)/2$ for all $p=1,\ldots,N$, which enforces that $N$ must be odd. We denote the set of \emph{regular tournaments} on $N$ vertices by $\RT_N$. An exact formula for $|\RT|$ is not available however McKay showed \cite{McKay-1990} (improving on an earlier estimate of Spencer \citep{Spencer-1974}) that for large $N$ 
\begin{equation}\label{No. RT}
|\RT_N| = \frac{2^{(N^2-1)/2}e^{-1/2}}{\pi^{(N-1)/2}N^{N/2-1}} \left( 1+\mathcal{O}(N^{-1/2+\epsilon})\right).
\end{equation}
In particular, one observes that $|\RT_N|/|\TT_N| \to 0$ as $N \to \infty$ and therefore one cannot immediately infer properties of regular tournaments from tournaments by ergodicity arguments. Hence, the restriction of the rows sums must be dealt with another manner.

Due to the non-symmetric nature of the adjacency matrices the eigenvalues are, in general, complex. However applying the simple transformation $H = i(2A - (\mathbf{E}_N - \mathbf{I}_N))$ (where $i = \sqrt{-1}$ and $\mathbf{E}_N$ is the $N \times N$ matrix in which every element is 1) brings the matrices into a self-adjoint form. Thus $A_{pq} = 0$ (resp. $1$) corresponds to $H_{pq} = +i$ (resp. $-i$) for all off-diagonal elements $p \neq q$ and $H_{pp} = 0$ for all $p = 1,\ldots,N$. Importantly this means taking complex conjugation yields $\overline{H} = - H$, which in turn implies that if $\lambda$ is an eigenvalue of $H$ then so is $-\lambda$, with the eigenvectors being complex conjugates of each other. This spectral symmetry implies
\begin{equation}\label{Eqn: Odd identity}
\Tr(H^n) \equiv 0 \ \ \forall \ n \ \text{odd} \ .
\end{equation}
In order to make a distinction we say that $H$ is an \emph{imaginary tournament matrix} (resp. \emph{regular imaginary tournament matrix}) if $A = \frac{1}{2}(\mathbf{E}_N - \mathbf{I}_N -iH)$ is a tournament (resp. regular tournament). Therefore, with a slight abuse of notation, we write either $H \in \TT_N$ or $H \in \RT_N$ respectively.

\begin{definition}[Imaginary tournament ensemble]\label{Dfn: ITE}Let $\TT_N$ be the set of imaginary tournament matrices of size $N$. Then the imaginary Bernoulli ensemble (ITE) is given by the set of $H \in \TT_N$ with the uniform probability measure $P(H) = |\TT_N|^{-1}$.
\end{definition}
\begin{definition}[Regular imaginary tournament ensemble]\label{Dfn: RITE}Let $\RT_N$ be the set of regular imaginary tournament matrices of size $N$ (with $N$ being odd). Then the random imaginary tournament ensemble (RITE) is given by the set of $H \in \RT_N$ with the uniform probability measure $P(H) = |\RT_N|^{-1}$.
\end{definition}
Note that Definition \ref{Dfn: ITE} is equivalent to choosing the entires $H_{pq}$ equal to $\pm i$ independently and with equal probability, whereas Definition \ref{Dfn: RITE} is equivalent to choosing $H_{pq}$ equal to $\pm i$ with equal probability but with the constraint that $\sum_{q} H_{pq} = 0$ for all $p=1,\ldots,N$. 

Due to the independence of the elements in the ITE, many of the techniques developed to treat Wigner matrices are directly applicable, for example the universality of local statistics has been established in this case \cite{Sosoe-2016}. Moreover, since $H$ is related to $A$ by a (complex) rank one perturbation, the spectral properties of the ITE can be related to the complex eigenvalues of random tournaments \cite{Sosoe-2016}. However, to the best of our knowledge, there are no such results for the RITE, although linear statistics \cite{Dumitriu-2013,Johnson-2015}, local semicircle estimates \cite{Bauerschmidt-2015a,Bauerschmidt-2017} and local universality results \cite{Bauerschmidt-2015b} have been obtained for random regular graphs using switching methods.

\begin{theorem}[Convergence for ITE]\label{Thm: ITE theorem}Let $Z = (Z_2,Z_3,\ldots,Z_k)$ be a collection of iid random Gaussian variables with mean 0 and variance $\sigma_n^2 = \E[Z_n^2] = n$. Let $H$ be chosen according to the ITE and define the random variables
\begin{equation}\label{Eqn: Centred Chebyshev}
Y_n(H):= \Tr\left(T_{2n}\left(\frac{H}{\sqrt{4N}}\right)\right) - \E\left[ \Tr\left(T_{2n}\left(\frac{H}{\sqrt{4N}}\right)\right) \right].
\end{equation}
Then, for $Y(H) = (Y_2(H),Y_3(H)\ldots,Y_k(H))$, $\phi \in C^2(\R^{k-1})$ with $k$ fixed and $N$ sufficiently large
\[
|\E[\phi(Y(H))] - \E[\phi(Z)]| \leq \O(N^{-1})\| \phi \| + \O(N^{-1})\|\nabla \phi \| + \O(N^{-1})\|\nabla^2 \phi \|.
\]
where
\begin{equation}\label{Eqn: Sup deriv norm}
\|\nabla^j\phi \| := \sup_{Q}\max_{n_1,\ldots,n_j}\left| \frac{\partial^j \phi(Q)}{\partial Q_{n_1}\ldots \partial Q_{n_j}}\right|.
\end{equation}
\end{theorem}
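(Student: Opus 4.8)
The plan is to exhibit $W := Y(H)$ as half of an exchangeable pair and then appeal to the multivariate exchangeable-pairs bound of Theorem \ref{Thm: Stein theorem}. I would use the reversible random walk on $\TT_N$ whose elementary step selects an off-diagonal edge $(P,Q)$ uniformly at random and resamples $H_{PQ}$ to a fresh $\pm i$ (with $H_{QP}=-H_{PQ}$ forced by self-adjointness). Because the uniform measure on $\TT_N$ is stationary and reversible for this walk, the pair $(H,H')$ and hence $(W,W'):=(Y(H),Y(H'))$ is exchangeable. The whole problem then reduces to controlling the three inputs of Theorem \ref{Thm: Stein theorem}: (i) the drift remainder $R$ in the approximate linearity $\E[W'-W\mid H]=-\Lambda W+R$; (ii) the discrepancy between the conditional second moments $\E[(W'_m-W_m)(W'_n-W_n)\mid H]$ and the target $2(\Lambda\Sigma)_{mn}$, with $\Sigma=\mathrm{diag}(2,3,\ldots,k)$; and (iii) a third-moment term in the increments. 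This is exactly the content I would isolate as Proposition \ref{Prop: ITE evolution}.

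The engine of the argument is the increment produced by a single edge flip. I would expand each centred trace $Y_n$ using the non-backtracking cycle representation of Section \ref{Sec: Graph theoretical tools}, writing $Y_n$ as a signed sum over non-backtracking closed walks of length $2n$ on the complete graph, weighted by products of entries $H_{pq}$. Since $H'$ and $H$ agree off the single edge $(P,Q)$, only cycles through that edge contribute to $W'_n-W_n$, and for fixed $k$ the expansion of the trace truncates. Averaging over the fresh value of $H_{PQ}$ kills every monomial in which the flipped edge occurs to an odd power, so $\E[W'_n-W_n\mid H]$ is (minus) the ``odd-in-$H_{PQ}$'' part of $Y_n$, averaged over the uniformly chosen edge. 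Because each fluctuating monomial of $Y_n$ carries a near-constant number of odd edges, this produces a diagonal drift $\Lambda=\mathrm{diag}(\lambda_2,\ldots,\lambda_k)$ with $\lambda_n$ of order $N^{-2}$ (one over the number of edges, times an $n$-dependent constant), plus a genuine remainder $R$ collecting the monomials whose odd-edge count deviates from the mean.

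For the diffusion coefficients I would exploit the independence of the entries in the ITE together with the deterministic identity $H_{pq}^2=-1$: expectations of products of entries factorise over distinct edges, and any edge traversed an odd number of times averages to zero. This makes the conditional second moments computable and, crucially, shows that the off-diagonal entries of the covariance vanish to leading order; the Chebyshev basis is chosen precisely so that $\E[(W'_m-W_m)(W'_n-W_n)\mid H]$ concentrates on $2\lambda_n n\,\delta_{mn}$, which fixes the limiting variance $\sigma_n^2=n$ and the asymptotic independence of the components. Each nonzero increment is of size $\O(N^{-1})$ while $\E[(W'_n-W_n)^2\mid H]$ is of order $N^{-2}$, so the third-moment sum is $\O(N^{-3})$ and, divided by $\lambda_n\sim N^{-2}$ inside the Stein bound, contributes at the target order $\O(N^{-1})$.

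The main obstacle is the same in every one of these steps: obtaining sharp enough bounds on the combinatorial sums of products of matrix entries so that the remainder $R$, the fluctuation of the conditional covariance about $2\Lambda\Sigma$, and the third-moment term are all one factor of $N$ smaller than the naive estimate. Concretely, I must count non-backtracking closed walks (and pairs of walks) of bounded length through a fixed edge and track the cancellations forced by $\overline{H}=-H$, showing that the non-tree-like, higher-multiplicity configurations that spoil exact linearity and exact diagonality of the covariance are suppressed by extra powers of $N$. Here the independence of the ITE entries makes the bookkeeping tractable---each edge contributes an independent factor---so this serves as the warm-up for the genuinely hard RITE case, where the constraint $\sum_q H_{pq}=0$ destroys independence and the same expectations must instead be estimated through the asymptotics adapted from McKay. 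Once these estimates give $\E\|R\|=\O(N^{-3})$ and covariance fluctuations of order $N^{-3}$, substitution into Theorem \ref{Thm: Stein theorem} yields the stated $\O(N^{-1})$ bound against $\|\phi\|$, $\|\nabla\phi\|$ and $\|\nabla^2\phi\|$.
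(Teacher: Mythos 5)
Your proposal follows essentially the same route as the paper: the uniform edge-flip random walk on $\TT_N$ giving an exchangeable pair, the non-backtracking cycle expansion of $Y_n$ from Lemma \ref{Lem: Non-backtracking}, and combinatorial bounds on the drift, covariance and third-moment remainders (your Proposition \ref{Prop: ITE evolution}) fed into Theorem \ref{Thm: Stein theorem}, with all the orders ($\lambda_n \sim nN^{-2}$, remainders $\O(N^{-1})$ after normalising by $\alpha_N$) matching the paper's. The only deviation---resampling $H_{PQ}$ rather than deterministically flipping it---is a lazy version of the paper's walk and merely halves $\alpha_N$, so the argument is unaffected.
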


\begin{theorem}[Convergence for RITE]\label{Thm: RITE theorem}Let $Z$ and $Y_n(H)$ be as in Theorem \ref{Thm: ITE theorem} and let $H$ be chosen according to the RITE. Then, for $Y(H) = (Y_2(H),Y_3(H)\ldots,Y_k(H))$, $\phi \in C^2(\R^{k-1})$ with $k$ fixed and $N$ sufficiently large. Then
\begin{equation}\label{Eqn: RITE distributional distance}
|\E[\phi(Y(H))] - \E[\phi(Z)]| \leq \O(N^{-1/2})\| \phi \| + \O(N^{-1})\|\nabla \phi \| + \O(N^{-1})\|\nabla^2 \phi \|.
\end{equation}
\end{theorem}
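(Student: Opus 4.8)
The plan is to treat the RITE exactly as one would the ITE — via a reversible random walk on the state space combined with the multivariate exchangeable-pairs form of Stein's method quoted as Theorem \ref{Thm: Stein theorem} — and then to isolate the single place where the row-sum constraint $\sum_q H_{pq}=0$ alters the estimates. Concretely, I would first build a Markov chain on $\RT_N$ whose single step is a \emph{switching}: choose a small set of directed edges and reverse their orientations in a pattern that leaves every row sum unchanged (the analogue of the double-edge swaps used for random regular graphs), so that the uniform measure on $\RT_N$ is stationary and reversible. This is forced by the constraint: in the ITE one may simply flip a single entry $H_{pq}\to -H_{pq}$, but that move violates regularity, so in the RITE the reversal must be balanced across several edges. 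Running one step from a uniformly chosen $H$ produces $H'$, and the pair $(Y(H),Y(H'))$ with $Y_n(H)$ as in \eqref{Eqn: Centred Chebyshev} is exchangeable. The proof then reduces to verifying the hypotheses of Theorem \ref{Thm: Stein theorem}: an approximate-linearity (drift) condition $\E[Y'-Y\mid H]=-\Lambda\,Y(H)+\text{(remainder)}$ with $\Lambda$ diagonal, a covariance condition $\E[(Y'-Y)(Y'-Y)^{\mathsf T}\mid H]=2\Lambda\Sigma+\text{(remainder)}$ with $\Sigma=\mathrm{diag}(\sigma_n^2)=\mathrm{diag}(n)$, and smallness of the third-order remainder. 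These are precisely the content of Proposition \ref{Prop: RITE evolution}.

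To compute the two conditional expectations I would first rewrite the Chebyshev traces combinatorially. Expanding $T_{2n}$ through \eqref{Eqn: Chebyshev traces} and writing each $\Tr(H^m)=\sum H_{p_1p_2}\cdots H_{p_mp_1}$ as a sum over closed walks, the Chebyshev combination cancels the backtracking contributions and leaves $Y_n(H)$ as a centred sum over non-backtracking closed walks of length $2n$ on the complete graph, weighted by products of the imaginary entries. The increment $Y_n(H')-Y_n(H)$ is then a sum over those non-backtracking cycles that traverse at least one switched edge; taking the conditional expectation over the choice of switch and then the expectation over $H$ turns every surviving term into a mixed moment $\E[H_{p_1q_1}\cdots H_{p_\ell q_\ell}]$ under the uniform measure on $\RT_N$. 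The leading configurations should reproduce the drift $-\Lambda Y$ and the diagonal covariance $2\Lambda\Sigma$, so that the off-diagonal covariances vanish to leading order and the limiting Gaussians are independent with variances $n$.

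The main obstacle — and the reason the RITE rate in \eqref{Eqn: RITE distributional distance} degrades from the $\O(N^{-1})$ of the ITE to $\O(N^{-1/2})$ on the $\|\phi\|$ term — is the evaluation of these mixed moments under the constraint. In the ITE, independence forces such a moment to be either $0$ or $(\pm i)^{\ell}$ according to whether every edge is repeated an even number of times, which is what produces the clean cancellations; in the RITE the global condition $\sum_q H_{pq}=0$ couples all entries and the product no longer factorises. Here I would invoke the McKay-type asymptotic expansion established in Appendix \ref{App: Expectation in RITE}, which gives the leading size of $\E[\prod_j H_{p_jq_j}]$ together with its first correction, the latter of relative order $N^{-1/2}$, consistent with the error in \eqref{No. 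RT}. The hard combinatorial bookkeeping is then to classify the non-backtracking cycle configurations by their vertex- and edge-multiplicities, to verify that the generic configurations recover the Gaussian drift and covariance, and to show that the constraint-induced corrections, summed over all surviving configurations of length up to $2k$, accumulate to at most $\O(N^{-1/2})$ in the drift and $\O(N^{-1})$ in the covariance and third-moment remainders. Feeding these bounds into Theorem \ref{Thm: Stein theorem} yields exactly the claimed estimate; the ITE theorem follows by the identical argument but with the stronger $\O(N^{-1})$ moment control furnished by independence.
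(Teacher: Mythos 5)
Your outline reproduces the paper's strategy almost exactly: the row-sum-preserving switching you postulate is realised in the paper as the minimal such move, namely reversal of a uniformly chosen directed triangle (of which every regular tournament contains exactly $d_N = N(N-1)(N+1)/4$); the resulting exchangeable pair is fed into Theorem \ref{Thm: Stein theorem}; the traces are expanded over non-backtracking cycles via Lemma \ref{Lem: Non-backtracking}; and your drift, covariance and third-moment conditions are precisely the content of Proposition \ref{Prop: RITE evolution}, whose moment inputs are supplied by the McKay-type estimate of Lemma \ref{Lem: Regular tournament expectation}. You even predict the correct distribution of the error — $\O(N^{-1/2})$ in the drift only, $\O(N^{-1})$ in the diffusion and third-moment remainders.

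There is, however, one concrete gap: your proposed mechanism for the $\O(N^{-1/2})$ term. Appendix \ref{App: Expectation in RITE} does not establish a two-term asymptotic expansion of $\E[\prod_j H_{p_jq_j}]$ with a first correction of relative order $N^{-1/2}$ analogous to the error in (\ref{No. RT}); it proves only the upper bound $\E[H_E] = \O(N^{-|E|/2})$, and no finer expansion is used (or available) anywhere in the argument. The half power instead has a purely combinatorial origin specific to the drift term: because $\E[H_\omega] \neq 0$ in the RITE, the centering constant in $Y_n(H)$ must be re-expressed using the regularity identity $\sum_{r \neq p,q} H_{qr} = -H_{qp}$, and Proposition \ref{Prop: Second part decomp} converts it into sums over auxiliary walk families — in particular the set $A_{2n}$ of (\ref{Eqn: Walks A dfn}), whose members have up to $2n+1$ distinct vertices with all $2n+2$ edges free. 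The Cauchy--Schwarz/equivalence-class estimate, combined with the isolated-vertex and Betti-number counting of Corollary \ref{Cor: RITE vertex edge identity}, then yields a contribution $\sqrt{\O(N^{2n+1})}$, which is exactly Lemma \ref{Lem: Drift remainder pre estimates} Part \ref{Eqn: Second part estimate} and, after normalisation, gives $\E|R_n(H)| = \O(N^{-1/2})$. If you executed your plan literally, you would stall at this step, since the ``first correction'' you invoke is never proved; what is needed in its place is the walk decomposition into the classes $A_r$, $B_r$, $C_r$, $D_r$ together with the graph-theoretic bookkeeping of Section \ref{Sec: Graph theoretical tools}.
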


\begin{proof}The proof of Theorem \ref{Thm: ITE theorem} requires incorporating the results of Proposition \ref{Prop: ITE evolution} into Theorem \ref{Thm: Stein theorem}. For Theorem \ref{Thm: RITE theorem} we incorporate the results of Proposition \ref{Prop: RITE evolution} into Theorem \ref{Thm: Stein theorem}.
\end{proof}

Note that we exclude all the odd Chebyshev polynomials since they are comprised entirely of odd traces (see Equation (\ref{Eqn: Chebyshev first kind})) and so by (\ref{Eqn: Odd identity}) they are identically zero. In addition we have $\Tr(H^2) = \sum_{p,q}H_{pq}H_{qp} = N(N-1)$ for all $H$, which means $\Tr[T_2(H/\sqrt{4N})]$ is constant\footnote{One may consider this at odds with the Gaussian case but it was shown in \cite{Maciazek-2015} the first two moments in the Gaussian $\beta$-ensembles can be scaled in such a way that they may be considered independently of all other moments.}

\subsection{Outline of ideas and methods}\label{Sec: Random walks}

In order to prove Theorems  \ref{Thm: ITE theorem} and \ref{Thm: RITE theorem} we introduce random walks within $\TT_N$ and $\RT_N$ with two properties. Firstly, the stationary distributions correspond to $P(H) =  |\TT_N|^{-1}$ and $P(H) = |\RT_N|^{-1}$, as per Definitions \ref{Dfn: ITE} and \ref{Dfn: RITE} respectively. Secondly, the induced motion of the random variable $Y(H)$ will be closely described by a process, whose stationary distribution is given by $Z = (Z_2,Z_3,\ldots,Z_k)$, as in Theorems \ref{Thm: ITE theorem} and \ref{Thm: RITE theorem}.

More precisely, suppose that at some discrete-time $t \in \N$ our random walker is situated at the matrix $H$, then we have a transition probability $\rho(H \to H')$ for the walker to be at the matrix $H'$ at time $t+1$ later. From this one may track how the corresponding variable $Y_n(H)$ changes to $Y_n(H')$. For instance, since this is a Markov process, the expected change is given by
\begin{equation}\label{Eqn: Drift coefficient}
\E[\delta Y_n | H] := \sum_{H'} \rho(H \to H') [Y_n(H') - Y_n(H)].
\end{equation}
Similarly fluctuations are obtained by calculating the second moment
\begin{equation}\label{Eqn: Diffusion coefficient}
\E[\delta Y_n \delta Y_m | H] := \sum_{H'} \rho(H \to H') [Y_n(H') - Y_n(H)][Y_m(H') - Y_m(H)].
\end{equation}
Now suppose that, if we design our random walk correctly, we observe the moments take the form
\begin{eqnarray}
\E[\delta Y_n | H] & =&  \alpha_N[ - n Y_n(H) + R_n(H) ]\\
\E[\delta Y_n \delta Y_m |H] & = & \alpha_N[2n^2\delta_{nm} + R_{nm}(H) ],
\end{eqnarray}
where $\alpha_N$ is a certain constant depending only on $N$ and $R_n(H), R_{nm}(H)$ are small remainders (the nature of small will be clarified later). Then, for arbitrary test functions $f \in C^3(\R^{k-1})$, expanding $f(Y(H')) = f(Y(H) + \delta Y(H,H'))$ in a Taylor series gives
\begin{align}
 \frac{1}{\alpha_N}\E[\delta f| H]
 & =  \frac{1}{\alpha_N}\sum_{n=2}^k \E[\delta Y_n|H]\frac{\partial f}{\partial Y_n} + \frac{1}{2} \sum_{n,m=2}^k \E[\delta Y_n \delta Y_m|H]\frac{\partial^2 f}{\partial Y_n\partial Y_m} + \E[S_f(H,H')|H]  \nonumber \\
 & =  \A f(Y(H))  +  \sum_{n=2}^k R_n(H)\frac{\partial f}{\partial Y_n} +\frac{1}{2} \sum_{n,m=2}^k R_{nm}(H)\frac{\partial^2 f}{\partial Y_n\partial Y_m} + \frac{1}{\alpha_N}\E[S_f(H,H')|H] ,  \label{Eqn: Observable evolution}
\end{align}
with remainder $S_f(H,H')$ and operator $\A$ given by
\begin{equation}\label{Eqn: Stein operator}
\A := \sum_{n=2}^k \left[n^2\frac{\partial^2 }{\partial X_n^2} - nX_n\frac{\partial}{\partial X_n}\right].
\end{equation}
If the Markov process is started from a unique stationary state, then the distributions of $H$ and $H'$ will be the same, in which case the random variables are referred to as an \emph{exchangeable pair}. Moreover, the expected change in $f$ satisfies $\E[\delta f] = \E[f(Y(H'))] - \E[f(Y(H))] = 0$, which, in turn, means $0 = \alpha_N^{-1}\E[\delta f] = \E[ \A f(Y(H))] + \E[\Rc(H)]$, where $\Rc(H)$ denotes the total remainder in (\ref{Eqn: Observable evolution}). The connection with the Gaussian distribution $Z$ now emerges, since if it were the case the remainder $\E[\Rc(H)]$ is equal to 0 for all test functions $f$ then we would have the following result, known as Stein's Lemma.

\begin{lemma}[Stein's Lemma]\label{Lem: Stein's lemma}Let $\A$ be the operator given in (\ref{Eqn: Stein operator}). Then $\E[\A f(Z)] = 0$ for all $f \in C^2(\R^k)$ if and only if $Z = (Z_2,Z_3,\ldots,Z_k)$, where $Z_n \sim N(0,n)$.
\end{lemma}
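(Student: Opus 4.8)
The plan is to recognize that $\A$ is the generator of a product of one-dimensional Ornstein--Uhlenbeck processes, one in each coordinate $X_n$ ($n=2,\ldots,k$), each with invariant law $N(0,n)$, and to prove both implications directly by duality with the characteristic function rather than by appealing to semigroup theory.

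For the ``if'' direction I would assume $Z_n$ independent with $Z_n \sim N(0,n)$. By linearity of $\A$ it suffices to show $\E[n^2 \partial^2_{X_n} f(Z) - n Z_n \partial_{X_n} f(Z)] = 0$ for each $n$. Conditioning on the remaining coordinates (using independence) reduces this to the one-dimensional Gaussian integration-by-parts identity $\E[W g(W)] = \sigma^2 \E[g'(W)]$ for $W \sim N(0,\sigma^2)$, applied with $W = Z_n$, $\sigma^2 = n$ and $g = \partial_{X_n} f$, which yields $n\,\E[Z_n \partial_{X_n} f] = n^2 \E[\partial^2_{X_n} f]$. Summing over $n$ gives $\E[\A f(Z)] = 0$.

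For the ``only if'' direction I would test against the real and imaginary parts of the bounded functions $f(X) = e^{i\sum_n t_n X_n}$ and write $\psi(t) = \E[e^{i\sum_n t_n Z_n}]$ for the characteristic function. A short computation using $\partial_{X_n} f = i t_n f$, $\partial^2_{X_n} f = -t_n^2 f$ and $\E[Z_n f(Z)] = -i\,\partial_{t_n}\psi$ turns the hypothesis $\E[\A f(Z)] = 0$ into the first-order linear PDE $\sum_n n t_n \partial_{t_n}\psi(t) = -\left(\sum_n n^2 t_n^2\right)\psi(t)$, together with $\psi(0) = 1$. Solving by the method of characteristics --- the integral curves of the vector field $\sum_n n t_n \partial_{t_n}$ are $t_n(s) = c_n e^{ns}$, along which $\tfrac{d}{ds}\log\psi = -\sum_n n^2 t_n(s)^2$ integrates explicitly and the curve limits to the origin as $s\to-\infty$ --- forces $\psi(t) = \exp\!\left(-\tfrac12\sum_n n t_n^2\right)$. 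This is precisely the characteristic function of a vector of independent $N(0,n)$ variables, so by uniqueness of characteristic functions $Z$ has the claimed distribution.

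The main technical point to address is integrability, so that every expectation above is well defined and the integration by parts carries no boundary contribution. In the ``if'' direction this is standard once $f$ is restricted to $C^2$ functions of at most polynomial growth (against which the Gaussian density and its derivative decay), and I would state the lemma for this class. In the ``only if'' direction the trigonometric test functions are bounded with bounded first and second derivatives, so the only delicate term is $n X_n \partial_{X_n} f$, whose expectation requires $\E|Z_n| < \infty$; I would either build this finiteness into the hypotheses (a finite first, or second, moment for $Z$) or recover it beforehand by testing $\A$ against a suitable fixed truncation, after which the characteristic-function argument proceeds unchanged. An alternative, should one prefer to avoid these caveats entirely, is to invoke the ergodicity of the Ornstein--Uhlenbeck semigroup generated by $\A$ and the uniqueness of its invariant measure, but the characteristic-function route above is more self-contained.
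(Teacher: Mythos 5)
Your proof is correct, but your converse takes a genuinely different route from the paper's. The ``if'' directions essentially coincide: your coordinatewise Gaussian integration by parts $\E[W g(W)] = \sigma^2\E[g'(W)]$ is the same computation the paper phrases as stationarity of the Gaussian density under the adjoint $\A^*$ (Equation (\ref{Eqn: FP eqn})). For the ``only if'' direction, however, the paper does not argue via characteristic functions at all: it invokes the explicit Ornstein--Uhlenbeck semigroup solution of Stein's equation (Proposition \ref{Prop: Stein sol}), namely that for any test function $\phi$ the function $f(X) = \int_0^{\infty} dt\, \E_Z[\phi(\tilde{X}(X,Z;t))]$ satisfies $\A f = \E[\phi(Z)] - \phi$, so that the hypothesis $\E[\A f(Z')] = 0$, applied to this particular $f$, yields $\E[\phi(Z')] = \E[\phi(Z)]$ for every $\phi$ and hence equality in distribution. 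The paper's route buys economy: Proposition \ref{Prop: Stein sol} and the derivative bounds of Lemma \ref{Lem: Function bounds} are needed anyway for Theorem \ref{Thm: Stein theorem}, and the semigroup representation automatically supplies an admissible $f$ with controlled derivatives. Your route buys self-containedness: testing against $e^{i\sum_n t_n X_n}$ and solving the transport equation $\sum_n n t_n \partial_{t_n}\psi = -\left(\sum_n n^2 t_n^2\right)\psi$ by characteristics avoids constructing the semigroup solution entirely, at the price of the integrability caveats you correctly flag --- the lemma's quantification over all of $C^2(\R^k)$ is loose in both treatments, and your truncation device for first extracting $\E|Z_n| < \infty$ (needed so that $\psi \in C^1$ and $\E[Z_n e^{i t\cdot Z}] = -i\partial_{t_n}\psi$) is the standard and adequate fix. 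One cosmetic repair: rather than integrating $\frac{d}{ds}\log\psi$, which presumes $\psi$ nonvanishing along the characteristic curve, integrate the linear ODE $\frac{d}{ds}\psi(t(s)) = -\left(\sum_n n^2 t_n(s)^2\right)\psi(t(s))$ directly from $s = -\infty$, using continuity of $\psi$ at the origin and $\psi(0)=1$; this produces $\psi(t) = \exp\left(-\frac{1}{2}\sum_n n t_n^2\right)$ with no positivity assumption.
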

\begin{proof}One should consult e.g. Lemma 1 in \cite{Meckes-2009} for details. Although briefly - using the stationarity of the solution with respect to $\A^*$ (see Equation (\ref{Eqn: FP eqn})), integration by parts yields $\E[\A f(Z)] := \int dZ \ P(Z) \A f(Z) = \int dZ  \ f(Z) \A^*P(Z) = 0$ for any $f \in C^2(\R^k)$ and thus establishes the first implication. For the converse one requires the exact form of the solution to equation (\ref{Eqn: Stein equation}) presented in Proposition \ref{Prop: Stein sol} in Appendix \ref{Sec: Stein solution}.
\end{proof}

Of course the remainder will not, in general, be zero but one might expect that if it is close (in some appropriate manner) then the corresponding variable $Y(H)$ will be close to $Z$. Stein's realisation was that $\A$ and $f$ could be connected via an auxiliary test function $\phi$ in what is now known as \emph{Stein's equation}
\begin{equation}\label{Eqn: Stein equation}
\A f(x) = \E[\phi(Z)] -  \phi(x),
\end{equation}
with $Z$ as in Lemma \ref{Lem: Stein's lemma}. Thus taking the expectation with respect to $Y(H)$ gives $|\E[\phi(Y)] - \E[\phi(Z)]| = |\E [\A f(Y)] |$. The aim is therefore to find a bound for $|\E [\A f(Y)] |$ using the function $\phi$, as this will allow for an estimate on the distributional distance between $Y$ and $Z$. This idea was initially developed by Charles Stein as an alternative method for proving the classical CLT \cite{Stein-1972}. Stein's method now refers to the overall technique of recovering the distributional distance from bounding the quantity $\E[\A f(Z)]$. For readers unfamiliar with the basics of Stein's method, the review by Ross \cite{Ross-2011} provides an excellent introduction and overview of the different ways this may be achieved.

The work of G\"{o}tze \cite{Gotze-1991} and Barbour \cite{Barbour-1990} in the early 90s allowed for an extension of Stein's method to multivariate Gaussian distributions and established an explicit connection between Stein's method and Markov processes. Using these ideas a number of authors adapted the use of the exchangeable pairs mechanism to multivariate Gaussian distributions \cite{Chatterjee-2008,Reinert-2009,Meckes-2009} (the thesis of D\"{o}bler offers an excellent overview of this \cite{Dobler-2012}), from which the following theorem is obtained.

\begin{theorem}\label{Thm: Stein theorem}Let $(M,M')$ be an exchangeable pair of $N \times N$ random self-adjoint matrices with $\alpha_N$ a constant depending only on $N$ and $Z$ the multi-dimensional Gaussian random variable in Theorem \ref{Thm: ITE theorem}. If the random variable $Y(M) = (Y_2(M),\ldots,Y_n(M))$ satisfies
\begin{eqnarray}
\frac{1}{\alpha_N}\E[\delta Y_n|M] & = & - n Y_n(M) + R_n(M) \label{Eqn: Drift term} \\
\frac{1}{\alpha_N}\E[\delta Y_n \delta Y_m |M] & = &  2n^2 \delta_{nm} + R_{nm}(M) \label{Eqn: Diffusion term} \\
\frac{1}{\alpha_N}\E[ |\delta Y_n \delta Y_m \delta Y_l | |M] & =& R_{nml}(M).  \label{Eqn: Remainder term}
\end{eqnarray}
Then for all $\phi \in C^2(\R^k)$ we have
\begin{equation}\label{Eqn: Quantitative bound}
|\E[\phi(Y(M))] - \E[\phi(Z)]| \leq c_1\Rc^{(1)}\| \phi \| + c_2\Rc^{(2)} \|\nabla \phi \| + c_3\Rc^{(3)} \|\nabla^2\phi\|,
\end{equation}
where $\|\nabla^j \phi \|$ is given in (\ref{Eqn: Sup deriv norm}), $c_j$ are fixed positive constants and $\Rc^{(j)} = \sum_{n_1,\ldots,n_j}\E|R_{n_1\ldots n_j}(M)|$.
\end{theorem}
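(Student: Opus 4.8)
The plan is to follow the now-standard route of multivariate Stein's method via exchangeable pairs, exploiting that the operator $\A$ in (\ref{Eqn: Stein operator}) is engineered to be the generator of the Ornstein--Uhlenbeck diffusion whose unique invariant law is the target Gaussian $Z$ (the content of Lemma \ref{Lem: Stein's lemma}). The starting point is Stein's equation (\ref{Eqn: Stein equation}): given $\phi$, let $f$ solve $\A f(x) = \E[\phi(Z)] - \phi(x)$. Evaluating along $Y = Y(M)$ and taking expectations collapses the problem to a single quantity, since $|\E[\phi(Y)] - \E[\phi(Z)]| = |\E[\A f(Y)]|$. The whole argument therefore reduces to (i) producing a solution $f$ with good control on its derivatives, and (ii) bounding $\E[\A f(Y)]$ using the exchangeable-pair hypotheses (\ref{Eqn: Drift term})--(\ref{Eqn: Remainder term}).

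For step (i) I would solve Stein's equation explicitly through the OU semigroup. Writing $P_t$ for the semigroup generated by $\A$ --- which acts coordinatewise, contracting the $n$-th variable at rate $n$ and adding Gaussian noise with stationary variance $n$ --- one sets $f(x) = \int_0^\infty (P_t\phi(x) - \E[\phi(Z)])\,dt$. The spectral gap of $\A$ makes $P_t\phi$ converge to $\E[\phi(Z)]$ exponentially fast, so the integral converges and solves (\ref{Eqn: Stein equation}); this is Proposition \ref{Prop: Stein sol}. The essential feature I would extract is the smoothing estimate: since each $P_t$ convolves with a Gaussian, Gaussian integration by parts lets one transfer a derivative off $\phi$ at the cost of a weight $e^{-nt}/\sqrt{n(1-e^{-2nt})}$, which behaves like $t^{-1/2}$ near $t=0$ (hence is integrable) and is exponentially small at infinity. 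This yields constants $\kappa_0,\kappa_1,\kappa_2$, depending only on the fixed $k$, with $\|\nabla f\| \le \kappa_0 \|\phi\|$, $\|\nabla^2 f\| \le \kappa_1\|\nabla\phi\|$ and $\|\nabla^3 f\| \le \kappa_2 \|\nabla^2\phi\|$ (the norms as in (\ref{Eqn: Sup deriv norm})); in particular $\phi \in C^2$ gives $f \in C^3$, exactly the regularity used below.

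For step (ii) I would Taylor expand $\delta f = f(Y(M')) - f(Y(M))$ to third order about $Y(M)$, take the conditional expectation given $M$, and insert (\ref{Eqn: Drift term})--(\ref{Eqn: Remainder term}). The drift $-nY_n$ and the diagonal diffusion $2n^2\delta_{nm}$ reassemble precisely into $\A f(Y(M))$, as displayed in (\ref{Eqn: Observable evolution}), leaving the three remainder families together with the Taylor error $S_f$. Exchangeability of $(M,M')$ forces $\E[\delta f] = 0$, so taking the outer expectation gives the identity
\[
\E[\A f(Y)] = -\,\E\left[\sum_n R_n\,\partial_n f\right] - \tfrac12\,\E\left[\sum_{n,m}R_{nm}\,\partial_{nm}f\right] - \tfrac{1}{\alpha_N}\E[S_f].
\]
Bounding each term by factoring out the sup-norm of the relevant derivative of $f$, estimating the Lagrange remainder by $|S_f| \le \tfrac16\|\nabla^3 f\|\sum_{n,m,l}|\delta Y_n\delta Y_m\delta Y_l|$, and recognising the sums $\sum_n\E|R_n|$, $\sum_{n,m}\E|R_{nm}|$ and $\tfrac{1}{\alpha_N}\sum_{n,m,l}\E|\delta Y_n\delta Y_m\delta Y_l|$ as $\Rc^{(1)},\Rc^{(2)},\Rc^{(3)}$ respectively, I obtain $|\E[\A f(Y)]| \le \kappa_0\Rc^{(1)}\|\phi\| + \tfrac12\kappa_1\Rc^{(2)}\|\nabla\phi\| + \tfrac16\kappa_2\Rc^{(3)}\|\nabla^2\phi\|$ after substituting the smoothing bounds. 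Setting $c_1=\kappa_0$, $c_2=\tfrac12\kappa_1$, $c_3=\tfrac16\kappa_2$ recovers (\ref{Eqn: Quantitative bound}).

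The main obstacle is step (i): establishing the one-derivative smoothing estimates with the correct pairing, so that $\|\nabla^{j+1}f\|$ is controlled by $\|\nabla^{j}\phi\|$ while the $t$-integral stays convergent at its $t=0$ singularity. Everything else is bookkeeping, but this smoothing is precisely what lets the final bound be expressed through $\phi,\nabla\phi,\nabla^2\phi$ rather than one derivative higher, and it is the step that genuinely uses the Gaussian structure of the target law; it is carried out in Proposition \ref{Prop: Stein sol}.
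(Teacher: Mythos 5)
Your proposal is correct and follows essentially the same route as the paper's own proof in Appendix \ref{Sec: Stein solution}: the explicit OU-semigroup solution of Stein's equation (Proposition \ref{Prop: Stein sol}), the one-derivative smoothing bounds via Gaussian integration by parts (Lemma \ref{Lem: Function bounds}), and the third-order Taylor expansion combined with exchangeability so that $\E[\delta f]=0$ isolates $\E[\A f(Y)]$ against the remainders $\Rc^{(1)},\Rc^{(2)},\Rc^{(3)}$. Apart from immaterial numerical constants in the Taylor remainder, the two arguments coincide.
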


\begin{proof} Theorem \ref{Thm: Stein theorem} is a specific form of Theorem 3 in \cite{Meckes-2009}, except that we have decided to use the alternative quantities $\|\nabla^k \phi \|$ as bounds since these are easier to state and make minimal difference in the outcome of our main results. We have therefore decided to include the proof of Theorem \ref{Thm: Stein theorem} in Appendix \ref{Sec: Stein solution} for completeness and to aid the understanding of the interested reader, even though, beyond minimal adjustments, there is nothing new.
\end{proof}

\begin{remark}As was first noted by G\"{o}tze \cite{Gotze-1991} and Barbour \cite{Barbour-1990}, the operator $\A$ is the generator for a specific multi-dimensional Ornstein-Uhlenbeck (OU) process. Thus, in essence, Theorem \ref{Thm: Stein theorem} is stating that if the random walk is close (i.e. the remainders $R_n, R_{nm}$ etc and the constant $\alpha_N$ go to 0 in the limit of large $N$) to that of the associated OU-process then the corresponding stationary distributions will also be close - in the distributional sense of (\ref{Eqn: Quantitative bound}). This association is described in more detail in Section 4 of \cite{Joyner-2017}.
\end{remark}

\begin{remark}In principle one could remove the factor of $n$ present in (\ref{Eqn: FP eqn}) and achieve the same stationary distribution but it will transpire the evolution of our observables $Y_n(B)$, given in (\ref{Eqn: Centred Chebyshev}), can only be analysed if it is included. This is because this factor corresponds to rescaling the time $t \to nt$, which is independent of the random variable in question. Thus, in general, the linear statistic $\Phi_h(H)$ will not evolve according to a single one-dimensional OU process, but rather a linear combination of independent one-dimensional OU processes evolving at different rates.
\end{remark}

The novel aspect of our work concerns the evaluation of the remainders $R_n(H)$, $R_{nm}(H)$ and $R_{nml}(H)$. For comparison, the CLT results in \cite{Dobler-2012,Webb-2016,Lambert-2017}, whilst slight stronger, heavily utilise Dyson Brownian motion, which affords a closed form expression for the evolution of spectrum. In other words, the remainders are functions of the eigenvalues, i.e. $R_n(H) = R_n(\lambda_1(H),\ldots,\lambda_N(H))$ etc. However, since our ensembles are not invariant under, say unitary or orthogonal transformations, we do not have this luxury. We therefore use alternative combinatorial methods to obtain estimates in terms of the matrix dimension $N$.

The starting point of these methods comes from a generalised form of the Bartholdi identity, developed in \cite{Oren-2009} to obtain a trace formula for the eigenvalues of (magnetic) regular graphs. This allows us to relate the centred Chebyshev Polynomials $Y_n(H)$ to sums of products of matrix elements, like $H_{p_1p_2}H_{p_2p_3}\ldots$, associated to non-backtracking cycles (see Section \ref{Sec: Graph theoretical tools}). The change of such products under the appropriate random walks leads to remainder terms comprised of, again, certain classes of matrix products. Estimating the remainders consists of bounding the expectations of this quantities with respect to either the ITE and RITE. Here is where the combinatorial aspects arise, since, just as was first used by Wigner for showing convergence to the semi-circle distribution \cite{Wigner-1955,Wigner-1958}, one must evaluate the contributions arising from certain walks.

For the ITE the estimates are relatively straightforward because the matrix elements are independent. It means the contributions from many cycles are precisely zero. Those cycles that remain only give contributions tending to 0 in the large $N$ limit. For the RITE, however, a more complicated random walk leads, inevitably, to more complicated expressions for the remainder terms. Moreover, the lack of independence means the expectations of matrix products that were identically zero for the ITE are no longer so for the RITE. A key part of our analysis is therefore showing the correlations are small enough so the expectations go to zero sufficiently fast in $N$ (see Lemma \ref{Lem: Regular tournament expectation}). This is achieved by adapting McKay's methods \cite{McKay-1990} for the number of regular tournaments. Specifically, we transform the expectation of matrix products into a multi-dimensional integral, which are shown to be of a certain order in $1/N$.

\section{Graph theoretical tools}\label{Sec: Graph theoretical tools}

Before proceeding to our random walks we first introduce some necessary terminology and simple results. A graph $G$ consists of a set of vertices $V(G)$ and edges $E(G)$ connecting these vertices. $G$ is said to be \emph{simple} if every pair of vertices is connected by at most one edge and there are no vertices connected to themselves. $G$ is also said to be \emph{complete} if every pair of vertices has precisely one edge connecting them.

A \emph{walk} $\omega$ of length $n$ on a graph $G$ is an ordered sequence of vertices $\omega = (p_0,p_1,\ldots,p_{n-1},p_n)$ such that $p_{i+1} \neq p_i$ and all pairs $(p_i,p_{i+1}) \in E(G), i=0,\ldots,n-1$ are edges on the graph. If $p_{i+2} = p_i$ for some $i=0,\ldots,n-2$ then the walk is said to be \emph{backtracking}. Otherwise $\omega$ is \emph{non-backtracking}. A walk is also a \emph{cycle} (of length $n$) if the first and last vertices are the same, i.e. $p_0 = p_n$. Note that, in the present article, cycles will be distinguished by the starting vertex, so for example, $\omega = (1,2,3,4,1) \neq (2,3,4,1,2) = \omega'$. Again, the cycle is \emph{backtracking} if there exists some $i$ such that $p_i = p_{i+2(n)}$ and \emph{non-backtracking} otherwise.

We use the notations $V_{\omega}$ and $E_{\omega}$ to denote the set of distinct vertices and edges in a walk $\omega$ and $\nu_{\omega}(e)$ for the number of times the edge $e = (p,q)$ appears in $\omega$. Therefore, in terms of the tournament matrix $H$, a walk $\omega$ corresponds to the product over all matrix elements associated to (directed) edges in $\omega$, i.e
\begin{equation}\label{Eqn: Matrix walk definition}
H_{\omega} := H_{p_0p_1}H_{p_1p_2} \ldots H_{p_{n-1}p_n}.
\end{equation}
In addition, for a collection of walks $\omega_1,\omega_2,\ldots, \omega_n$ on $G$ we define
\begin{equation}\label{Eqn: Multiple walk vertices}
V_{\omega_1,\ldots,\omega_n} : = V_{\omega_1} \cup V_{\omega_2} \cup \ldots \cup V_{\omega_n}, \qquad E_{\omega_1,\ldots,\omega_n} : = E_{\omega_1} \cup E_{\omega_2} \cup \ldots \cup E_{\omega_n}
\end{equation}
and $\nu_{\omega_1,\ldots,\omega_n}(e)$ for the number of times the edge $e$ is traversed by the walks $\omega_1,\ldots,\omega_n$. Similarly
\begin{equation}\label{Eqn: Multiple walk matrices}
H_{\omega_1,\ldots,\omega_n} : = H_{\omega_1} H_{\omega_2}  \ldots H_{\omega_n}.
\end{equation}
If an edge $(p,q)$ appears an even number of times in $\omega_1,\ldots,\omega_n$ then it will be removed from $(\ref{Eqn: Multiple walk matrices})$ since we have identically $H_{pq}^2 = -H_{pq}H_{qp} = 1$ for every $H \in \TT_N$. It is therefore convenient to define the set of `free' edges as 
\[
F_{\omega_1,\ldots,\omega_n} := \{e \in E_{\omega_1,\ldots,\omega_n} : \nu_{\omega_1,\ldots,\omega_n}(e) \equiv 1 \mod(2)\},
\]
i.e. the set of edges that are traversed an odd number of times by $\omega_1,\ldots,\omega_n$. This will be especially useful when evaluating remainders for the RITE in Section \ref{Sec: RITE}.

We say that two walks $\omega$ and $\omega'$ are equivalent if $\omega'$ can be obtained from $\omega$ by simply relabelling the vertices and we will use the notation $\omega \sim \omega'$ to denote that is the case. For example $\omega = (1,2,3,1,4) \sim (2,3,9,2,6) = \omega'$. We will write $[\omega] := \{\omega' : \omega \sim \omega'\}$ to denote the associated equivalence class and if $\Omega$ is a set of walks then $[\Omega] := \{ [\omega] : \omega \in \Omega\}$ is the set of equivalence classes. Moreover, we shall use the notation $\omega \cong \omega'$ if $\omega \sim \omega'$ and $F_{\omega} = F_{\omega'}$. For example $\omega = (1,2,3,4,5,6,7,5,4,3,1) \cong (1,3,2,8,6,5,7,6,8,2,1) = \omega'$. The above notions immediately generalise to collections of walks $(\omega_1,\ldots,\omega_n)$.

\begin{lemma}\label{Eqn: Subgraph Betti relation}Let $G$ be a simple, connected graph with vertex set $V(G)$ and edge set $E(G)$. Let $G' \subseteq G$ be a subgraph with $V(G') \subseteq V(G)$ and $E(G') \subseteq E(G)$. Then 
\begin{equation}\label{Eqn: E-V inequality}
|E(G')| - |V(G')| \leq |E(G)| - |V(G)|,
\end{equation}
provided $|V(G')| \geq 1$.
\end{lemma}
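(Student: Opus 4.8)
The plan is to recast the inequality in terms of the first Betti number (cycle rank) of a graph, which is exactly the quantity the label of the lemma anticipates. For any finite simple graph $H$ write $c(H)$ for the number of its connected components and define its first Betti number by $\beta_1(H) := |E(H)| - |V(H)| + c(H)$. Recall that $\beta_1(H) \geq 0$, with equality precisely when $H$ is a forest, since $\beta_1(H)$ equals the dimension over $\mathbb{F}_2$ of the cycle space of $H$, i.e. the kernel of the incidence (boundary) map sending an edge subset to its vector of vertex-degrees mod $2$. Rearranging the definition gives $|E(H)| - |V(H)| = \beta_1(H) - c(H)$, so the claim is equivalent to $\beta_1(G') - c(G') \leq \beta_1(G) - c(G)$.

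The first key step is to establish monotonicity of the Betti number under passing to subgraphs, namely $\beta_1(G') \leq \beta_1(G)$. I would argue this at the level of cycle spaces: any edge subset of $G'$ in which every vertex has even degree is, since $E(G') \subseteq E(G)$, also such a subset for $G$, so the inclusion of edge sets restricts to an injective linear map from the cycle space of $G'$ into that of $G$; comparing dimensions yields the inequality. (Equivalently one may fix a spanning forest of $G'$, extend it to a spanning forest of $G$, and count the remaining edges.) The second ingredient is purely about component counts: because $G$ is connected we have $c(G) = 1$, while the hypothesis $|V(G')| \geq 1$ guarantees that $G'$ has at least one component, so $c(G') \geq 1$.

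Combining these gives the result immediately, since
\[
|E(G')| - |V(G')| = \beta_1(G') - c(G') \leq \beta_1(G) - 1 = |E(G)| - |V(G)|,
\]
where the inequality uses $\beta_1(G') \leq \beta_1(G)$ together with $-c(G') \leq -1$. I expect the only genuine content --- and therefore the main thing to get right --- to be the monotonicity $\beta_1(G') \leq \beta_1(G)$; everything else is bookkeeping, and it is precisely here (through $c(G)=1$) and in the bound $c(G') \geq 1$ that the connectedness of $G$ and the nonemptiness of $G'$ are used, without which the statement is false (e.g. two isolated vertices versus one).

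If one prefers a self-contained argument that avoids cycle spaces, I would instead induct on $|V(G) \setminus V(G')| + |E(G) \setminus E(G')|$, the number of missing vertices and edges. If some edge of $G$ has both endpoints in $V(G')$ but is absent from $E(G')$, adjoin it: this keeps $G'$ a subgraph of $G$ and raises $|E| - |V|$ by one. Otherwise $G'$ is the induced subgraph on $V(G')$, and since it is a proper subgraph with $G$ connected there must be an edge of $G$ joining $V(G')$ to a vertex $w \notin V(G')$; adjoining that edge together with $w$ leaves $|E| - |V|$ unchanged. In either case we move to a strictly larger subgraph of $G$ along which $|E| - |V|$ does not decrease, and the induction terminates at $G$ itself.
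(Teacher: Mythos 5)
Your proof is correct, and its main line is recognisably the same Betti-number bookkeeping as the paper's, but you execute the key step differently. The paper first connects $G'$ inside $G$ by adding $C-1$ edges to form an auxiliary graph $\tilde{G}$, writes $|E(G')|-|V(G')| = \beta(\tilde{G}) - C$, and then asserts without further argument that a subgraph of $G$ cannot have more fundamental cycles than $G$. You instead work directly with the cyclomatic number $\beta_1(H)=|E(H)|-|V(H)|+c(H)$ of the possibly disconnected $G'$ and prove the monotonicity $\beta_1(G')\leq\beta_1(G)$ honestly, via the injection of $\mathbb{F}_2$-cycle spaces (or spanning-forest extension). This buys you two things: the monotonicity the paper leaves implicit is actually justified, and you avoid the paper's connectification step, which is delicate as stated --- joining two components of $G'$ inside $G$ may require adding vertices of $G$ as well as edges (take $G$ the path $a$--$b$--$c$ and $G'$ the two isolated vertices $a$ and $c$: no $\tilde{G}\subseteq G$ obtained from $G'$ by adding a single edge is connected), so your argument is in fact tighter than the one in the paper. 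Your fallback induction on $|V(G)\setminus V(G')| + |E(G)\setminus E(G')|$ is a genuinely different, fully elementary route that dispenses with Betti numbers altogether; note only that its induced-subgraph case uses $|V(G')|\geq 1$ implicitly (a boundary edge to some $w\notin V(G')$ exists only when $V(G')\neq\emptyset$), which would be worth flagging, but both operations preserve the inequality and the induction terminates at $G$, so that route is sound as well.
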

\begin{proof}Let $C$ denote the number of connected components of $G'$. We can create a new graph $\tilde{G} \subseteq G$ by adding a minimal number of edges to $G'$ such that $\tilde{G}$ is connected, then
\[
|E(G')| - |V(G')| = |E(\tilde{G})| - |V(\tilde{G})| - C + 1 = \beta(\tilde{G}) - C.
\]
Here $\beta(\tilde{G}) = |E(\tilde{G})| - |V(\tilde{G})| + 1$ is the first Betti number of $\tilde{G}$, which counts the number of fundamental cycles. However, since $\tilde{G}$ is a subgraph of $G$ it cannot have more fundamental cycles than $G$ and so
\[
|E(G')| - |V(G')| \leq |E(G)| - |V(G)| + (1-C).
\]
The condition $|V(G')| \geq 1$ ensures that $C \geq 1$, which completes the result.
\end{proof}

\begin{corollary}\label{Cor: RITE vertex edge identity}Let $\bar{\omega} = (\omega_1,\ldots,\omega_n)$ be a collection of walks and define the subgraph $G = (V_{\bar{\omega}},F_{\bar{\omega}})$. If $G$ is disconnected with $C$ components then we write $G_i = (V^{(i)}_{\bar{\omega}},F^{(i)}_{\bar{\omega}}), i=1,\ldots,C$ to denote the subgraphs of these components and $\beta_i = |F^{(i)}_{\bar{\omega}}| -  |V^{(i)}_{\bar{\omega}}| + 1$ the associated first Betti numbers. Suppose $\bar{\omega} \sim \bar{\omega}'$ then
\[
|V_{\bar{\omega},\bar{\omega}'}| - \frac{|F_{\bar{\omega},\bar{\omega}'}|}{2} \leq |V_{\bar{\omega}}| + \sum_{i=1}^C \delta_{\beta_i,0}.
\]
\end{corollary}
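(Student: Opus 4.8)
The plan is to pass from the two correlated collections $\bar{\omega}$ and $\bar{\omega}'$ to the single merged collection $\hat{\omega} = (\omega_1,\ldots,\omega_n,\omega_1',\ldots,\omega_n')$, for which $V_{\hat{\omega}} = V_{\bar{\omega},\bar{\omega}'}$ and $F_{\hat{\omega}} = F_{\bar{\omega},\bar{\omega}'}$, and then rewrite the left-hand side entirely in terms of intersection data. The first thing I would record is that, because parities of traversal counts add, an edge is free for $\hat{\omega}$ exactly when it is free for precisely one of $\bar{\omega}$, $\bar{\omega}'$; hence $F_{\bar{\omega},\bar{\omega}'} = F_{\bar{\omega}}\,\triangle\,F_{\bar{\omega}'}$ (symmetric difference over the edge universe of the complete graph). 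Writing $V_\cap := V_{\bar{\omega}}\cap V_{\bar{\omega}'}$ and $F_\cap := F_{\bar{\omega}}\cap F_{\bar{\omega}'}$, inclusion--exclusion combined with the fact that $\bar{\omega}\sim\bar{\omega}'$ (so $|V_{\bar{\omega}'}| = |V_{\bar{\omega}}|$ and $|F_{\bar{\omega}'}| = |F_{\bar{\omega}}|$) gives
\[
|V_{\bar{\omega},\bar{\omega}'}| = 2|V_{\bar{\omega}}| - |V_\cap|, \qquad \tfrac{1}{2}|F_{\bar{\omega},\bar{\omega}'}| = |F_{\bar{\omega}}| - |F_\cap|.
\]

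Substituting these into the claimed inequality and cancelling one copy of $|V_{\bar{\omega}}|$, the corollary becomes equivalent to
\[
|F_\cap| - |V_\cap| \;\le\; \big(|F_{\bar{\omega}}| - |V_{\bar{\omega}}|\big) + \sum_{i=1}^C \delta_{\beta_i,0}.
\]
Here I would use the definition $\beta_i = |F^{(i)}_{\bar{\omega}}| - |V^{(i)}_{\bar{\omega}}| + 1$ summed over the $C$ components to replace $|F_{\bar{\omega}}| - |V_{\bar{\omega}}| = \sum_i\beta_i - C$, so the target reads $|F_\cap| - |V_\cap| \le \sum_i \beta_i - b$, where $b := C - \sum_i \delta_{\beta_i,0}$ is the number of components of $G=(V_{\bar{\omega}},F_{\bar{\omega}})$ that contain a cycle (those with $\beta_i\ge 1$).

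The crux is to bound $|F_\cap|-|V_\cap|$, which is precisely $|E|-|V|$ for the intersection graph $G_\cap := (V_\cap, F_\cap)$; note every edge of $F_\cap$ has both endpoints in $V_\cap$, so $G_\cap$ is a genuine simple subgraph of $G$. Since $G_\cap \subseteq G$, each connected component of $G_\cap$ lies inside a single component $G_i$ of $G$, so I would split $G_\cap$ into its parts $G_\cap^{(i)} := G_\cap \cap G_i$ and apply Lemma~\ref{Eqn: Subgraph Betti relation} to each nonempty part $G_\cap^{(i)} \subseteq G_i$ (each $G_i$ being simple and connected), obtaining $|F(G_\cap^{(i)})| - |V(G_\cap^{(i)})| \le |F(G_i)| - |V(G_i)| = \beta_i - 1$; empty parts contribute $0$. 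Summing over $i$ then yields $|F_\cap| - |V_\cap| \le \sum_{i\in S}(\beta_i - 1)$, where $S$ indexes the components actually met by $G_\cap$.

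It finally remains to check the purely numerical inequality $\sum_{i\in S}(\beta_i - 1) \le \sum_i \beta_i - b$: discarding the nonpositive contributions from tree components $i\in S$ with $\beta_i = 0$, and then enlarging the index set to all cyclic components, gives $\sum_{i\in S}(\beta_i-1) \le \sum_{i:\beta_i\ge1}(\beta_i-1) = \sum_i\beta_i - b$, which closes the argument. I expect the only genuine obstacle to be the bookkeeping around the $\delta_{\beta_i,0}$ term: one must verify that $G_\cap$ is a legitimate subgraph of each component so that Lemma~\ref{Eqn: Subgraph Betti relation} applies cleanly, and confirm that tree components (where the Betti bound saves nothing and in fact returns $\beta_i-1=-1$) are accounted for correctly. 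The remaining steps are inclusion--exclusion and a short counting estimate.
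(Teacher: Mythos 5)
Your proposal is correct and takes essentially the same route as the paper: the identical inclusion--exclusion identities reduce the claim to bounding $|F_{\bar{\omega}}\cap F_{\bar{\omega}'}| - |V_{\bar{\omega}}\cap V_{\bar{\omega}'}|$, which is then controlled by applying Lemma~\ref{Eqn: Subgraph Betti relation} to the intersection subgraph $G' = (V_{\bar{\omega}}\cap V_{\bar{\omega}'}, F_{\bar{\omega}}\cap F_{\bar{\omega}'}) \subseteq G$. The only difference is one of completeness in your favour: where the paper treats the connected case, handles $|V_{\bar{\omega}}\cap V_{\bar{\omega}'}|=0$ separately (which is where $\delta_{\beta,0}$ enters), and then simply asserts that ``extending this to $C$ connected components completes the result,'' you carry out that extension explicitly, via the component-by-component application of the lemma and the counting inequality $\sum_{i\in S}(\beta_i-1)\leq \sum_i \beta_i - b$.
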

\begin{proof}By construction we have 
\begin{align}
|V_{\bar{\omega},\bar{\omega}'}| & = |V_{\bar{\omega}}| + |V_{\bar{\omega}'}| - |V_{\bar{\omega}}\cap V_{\bar{\omega}'}| = 2 |V_{\bar{\omega}}| - |V_{\bar{\omega}}\cap V_{\bar{\omega}'}| \\
|F_{\bar{\omega},\bar{\omega}'}| & = |F_{\bar{\omega}}| + |F_{\bar{\omega}'}| - 2|F_{\bar{\omega}}\cap F_{\bar{\omega}'}| = 2 |F_{\bar{\omega}}| - 2|F_{\bar{\omega}}\cap F_{\bar{\omega}'}|
\end{align}
and so
\[
|V_{\bar{\omega},\bar{\omega}'}| - \frac{|F_{\bar{\omega},\bar{\omega}'}|}{2} = 2 |V_{\bar{\omega}}| - |F_{\bar{\omega}}| + |F_{\bar{\omega}}\cap F_{\bar{\omega}'}| - |V_{\bar{\omega}}\cap V_{\bar{\omega}'}|.
\]
Now, the graph $G' = (V_{\bar{\omega}}\cap V_{\bar{\omega}'}, F_{\bar{\omega}}\cap F_{\bar{\omega}'}) \subseteq G$. Let us suppose $G$ is connected (i.e. $C=1$) and $|V_{\bar{\omega}}\cap V_{\bar{\omega}'}| \geq 1$, then by Lemma \ref{Eqn: Subgraph Betti relation} we have 
\[
|V_{\bar{\omega},\bar{\omega}'}| - \frac{|F_{\bar{\omega},\bar{\omega}'}|}{2} \leq |V_{\bar{\omega}}|.
\]
It thus remains to check the case when $|V_{\bar{\omega}}\cap V_{\bar{\omega}'}| =0$. In this case $|F_{\bar{\omega}}\cap F_{\bar{\omega}'}|=0$ and so
\[
|V_{\bar{\omega},\bar{\omega}'}| - \frac{|F_{\bar{\omega},\bar{\omega}'}|}{2} = 2|V_{\bar{\omega}}| - |F_{\bar{\omega}}| = |V_{\bar{\omega}}| - 1 + \beta(G) \leq |V_{\bar{\omega}}| + \delta_{\beta,0}(G).
\]
Extending this to $C$ connected components completes the result.
\end{proof}

For our imaginary tournament matrices there is an intimate connection between the traces of Chebyshev polynomials (see Equation (\ref{Eqn: Chebyshev traces})) and the sets of non-backtracking cycles. This is given by the following lemma.

\begin{lemma}\label{Lem: Non-backtracking}Let $M$ be an $N \times N$ self-adjoint matrix with elements of the form
\begin{equation}
M_{pq} = e^{i\phi_{pq}} = \overline{M}_{qp}, \hspace{10pt} \phi_{pq} \in [0,2\pi), \forall q \neq p
\end{equation}
and $M_{pp} = 0$ for all $p$. Then

\begin{equation}\label{Eqn: Chebyshev trace}
\Tr\left[T_n\left(\frac{M}{2\sqrt{N-2}}\right)\right] = \frac{1}{2} \frac{1}{(N-2)^{\frac{n}{2}}} \left[\sum_{\omega \in \Omega_n} M_\omega  - \frac{1}{2}(N-3)(1 + (-1)^n)\right],
\end{equation}
where $\Omega_{n}$ denotes the set of non-backtracking cycles of length $2n$ and $M_{\omega}$ is given in (\ref{Eqn: Matrix walk definition}).
\end{lemma}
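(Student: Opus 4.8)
The plan is to realise the weighted cycle sum $\sum_{\omega\in\Omega_n}M_\omega$ as the trace of a power of the (weighted) non-backtracking edge operator and then transfer the computation to the vertex space, where Chebyshev polynomials appear. I would work on the complete graph $K_N$ and introduce the weighted Hashimoto operator $\mathcal{B}$ acting on the $N(N-1)$ directed edges by
\[
\mathcal{B}_{(p,q),(r,s)} = M_{rs}\,\delta_{qr}(1-\delta_{ps}),
\]
so that $\mathcal{B}$ composes one edge with the next while forbidding immediate reversal. Since closing up a path of $n$ edge-steps automatically enforces non-backtracking at the seam as well, the diagonal of $\mathcal{B}^n$ sums exactly the weights $M_\omega$ over cyclically non-backtracking closed walks, giving $\Tr(\mathcal{B}^n) = \sum_{\omega\in\Omega_n}M_\omega$ (the trace is naturally rooted at a starting edge, matching the rooting of $\Omega_n$ by its starting vertex).

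Next I would invoke the generalised Bartholdi identity of \cite{Oren-2009}, specialised to $K_N$ with unimodular weights. Writing $q:=N-2$, $|V|=N$, $|E|=\binom{N}{2}$, and using that every vertex has degree $N-1$ with $M_{pq}M_{qp}=|M_{pq}|^2=1$, the relevant determinantal factorisation reads
\[
\det(I - t\mathcal{B}) = (1-t^2)^{|E|-|V|}\,\det\!\left(I - tM + q t^2 I\right).
\]
Applying $-t\,\frac{d}{dt}\log(\cdot)$ turns the left-hand side into $\sum_{n\ge1}\Tr(\mathcal{B}^n)\,t^n$. The topological prefactor contributes $(|E|-|V|)\,\frac{2t^2}{1-t^2} = (|E|-|V|)\sum_{n\ge1}(1+(-1)^n)t^n$, while the vertex determinant contributes $\sum_\mu \big(t\lambda_\mu - 2q t^2\big)/\big(1 - t\lambda_\mu + q t^2\big)$, summed over the eigenvalues $\lambda_\mu$ of $M$.

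The Chebyshev polynomials then enter through the Poisson kernel. Writing $\lambda_\mu = 2\sqrt{q}\cos\theta_\mu$ gives $1 - t\lambda_\mu + q t^2 = 1 - 2\sqrt{q}\,t\cos\theta_\mu + q t^2$, and the rearrangement $t\lambda - 2q t^2 = -(1 - t\lambda + q t^2) + (1-q t^2)$ combined with $\frac{1-q t^2}{1-2\sqrt{q}t\cos\theta+q t^2} = 1 + 2\sum_{n\ge1}q^{n/2}T_n(\cos\theta)t^n$ collapses each summand to $2\sum_{n\ge1}q^{n/2}T_n(\cos\theta_\mu)t^n$. Summing over $\mu$ and recognising $\sum_\mu T_n(\cos\theta_\mu) = \Tr[T_n(M/(2\sqrt{N-2}))]$, I would match the coefficient of $t^n$ on both sides to obtain
\[
\sum_{\omega\in\Omega_n} M_\omega = 2(N-2)^{n/2}\,\Tr\!\left[T_n\!\left(\tfrac{M}{2\sqrt{N-2}}\right)\right] + (|E|-|V|)\big(1+(-1)^n\big),
\]
and rearranging yields the stated identity with the constant governed by the cycle rank $|E|-|V| = \binom{N}{2}-N$.

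The main obstacle is establishing and justifying the weighted Bartholdi factorisation for the complex unimodular weights on $K_N$; I would do this either by citing the magnetic/weighted version in \cite{Oren-2009} or by re-running the standard edge-space block-determinant argument, where the hypothesis $M_{pq}M_{qp}=1$ is exactly what is needed. A useful consistency check and the combinatorial source of the coefficient $q=N-2$ is the vertex-space three-term recursion for the internally non-backtracking walk matrices, $B_{n+1}=MB_n-(N-2)B_{n-1}$ with $B_0=I$, $B_1=M$, $B_2=M^2-(N-1)I$ (the $N-2$ counts the admissible intermediate vertices left after removing a backtracking extension), which yields $\sum_n B_n t^n = (1-t^2)(I-tM+q t^2 I)^{-1}$ and mirrors the numerator $(1-t^2)$ above. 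Everything after the determinant identity is a formal power-series manipulation, and it can be pinned down at $n=2$: there $\Omega_2=\varnothing$, while $\Tr(M^2)=N(N-1)$ gives $\Tr[T_2(M/(2\sqrt{N-2}))]$ explicitly, fixing the value of the constant as $|E|-|V|$ and confirming the $(1+(-1)^n)$ parity dependence.
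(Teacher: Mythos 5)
Your proposal is correct, and it is essentially the route the paper itself gestures at: the paper's ``proof'' of this lemma is only a pointer to two methods --- a generalised Bartholdi/Ihara--Bass identity in the edge space \cite{Oren-2009,Joyner-2017}, and the observation that non-backtracking walk polynomials satisfy the Chebyshev three-term recursion \cite{Sodin-2017} --- and you carry the first out in full (weighted Hashimoto operator, determinant factorisation hinging exactly on $M_{pq}M_{qp}=1$, logarithmic derivative, Poisson-kernel generating function), while invoking the second, with the correct recursion $B_{n+1}=MB_n-(N-2)B_{n-1}$, as a cross-check; your identification of $\Tr(\mathcal{B}^n)$ with the vertex-rooted cyclically non-backtracking sum $\sum_{\omega\in\Omega_n}M_\omega$ is also right. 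One point worth flagging: your derivation pins the constant at $(|E|-|V|)\bigl(1+(-1)^n\bigr)$ with $|E|-|V|=\binom{N}{2}-N=N(N-3)/2$, which differs by a factor of $N$ from the printed $\tfrac{1}{2}(N-3)\bigl(1+(-1)^n\bigr)$, and your value is the correct one, as your own $n=2$ check confirms: $\Omega_2=\varnothing$ while $\Tr(M^2)=N(N-1)$ forces $\Tr\bigl[T_2\bigl(M/(2\sqrt{N-2})\bigr)\bigr]=-N(N-3)/\bigl(2(N-2)\bigr)$, so the lemma as stated contains a typo in the constant (and, similarly, ``length $2n$'' in the statement should read ``length $n$'' for $\Omega_n$, as the paper's later use with $T_{2n}$ and $\Lambda_{2n}$ shows). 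Neither slip propagates, since the paper only ever uses the centred traces $Y_n(H)$, in which this constant cancels, but you should state explicitly that your computation corrects the printed constant rather than claiming it ``yields the stated identity'' verbatim.
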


\begin{proof}
We are aware of two related methods for proving the validity of this statement that we shall not recount here. The first approach is to make a generalisation of the so-called Bartholdi identity (see e.g. \cite{Oren-2009,Joyner-2017}) that relates the spectrum of $M$ to another matrix associated to non-backtracking walks in the edge space. This connection is applicable since $M$ can be considered as a magnetic adjacency matrix of a complete graph on $N$ vertices. The second approach is based upon showing that polynomials associated to non-backtracking walks obey the same recursion relations as the Chebyshev polynomials (see e.g. \cite{Sodin-2017} and references therein).
\end{proof}

\section{Imaginary tournament ensemble}\label{Sec: ITE}

We now construct the random walk process in $\TT_N$. Many of the intricate details of this walk are discussed in \cite{Joyner-2015} and so we attempt to keep to the essential points. Suppose that at time $t \in \N$ we select a matrix $H \in \TT_N$, then at time $t+1$ we randomly choose another matrix $H' \in \TT_N$ by selecting with equal probability one of the upper triangular elements of $H$ (say $H_{pq}$ with $p <q$) and, together with its symmetric partner (i.e. $H_{qp}$), we change its sign $H_{pq} \to -H_{pq}$. We will write
\begin{equation}\label{Eqn: Matrix change ITE}
\delta H^{pq} := H' - H = -2H_{pq}[\e_p\e_q^T - \e_q\e_p^T],
\end{equation}
to denote the $N \times N$ rank 2 difference matrix obtained as a result of performing this change of sign. Here $\e_p$ is the column vector with a 1 in entry $p$ and 0 everywhere else. This switch corresponds to changing the direction of an edge (see Figure \ref{Fig: RW for ITE}) in the associated tournament graph, as described in Section \ref{Sec: Definitions and results}. 

\begin{figure}[ht]
\centerline{(a)\includegraphics[width=0.28\textwidth]{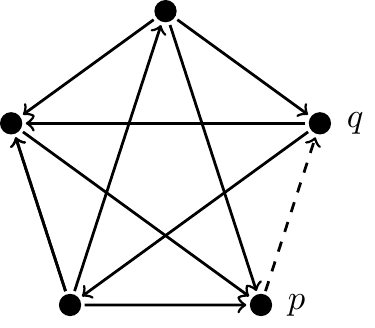}
\hspace{70pt}
(b)\includegraphics[width=0.28\textwidth]{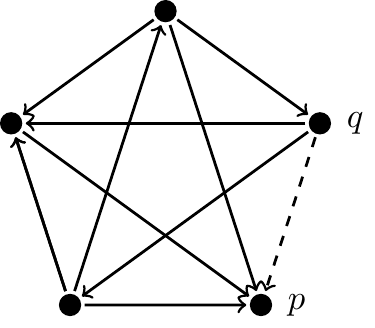}}
\caption{The Markov process consists of choosing an edge $(p,q)$ uniformly at random in the tournament graph (a) and then switching the orientation to obtain the tournament graph in (b). In this example the $(p,q)$-th element of the associated adjacency matrix $A_{pq} = 1 - A_{qp} = 1$ in (a) is updated to $A_{pq} = 0$ in (b). Hence $H_{pq} = i(2A - (\mathbf{E}_N-\mathbf{I}_N))_{pq} = -H_{qp} = i \mapsto H_{pq} = -i$ when making the switch from (a) to (b).}
\label{Fig: RW for ITE}
\end{figure}

Interpreting this in terms of a random walk we say that if the walker is at $H$ at time $t$ then in each unit time step we let the walker move to any matrix $H' \in \TT_N$ which is exactly a Hamming distance\footnote{The Hamming distance between two matrices $H,H' \in \TT_N$ is given by $|H - H'| = \frac{1}{2}\sum_{p<q}|H'_{pq} - H_{pq}|$, which counts the number of differences in signs of the free matrix elements.} one away with equal probability - giving us the transition probability
\begin{equation}\label{Eqn: RTE motion}
\rho(H \to H') = \left\{\begin{array}{ll}
\frac{1}{d_N} & |H -H'| = 1 \\
0 & |H -H'| \neq 1, \end{array} \right.
\end{equation}
where $d_N = N(N-1)/2$ is the number of independent elements of $H$. Therefore, if $P_t(H)$ is the probability for the random walker to be at matrix $H$ at time $t$ then the probability to be at some other matrix $H' \in \TT_N$ is given by
\[
P_{t+1}(H') = \sum_{H \in \TT_N} \rho(H \to H')P_t(H) = \sum_{H : |H' - H| = 1} \frac{P_t(H)}{d_N} = \frac{1}{d_N} \sum_{p < q} P_t(H' - \delta H^{(pq)}).
\]
One may then verify easily that $P_t(H') = |\TT_N|^{-1}$ (the measure of the ITE in Definition \ref{Dfn: ITE}) is the stationary distribution of this process, since $\#\{H : |H' - H| = 1\} = d_N$. In this instance the random matrices $H$ and $H'$ have the same distribution and are thus an exchangeable pair.

The expected change of some observable $f(H)$ with respect to this random walk is hence given by
\begin{equation}\label{Eqn: Observable change ITE}
\E[\delta f|H] := \sum_{H' \in \TT_N} \rho(H \to H')[f(H') - f(H)] = \frac{1}{d_N}\sum_{p<q}[f(H+\delta H^{pq}) - f(H)].
\end{equation}
Similarly, higher moments are obtained by taking the expectation of products of changes, i.e. for $f_1(H),f_2(H),\ldots,f_k(H)$
\begin{equation}\label{Eqn: Multiple observables change ITE}
\E[\delta f_1 \ldots \delta f_k | H] := \frac{1}{d_N}\sum_{p < q} [f_1(H + \delta H^{pq}) - f_1(H)] \ldots [f_k(H + \delta H^{pq}) - f_k(H)].
\end{equation}
We are now in position to state how the observables $Y_n(H)$, given in (\ref{Eqn: Centred Chebyshev}), behave under this random walk.

\begin{proposition}\label{Prop: ITE evolution}Let $(H',H)$ be an exchangeable pair from the ITE with distribution $P(H) = |\TT_N|^{-1}$ and connected via (\ref{Eqn: RTE motion}). Let $Y_n(H)$ be as defined in (\ref{Eqn: Centred Chebyshev}). Then
\begin{enumerate}
\item \label{Eqn: ITE drift} $\frac{d_N}{4}\E[\delta Y_n|H] = -nY_n(H) + R_n(H)$ (Drift term)
\item \label{Eqn: ITE diffusion}$\frac{d_N}{4}\E[\delta Y_n\delta Y_m|H] = 2n^2\delta_{nm} + R_{nm}(H)$ (Diffusion term)
\item \label{Eqn: ITE remainder}$\frac{d_N}{4}\E[|\delta Y_n\delta Y_m\delta Y_l | |H] = R_{nml}(H)$ (Remainder term)
\end{enumerate}
with $\E|R_n(H)| = \O(N^{-1})$, $\E|R_{nm}(H)| = \O(N^{-1})$ and $\E|R_{nml}(H)| = \O(N^{-1})$ for all $n,m,l = 2,\ldots,k$.
\end{proposition}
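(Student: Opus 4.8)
The plan is to turn everything into sums over non-backtracking cycles and then watch how these sums respond to a single edge-flip. By Lemma~\ref{Lem: Non-backtracking} (applied at Chebyshev index $2n$, the discrepancy between the scaling $\sqrt{4N}$ and $2\sqrt{N-2}$ contributing only lower-order corrections) I would write $Y_n(H)=C_N\sum_{\omega}H_\omega-\E[\,\cdot\,]$ with $C_N=\O(N^{-n})$, the sum running over the pertinent non-backtracking cycles. A first reduction is that any cycle with $F_\omega=\emptyset$ carries a deterministic $H_\omega$ equal to its own mean, so it cancels against the centering in (\ref{Eqn: Centred Chebyshev}); thus $Y_n$ is effectively supported on cycles with $F_\omega\neq\emptyset$. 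The engine of the whole computation is the effect of the flip $H_{pq}\to -H_{pq}$ in (\ref{Eqn: Matrix change ITE}): each traversal of the edge $\{p,q\}$ picks up a factor $-1$, so $H_\omega\mapsto(-1)^{\nu_\omega(\{p,q\})}H_\omega$ and $H_\omega$ changes sign precisely when $\{p,q\}\in F_\omega$. This yields the compact formula $\delta Y_n=-2C_N\sum_{\omega:\,\{p,q\}\in F_\omega}H_\omega$.

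For the drift I would substitute this into (\ref{Eqn: Observable change ITE}) and exchange the order of summation, so each cycle is weighted by its number of free edges, giving $\tfrac{d_N}{4}\E[\delta Y_n|H]=-\tfrac{C_N}{2}\sum_\omega|F_\omega|\,H_\omega$. The key point is that a cycle whose $2n$ edge-traversals hit distinct edges has $|F_\omega|=2n$, so the weight $n-|F_\omega|/2$ vanishes and such cycles reproduce exactly $-nY_n(H)$; the remainder $R_n(H)=C_N\sum_\omega(n-|F_\omega|/2)H_\omega$ is therefore supported only on cycles possessing a repeated edge. The diffusion term is treated identically through (\ref{Eqn: Multiple observables change ITE}): up to normalisation the double sum produces $\sum_{\omega,\sigma}|F_\omega\cap F_\sigma|\,H_\omega H_\sigma$, whose leading part comes from pairing a simple cycle with itself or its reversal (forcing $n=m$ and $|F_\omega\cap F_\sigma|=2n$) and yields the claimed $2n^2\delta_{nm}$, while the residual pairings constitute $R_{nm}$. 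The triple term (\ref{Eqn: ITE remainder}) has no main part and is bounded directly.

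To estimate the remainders I would use that each $R_{n_1\dots n_j}(H)$ has mean zero under the ITE — since by independence $\E[H_\omega]=0$ unless $F_\omega=\emptyset$ — and pass to a second moment via $\E|R_n|\le(\E R_n^2)^{1/2}$. Expanding $\E R_n^2$ gives a sum over pairs $(\omega,\omega')$ of $\E[H_\omega\overline{H_{\omega'}}]$, and independence forces this to vanish unless every edge is traversed an even number of times overall, i.e. $F_{\omega,\omega'}=\emptyset$; the surviving pairs are then counted by the number of distinct vertices they occupy, of order $N^{|V_{\omega,\omega'}|}$. Here Lemma~\ref{Eqn: Subgraph Betti relation} and Corollary~\ref{Cor: RITE vertex edge identity} are the essential tools, bounding $|V_{\omega,\omega'}|$ in terms of $|V_\omega|$ and the Betti numbers of the (necessarily even) free subgraph $F_\omega$.

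The step I expect to be the main obstacle is extracting the sharp vertex deficit that upgrades a naive estimate to the stated $\O(N^{-1})$. The decisive observation is that a repeated edge in a non-backtracking cycle can never be pendant — traversing a leaf edge twice would backtrack — so it must join two already-visited vertices; consequently an edge-repeating cycle of length $2n$ has at most $2n-2$ distinct vertices, a deficit of \emph{two} below a simple cycle rather than one. Combined with $C_N=\O(N^{-n})$ this gives $\E R_n^2=\O(N^{-2})$ and hence $\E|R_n|=\O(N^{-1})$, and the same bookkeeping controls $R_{nm}$ and $R_{nml}$. The delicate part is organising the cycles into classes under the relabelling equivalence $\cong$ of Section~\ref{Sec: Graph theoretical tools} so the deficit is tracked uniformly in $N$, and checking that no class of surviving pairs secretly attains the full vertex count — this is exactly where the even-graph structure of $F_\omega$ and the Betti-number bounds of Corollary~\ref{Cor: RITE vertex edge identity} do the real work.
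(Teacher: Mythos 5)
Your proposal takes essentially the same route as the paper's proof: the non-backtracking cycle expansion of $Y_n$ via Lemma~\ref{Lem: Non-backtracking} (with the centering killed by independence of the entries), the sign-flip identity $\delta Y_n^{pq}=-2C_N\sum_{\omega\,:\,(p,q)\in F_{\omega}}H_{\omega}$, extraction of the main terms from cycles with $|F_{\omega}|=2n$ --- the paper's $\Lambda^{\star}_{2n}$, where for the diffusion constant one must pair each simple cycle with all $4n$ rotations and orientations sharing its edge set (your ``itself or its reversal'' is loose on this count, but $2n^2$ is indeed what the $4n$ pairings produce) --- and Cauchy--Schwarz over relabelling classes with the deficit-two vertex bound $|V_{\omega,\omega'}|\leq 2n-2$ for repeated-edge cycles, which is exactly the paper's mechanism for the $\O(N^{-1})$ rate. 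The one substantive slip is inessential: $R_{nml}$ is not mean zero (it is non-negative by definition), and the paper instead bounds it by fixing the flipped edge $(p,q)$, applying Cauchy--Schwarz to the inner cycle sum with the two endpoints $p,q$ pinned to gain $\O(N^{-3})$, and summing over the $\O(N^{2})$ edges --- but since Cauchy--Schwarz never required mean zero, your estimate goes through unchanged.
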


\begin{proof}
The proofs for Parts \ref{Eqn: ITE drift}, \ref{Eqn: ITE diffusion} and \ref{Eqn: ITE remainder} will be presented in Sections \ref{Sec: ITE Drift term}, \ref{Sec: ITE Diffusion term} and \ref{Sec: ITE Remainder term} respectively.
\end{proof}

To show Theorem \ref{Prop: ITE evolution} we utilise Lemma \ref{Lem: Non-backtracking}, which allows us to express $Y_n(H)$ in the following form
\begin{equation}\label{Eqn: Y formula ITE}
Y_n(H) = \Tr[T_{2n}(H)] - \E[\Tr[T_{2n}(H)]] = \frac{1}{2}\frac{1}{(N-2)^n}\sum_{\omega \in \Lambda_{2n}} H_{\omega}.
\end{equation}
Here $\Lambda_{2n} := \{\omega \in \Omega_{2n} : \exists e  \in E_{\omega} \ \text{s.t.} \ \nu_{\omega}(e) = 1 \mod (2) \}$ is the set of non-backtracking cycles $\omega \in \Omega_{2n}$ for which there is at least one edge that is traversed an odd number of times. Hence, since the matrix elements are independent, $\E[H_{\omega}] = 0$ for all $\omega \in \Lambda_{2n}$, which is why this term disappears in (\ref{Eqn: Y formula ITE}).

From (\ref{Eqn: Matrix change ITE}) for some edge $e = (p',q')$
\begin{equation}\label{Eqn: ITE matrix element change}
\delta H^{pq}_e := \e_{p'}^T(\delta H^{pq})\e_{q'} = -2H_{pq}(\delta_{pp'}\delta_{qq'} + \delta_{qp'}\delta_{pq'}) = -2H_e \chi_{e,pq}.
\end{equation}
Here $\chi_{e,pq}$ is the indicator, equal to 1 if $e = (p,q)$ or $(q,p)$ and 0 otherwise. Therefore, if $\omega = (p_0,\ldots,p_{2n-1},p_0)$ then
\[
\delta H^{pq}_\omega := \prod_{i = 0}^{2n-1} (H + \delta H^{pq})_{p_i p_{i+1(2n)}} - H_{\omega} = H_{\omega} \prod_{i = 0}^{2n-1} (1 - 2 \chi_{p_i p_{i+1(2n)},pq}) - H_{\omega} = -2H_\omega \phi_{\omega,pq}
\]
where
\[
\phi_{\omega,pq} = \frac{1}{2}((-1)^{\chi_{\omega,pq}} - 1), \qquad \chi_{\omega,pq} = \sum_{i=0}^{2n-1} \chi_{p_i p_{i+1(2n)},pq}.
\]
Hence $\phi_{\omega,pq} = 1$ if $\nu_{\omega}((p,q)) = 1 \mod (2)$ and 0 otherwise. In other words $\phi_{\omega,pq}$ is only non-zero when the cycle $\omega$ traverses the undirected edge $(p,q)$ an odd number of times.

Therefore $\delta Y^{pq}_n := Y_n(H + \delta H^{pq}) - Y_n(H)$ is given by
\begin{equation}\label{Eqn: delta Y ITE expression}
\delta Y^{pq}_n   = \frac{1}{2}\frac{1}{(N-2)^n}\sum_{\omega \in \Lambda_{2n}}\delta H^{pq}_\omega 
 = -\frac{1}{(N-2)^n}\sum_{\omega \in \Lambda_{2n}} H_{\omega}\phi_{\omega,pq}.
\end{equation}

\subsection{Proof of Proposition \ref{Prop: ITE evolution} Part \ref{Eqn: ITE drift} - Drift term}\label{Sec: ITE Drift term}

Inserting the form (\ref{Eqn: delta Y ITE expression}) for $\delta Y^{pq}_n$ into the expression (\ref{Eqn: Observable change ITE}) for the expected change of an observable undergoing this random walk leads to
\begin{equation}
\E[\delta Y_n |H]  = \frac{1}{d_N} \sum_{p < q} \delta Y^{pq} 
 = - \frac{1}{d_N}\frac{1}{(N-2)^n} \sum_{p < q} \sum_{\omega \in \Lambda_{2n}} H_{\omega} \phi_{\omega,pq} .
 = \frac{4}{d_N} [-nY_n(H) + R_n(H)],
\end{equation}
Using the expression (\ref{Eqn: Y formula ITE}) for $Y_n(H)$ therefore gives the remainder
\begin{equation}\label{Eqn: Drift remainder ITE expression 1}
R_n(H) = \frac{n}{2}\frac{1}{(N-2)^n}  \sum_{\omega \in \Lambda_{2n}} H_{\omega}\bigg(1 - \frac{1}{2n}\sum_{p < q} \phi_{\omega,pq}\bigg) .
\end{equation}
Our aim is to show that $|\E[R_n(H)]| = \O(N^{-1})$. We now write $\Lambda^{\star}_{2n} = \{\omega \in \Lambda_{2n} : |F_{\omega}| = 2n\}$, i.e. the set of non-backtracking cycles in $\Lambda_{2n}$ in which all edges are traversed exactly once. We also write $\Lambda^{\circ}_{2n} = \Lambda_{2n} \setminus \Lambda^{\star}_{2n}$ for the set of non-backtracking cycles in which at least one edge is traversed more than once. Importantly, for all $\omega \in \Lambda^{\star}_{2n}$ we have
\begin{equation}\label{Eqn: Free edge ITE identity}
\sum_{p < q} \phi_{\omega,pq} = 2n.
\end{equation}
Therefore the sum over $\omega$ in $\Lambda_{2n}$ in (\ref{Eqn: Drift remainder ITE expression 1}) can be reduced to the lesser sum over $\Lambda^{\circ}_{2n}$. As outlined in Section \ref{Sec: Graph theoretical tools}, let us write $[\omega]$ for the equivalence class of vertex labelings of non-backtracking cycles and $[\Lambda^{\circ}_{2n}]$ for the set of such equivalence classes in $\Lambda^{\circ}_{2n}$. Given that $\sum_{p < q} \phi_{\omega,pq}$ is the same for all $\omega \in [\omega]$
\begin{multline}
\E|R_n(H)| \leq  \O(N^{-n})\E\bigg|\sum_{[\omega] \in [\Lambda^{\circ}_{2n}]} \bigg(1 - \frac{1}{2n}\sum_{p < q} \phi_{\omega,pq}\bigg)\sum_{\omega \in [\omega]} H_{\omega} \bigg| 
\\
\leq \O(N^{-n})\sum_{[\omega] \in [\Lambda^{\circ}_{2n}]} \bigg|1 - \frac{1}{2n}\sum_{p < q} \phi_{[\omega],pq}\bigg|\E\bigg|\sum_{\omega \in [\omega]}H_{\omega}\bigg|.
\end{multline}
$\frac{1}{2n}\sum_{p < q}\phi_{[\omega],pq} < 2n = \O(1)$ for all $[\omega] \in  [\Lambda^{\circ}_{2n}]$ so, using the inequality $\E|A| \leq \sqrt{\E[A^2]}$,
\[
\E|R_n(H)| \leq \O(N^{-n})\sum_{[\omega] \in [\Lambda^{\circ}_{2n}]} \sqrt{ \sum_{\omega,\omega' \in [\omega]} \E[H_{\omega,\omega'}]}.
\]
where $H_{\omega,\omega'} := H_{\omega}H_{\omega'}$, as in (\ref{Eqn: Multiple walk matrices}). Since $\omega \in \Lambda^{\circ}_{2n}$ there must be a least one edge that is traversed twice (i.e. $|F_{\omega}| \leq 2n-2$) - reducing the number of vertices in $V_{\omega}$ such that $|V_{\omega}| \leq 2n-2$. Therefore, because the quantity $\E[H_{\omega,\omega'}] \neq 0$ only when $\omega \cong \omega'$ (meaning $V_{\omega} = V_{\omega'}$), those contributing pairs $(\omega,\omega')$ satisfy $|V_{\omega,\omega'}| \leq 2n-2$. The contribution from the term inside the square-root is thus obtained by labelling the independent vertices in $V_{\omega,\omega'}$, exactly as done by Wigner \cite{Wigner-1955}. Up to a constant, we have $N(N-1)\ldots (N- |V_{\omega,\omega'}| -1) = \O(N^{|V_{\omega,\omega'}|})$ pairs $(\omega,\omega') \in [\omega]$ such that $\E[H_{\omega,\omega'}] = H_{\omega,\omega'} = 1$, so taking the square root we have $\E|R_n(H)| \leq \O(N^{-n}) | [\Lambda^{\circ}_{2n}]| \sqrt{\O(N^{2n-2})} = | [\Lambda^{\circ}_{2n}]| \O(N^{-1})$. We are thus left to evaluate $| [\Lambda^{\circ}_{2n}]|$, the number of \emph{unlabelled} non-backtracking cycles $\omega \in \Lambda^{\circ}_{2n}$. However, since the labelling has been removed this quantity is now independent of $N$, and so $| [\Lambda^{\circ}_{2n}]| = \O(1)$, meaning $\E|R_n(H)| = \O(N^{-1})$, as desired.

\subsection{Proof of Proposition \ref{Prop: ITE evolution} Part \ref{Eqn: ITE diffusion} - Diffusion term}\label{Sec: ITE Diffusion term}

Similar to the proof of the drift term, we start by inserting the form (\ref{Eqn: delta Y ITE expression}) for $\delta Y^{pq}_n$ into the expression (\ref{Eqn: Multiple observables change ITE}) for the expected change of multiple observables, leading to the following diffusion term
\[
\E[\delta Y_n \delta Y_m |H] = \frac{1}{d_N}\sum_{p < q} \delta Y^{pq}_n \delta Y^{pq}_m = \frac{1}{d_N}  \frac{1}{(N-2)^{n+m}} \sum_{p < q} \sum_{\omega_1 \in \Lambda_{2n}} \sum_{\omega_2 \in \Lambda_{2m}} H_{\omega_1,\omega_2}\phi_{\omega_1,pq}\phi_{\omega_2,pq}.
\]
Therefore, if $\E[\delta Y_n \delta Y_m |H] = \frac{4}{d_N}[2n^2\delta_{nm} + R_{nm}(H)]$, then
\begin{equation}\label{Eqn: ITE Diffusion remainder expression}
R_{nm}(H) = \frac{1}{2}\frac{1}{(N-2)^{n+m}} \sum_{p < q} \sum_{\omega_1 \in \Lambda_{2n}} \sum_{\omega_2 \in \Lambda_{2m}} H_{\omega_1,\omega_2}\phi_{\omega_1,pq}\phi_{\omega_2,pq} - 2n^2\delta_{nm}.
\end{equation}
We estimate the cases $n=m$ and $n \neq m$ separately. For the former case let us take $\Lambda^{\star}_{2n}$ as in Section \ref{Sec: ITE Drift term} and define $\Gamma^{\star}_{2n} = \{ (\omega_1,\omega_2) \in \Lambda^{\star}_{2n} \times \Lambda^{\star}_{2n} : \omega_1 \cong \omega_2 \}$, with the complement $\Gamma^{\circ}_{2n} = (\Lambda_{2n} \times \Lambda_{2n}) \setminus \Gamma^{\star}_{2n}$. For walks $\omega_1 \cong \omega_2$ we have $\phi_{\omega_1,pq} = \phi_{\omega_2,pq}$ and $H_{\omega_1} = H_{\omega_2}$ (so $H_{\omega_1,\omega_2} = 1$). Moreover, if $\omega_1 \in \Lambda^{\star}_{2n}$ then from (\ref{Eqn: Free edge ITE identity}) $\sum_{p<q} \phi_{\omega_1,pq}^2 = \sum_{p<q} \phi_{\omega_1,pq} = 2n$. In addition, if $|V_{\omega_1}| = |V_{\omega_1}| = 2n$ ($\omega$ is a single loop) then for a fixed $\omega_1$ there are $4n$ possible $\omega_2$ such that $\omega_1 \cong \omega_2$ - obtained by choosing the $2n$ possible starting vertices of the cycle and the 2 possible orientations. Labelling the independent vertices of $\omega_1$ leads to a contribution to $|\Gamma^{\star}_{2n}|$ of $4nN(N-1)\ldots (N- (2n-1)) = 4nN^{2n} + \O(N^{2n-1})$. If $|V_{\omega_1}| = |V_{\omega_1}| < 2n$ then the contribution to $|\Gamma^{\star}_{2n}|$ will be of order $\O(N^{2n-1})$. Therefore
\begin{multline}
R_{nn}(H) = \frac{1}{4}\frac{1}{(N-2)^{2n}} \sum_{(\omega_1,\omega_2) \in \Gamma^{\circ}_{2n}} H_{\omega_1,\omega_2} \alpha_{\omega_1,\omega_2} + \frac{n}{2(N-2)^{2n}}|\Gamma^{\star}_{2n}|  - 2n^2 \\
= \frac{1}{4}\frac{1}{(N-2)^{2n}} \sum_{(\omega_1,\omega_2) \in\Gamma^{\circ}_{2n}} H_{\omega_1,\omega_2}\alpha_{\omega_1,\omega_2}  + \O(N^{-1}),
\end{multline}
where $\alpha_{\omega_1,\omega_2} : = \sum_{p < q} \phi_{\omega_1,pq}\phi_{\omega_2,pq}$. Using that $\alpha_{\omega_1,\omega_2}$ is the same for all $\omega_1,\omega_2 \in [\omega_1,\omega_2]$, we find
\begin{multline}\label{Eqn: ITE Rnn expression}
\E|R_{nn}(H)| \leq \O(N^{-2n}) \sum_{[\omega_1,\omega_2] \in [\Gamma^{\circ}_{2n}]} \alpha_{[\omega_1,\omega_2]} \E\bigg| \sum_{(\omega_1,\omega_2) \in [\omega_1,\omega_2]} H_{\omega_1,\omega_2}\bigg| + \O(N^{-1}) \\
\leq \O(N^{-2n}) \sum_{[\omega_1,\omega_2] \in [\Gamma^{\circ}_{2n}]} \alpha_{[\omega_1,\omega_2]} \sqrt{\sum_{(\omega_1,\omega_2),(\omega'_1,\omega'_2) \atop \in [\omega_1,\omega_2]} \E[H_{\omega_1,\omega_2,\omega'_1,\omega'_2}]} + \O(N^{-1}).
\end{multline}
Since we want to maximise the number of vertices the main contribution to the above will come from cycles $\omega_1$, $\omega_2 \in \Lambda^{\star}_{2n}$ in which $|V_{\omega_1}| = |V_{\omega_2}| = 2n$ (i.e. all vertices are distinct). However, $\omega_1$ and $\omega_2$ must share at least one edge (otherwise $\alpha_{\omega_1,\omega_2} = 0$) and we cannot have $V_{\omega_1} = V_{\omega_2}$, otherwise $\omega_1 \cong \omega_2$ meaning $(\omega_1,\omega_2) \notin \Gamma^{\circ}_{2n}$. The subgraph $\hat{G} = (V_{\omega_1,\omega_2},E_{\omega_1,\omega_2})$ is therefore connected and will contain edges that are traversed only once by $(\omega_1,\omega_2)$, implying that $\beta(\hat{G}) = 2$. Furthermore we have $2|E_{\omega_1,\omega_2}| - |F_{\omega_1,\omega_2}| = 4n$, so $|V_{\omega_1,\omega_2}| = |E_{\omega_1,\omega_2}| - \beta(\hat{G}) + 1 = 2n - 1 + |F_{\omega_1,\omega_2}|/2$. In particular $|V_{\omega_1,\omega_2}| - |F_{\omega_1,\omega_2}|$ is the number of isolated vertices in the subgraph $G = (V_{\omega_1,\omega_2},F_{\omega_1,\omega_2})$. The remaining vertices form a single loop connected by the edges $F_{\omega_1,\omega_2}$.

The quantity $\E[H_{\omega_1,\omega_2,\omega'_1,\omega'_2}]$ is only non-zero if $(\omega_1,\omega_2) \cong (\omega'_1,\omega'_2)$, i.e. $F_{\omega_1,\omega_2} = F_{\omega'_1,\omega'_2}$, therefore $|V_{\omega_1,\omega_2,\omega_1',\omega_2'}| \leq |F_{\omega_1,\omega_2}| + 2(|V_{\omega_1,\omega_2}| - |F_{\omega_1,\omega_2}|) =  4n - 2$. Again, we have $|[\Gamma^{\circ}_{2n}] | = \O(1)$, since it is independent of $N$ and also $\alpha_{\omega_1,\omega_2} = \O(1)$, since it is equal to, at most, the number of shared edges of $\omega_1$ and $\omega_2$. Hence,
$\E|R_{nn}(H)| \leq \O(N^{-2n})\sqrt{\O(N^{4n-2})} + \O(N^{-1}) = \O(N^{-1})$.

\vspace{10pt}

It thus remains to evaluate $\E|R_{nm}(H)|$ for $n \neq m$. In this instance we have, from (\ref{Eqn: ITE Diffusion remainder expression})
\begin{multline}
\E|R_{nm}(H)| \leq \O(N^{-(n+m)}) \E\bigg| \sum_{(\omega_1,\omega_2) \atop \in \Lambda_{2n} \times \Lambda_{2m}} H_{\omega_1,\omega_2} \alpha_{\omega_1,\omega_2} \bigg| \\
\leq  \O(N^{-(n+m)})\sum_{[\omega_1,\omega_2] \atop \in [\Lambda_{2n} \times \Lambda_{2m}]}\alpha_{[\omega_1,\omega_2]} \sqrt{ \sum_{(\omega_1,\omega_2),(\omega'_1,\omega'_2) \atop \in [\omega_1,\omega_2]} \E[H_{\omega_1,\omega_2,\omega'_1,\omega'_2}]}.
\end{multline}
Again, the main contribution will come from cycles $\omega_1$ and $\omega_2$ in which all vertices are distinct, i.e. $|V_{\omega_1}| = 2n$ and $|V_{\omega_2}| = 2m$.  However, since $n \neq m$, $\omega_1$ and $\omega_2$ cannot share all the same edges. The condition $\alpha_{\omega_1,\omega_2} >0$ only if $\omega_1$ and $\omega_2$ share at least one edge, and therefore, for the same reasons as above, those contributing collections of cycles $(\omega_1,\omega_2,\omega'_1,\omega'_2)$ for which $\E[H_{\omega_1,\omega_2,\omega'_1,\omega'_2}]$ is non-zero satisfy $|V_{\omega_1,\omega_2,\omega'_1,\omega'_2}| \leq 2n+2m -2$. Hence, $\E|R_{nm}(H)| \leq \O(N^{-(n+m)})|[\Lambda_{2n} \times \Lambda_{2m}]|\sqrt{\O(N^{2n+2m-2})} = \O(N^{-1})$.

\subsection{Proof of Proposition \ref{Prop: ITE evolution} Part \ref{Eqn: ITE remainder} - Remainder term}\label{Sec: ITE Remainder term}

For the remainder term we again insert the expression (\ref{Eqn: delta Y ITE expression}) into (\ref{Eqn: Multiple observables change ITE}), which gives us
\begin{multline}
\E[|\delta Y_n \delta Y_m \delta Y_l|  |H] = \frac{1}{d_N}\sum_{p < q} |\delta Y^{pq}_n \delta Y^{pq}_m \delta Y^{pq}_l| \\
\leq  \frac{1}{d_N}\frac{1}{(N-2)^{n+m+l}}\sum_{p < q} \bigg|\sum_{\omega_1,\omega_2, \omega_3 \atop \in \Lambda_{2n} \times  \Lambda_{2m} \times  \Lambda_{2l}} H_{\omega_1,\omega_2,\omega_3} \phi_{\omega_1,pq}\phi_{\omega_2,pq} \phi_{\omega_3,pq}\bigg|.
\end{multline}
Let us define $\Gamma^{pq}_{2n,2m,2l} := \{(\omega_1,\omega_2,\omega_3) \in \Lambda_{2n} \times  \Lambda_{2m} \times  \Lambda_{2l} : \phi_{\omega_1,pq}\phi_{\omega_2,pq} \phi_{\omega_3,pq} = 1\}$ as the set of non-backtracking cycles that all traverse the edge $(p,q)$ an odd number of times. Taking the expectation over the ITE subsequently leads to
\begin{multline}
\E|R_{nml}(H)| = \frac{d_N}{4}\E [\E[|\delta Y_n \delta Y_m \delta Y_l| |H]] 
= \O(N^{-(n+m+l)})\sum_{p<q}\E \bigg|\sum_{(\omega_1,\omega_2,\omega_3) \in \Gamma^{pq}_{2n,2m,2l}} H_{\omega_1,\omega_2,\omega_3} \bigg| \\
 \leq \O(N^{-(n+m+l)})\sum_{p<q} \sum_{ [\omega_1,\omega_2,\omega_3] \in [\Gamma^{pq}_{2n,2m,2l}]}    \sqrt{ \sum_{(\omega_1,\omega_2,\omega_3),(\omega'_1,\omega'_2,\omega'_3) \atop \in [\omega_1,\omega_2,\omega_3] }  \E[H_{\omega_1,\omega_2,\omega_3,\omega'_1,\omega'_2,\omega'_3}] }.
\end{multline}
The main contribution to the above will again come from non-backtracking cycles in which all vertices are distinct ($|V_{\omega_1}| = |V_{\omega'_1}| = 2n$ etc.), as this maximises the number of vertices. In this case all the cycles $\omega_i,\omega'_i, i=1,2,3$ must traverse the edge $p,q$ precisely once. The expectation $\E[H_{\omega_1,\omega_2,\omega_3,\omega'_1,\omega'_2,\omega'_3}]$ is only non-zero when every edge in $E_{\omega_1,\omega_2,\omega_3,\omega'_1,\omega'_2,\omega'_3}$ is traversed an even number of times by $(\omega_1,\omega_2,\omega_3,\omega'_1,\omega'_2,\omega'_3)$. Therefore the number of vertices will be maximised when every edge (other than $(p,q)$) is traversed precisely twice, in which case $|V_{\omega_1,\omega_2,\omega_3,\omega'_1,\omega'_2,\omega'_3}| = 2n+2m+2l - 4$. However the two vertices $p$ and $q$ are fixed, so when obtaining the contribution inside the square root above by labelling the vertices we get $\E|R_{nml}(H)| = \sum_{p < q} \O(N^{-(n+m+l)})\sqrt{\O(N^{2n+2m+2l - 4 - 2})} = \sum_{p < q}\O(N^{-3}) = \O(N^{-1})$.

\section{Regular imaginary tournament ensemble}\label{Sec: RITE}

In a similar manner to the previous section we shall introduce a random walk within $\RT_N$, which in turn induces a random walk in the variables $Y_n(H)$. Obviously this must be different to that of ITE in the previous section, for if we simply change the sign of one element of $H$ then we no longer have $\sum_{q}H_{pq} = 0$ for all $p$ and therefore the new matrix $H' \notin \RT_N$. To remedy this situation we use a random walk that has already been investigated previously in the literature \cite{Kannan-1997}. To describe this Markov process we first note that every regular tournament on $N$ vertices contains directed cycles $\bq = (q_0,q_1,q_2,q_0)$ of length 3, i.e. $H_{q_0q_1} = H_{q_1q_2} =H_{q_2q_0}$ (see e.g. Figure \ref{Fig: RW for RITE} (a)). We shall refer to such directed cycles as \emph{triangles}, for which there are precisely 
\begin{equation}\label{Eqn: No. directed triangles}
d_N = \frac{N(N-1)(N+1)}{4} 
\end{equation}
in every regular tournament. Note that we distinguish labelled triangles, so $(1,2,3,1) \neq (2,3,1,2)$ for example.

\begin{proof}[Proof of (\ref{Eqn: No. directed triangles})]Let us introduce the following indicator function
\begin{equation}\label{Eqn: Indicator function}
\Theta_{\bq}(H) = \frac{1}{8}(1 - H_{q_0q_1}H_{q_1q_2})(1 - H_{q_1q_2}H_{q_2q_0})(1 - H_{q_2q_0}H_{q_0q_1})(1- \delta_{q_0q_1}\delta_{q_1q_2}\delta_{q_2q_0}),
\end{equation}
which satisfies
\begin{equation}\label{Eqn: RT Indicator}
\Theta_{\bq}(H) = \begin{cases} 1 & H_{q_0q_1} = H_{q_1q_2} =H_{q_2q_0} \ \text{and} \ q_0 \neq q_1 \neq q_2 \neq q_0 \\
0 & \text{otherwise} \ .
\end{cases}
\end{equation}
Summing over $\bq$ and using that $H_{pq}H_{pq} = -1$ and $\sum_{r: r \neq p,q} H_{qr} = -H_{qp}$ gives  
\begin{align}\label{Eqn: Indicator expansion}
\sum_{\bq} \Theta_{\bq}(H) & =  \frac{1}{8}\sum_{q_0 \neq q_1 \neq q_2 \neq q_0}(1 - H_{q_0q_1}H_{q_1q_2})(1 - H_{q_1q_2}H_{q_2q_0})(1 - H_{q_2q_0}H_{q_0q_1}) \nonumber \\
& =  \frac{1}{8}\sum_{q_0 \neq q_1 \neq q_2 \neq q_0}(2  - 2H_{q_0q_1}H_{q_1q_2} - 2H_{q_1q_2}H_{q_2q_0} - 2H_{q_2q_0}H_{q_0q_1}) \nonumber \\
& = \frac{1}{4}\bigg[\sum_{q_0 \neq q_1 \neq q_2 \neq q_0}1 + 3\sum_{p \neq q} H_{qp}H_{pq}\bigg] = \frac{1}{4}\bigg[N(N-1)(N-2) + 3N(N-1)\bigg] = d_N.
\end{align}
\end{proof}

\begin{figure}[ht]
\centerline{(a)\includegraphics[width=0.28\textwidth]{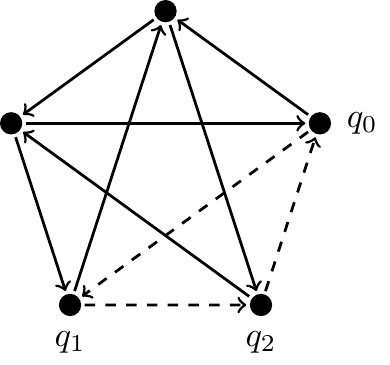}
\hspace{70pt}
(b)\includegraphics[width=0.28\textwidth]{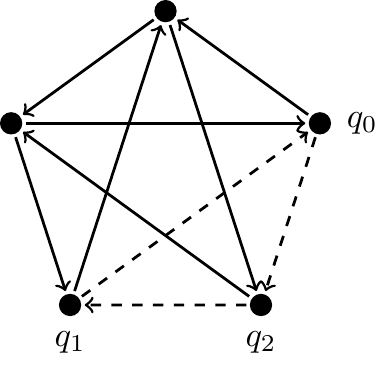}}
\caption{The Markov process consists of choosing uniformly at random one of the $d_N$ triangles in the regular tournament graph (a) and then reversing the orientation in order to obtain (b). This preserves the number of incoming and outgoing edges to all vertices, or, in terms of the corresponding adjacency matrix, this preserves the condition $\sum_{q} A_{pq} = (N-1)/2$ for all $p$.}
\label{Fig: RW for RITE}
\end{figure}

The random walk is performed by choosing one of these $d_N$ triangles $\bq$ uniformly at random and then reversing the orientation, i.e. $H_{q_0q_1}, H_{q_1q_2}, H_{q_2q_0} \to -H_{q_0q_1}, -H_{q_1q_2}, -H_{q_2q_0}$ (see Figure \ref{Fig: RW for RITE}). This guarantees the new matrix $H' = H + \delta H^{\bq}$ is contained in $\RT_N$ as it satisfies $\sum_{q} H'_{pq} = 0$ for all $p$. The difference matrix is given explicitly by
\begin{equation}\label{Eqn: RITE Matrix change}
\delta H^{\bq} := H' - H = \sum_{i=0}^2 (-2H_{q_iq_{i+1(3)}})(\e_{q_i}\e_{q_{i+1(3)}}^T - \e_{q_{i+1(3)}}\e_{q_i}^T ).
\end{equation}
We may summarise this random walk in the following transition probability for $H,H' \in \RT_N$
\begin{equation}\label{Eqn: RRTE motion}
\rho(H \to H') = \left\{\begin{array}{ll}
\frac{1}{d_N} & |H -H'|_{\RT_N} = 1 \\
0 & |H -H'|_{\RT_N} \neq 1, \end{array} \right.
\end{equation}
where $|H -H'|_{\RT_N} = \frac{1}{6}\sum_{p,q} |H_{pq} - H'_{pq}|$ is equal to 1 if and only if $H,H' \in \RT_N$ differ by the reversal of exactly one triangle. Starting at any tournament $H \in \R_N$, one may reach any other tournament $H' \in \R_N$ by performing successive reversals. Moreover, this Markov process is known to be mixing \cite{Kannan-1997}.

If $P_t(H)$ is the probability of the random walker to be at $H$ at time $t$ then
\[
P_{t+1}(H') = \sum_{H \in \RT_N} \rho(H \to H') P_t(H) = \sum_{H \in \RT_N : |H - H'|_{\RT_N} = 1} \frac{1}{d_N} P_t(H).
\]
Thus $P_t(H) = |\RT_N|^{-1}$ implies that $P_{t+1}(H') = |\RT_N|^{-1}$ also, i.e. $H$ and $H'$ are an exchangeable pair.

Using the indicator function (\ref{Eqn: RT Indicator}) the expected change of some observable under this random walk is therefore
\begin{equation}\label{Eqn: RITE expected change}
\E[\delta f |H] := \sum_{H' \in \RT_N} \rho(H \to H') [f(H') - f(H)] = \sum_{\bq} \frac{\Theta_{\bq}(H)}{d_N}[f(H + \delta H^{\bq}) - f(H)].
\end{equation}
Similarly, higher moments are obtained by taking the expectation of products of changes, i.e. for $f_1(H),f_2(H),\ldots,f_k(H)$
\begin{equation}\label{Eqn: RITE expected change multiple variables}
\E[\delta f_1 \ldots \delta f_k | H] := \sum_{\bq} \frac{\Theta_{\bq}(H)}{6d_N}[f_1(H + \delta H^{\bq}) - f_1(H)] \ldots [f_k(H + \delta H^{\bq}) - f_k(H)].
\end{equation}
Here we are again interested in the particular observables $Y_n(H)$ given in (\ref{Eqn: Centred Chebyshev}). Using the expression (\ref{Eqn: Chebyshev trace}) for $Y_n(H)$ in Section \ref{Sec: Graph theoretical tools} we find
\[
Y_n(H)  = \frac{1}{2} \frac{1}{(N-2)^n} \sum_{\omega \in \Omega_{2n}} H_\omega - \E[H_{\omega}] =  \frac{1}{2} \frac{1}{(N-2)^n} \sum_{\omega \in \Lambda_{2n}} H_\omega - \E[H_{\omega}].
\]
with $\Omega_{2n}$ and $\Lambda_{2n}$ the same as in previous sections. Note, however, that in contrast to the analogous expression (\ref{Eqn: Y formula ITE}) for the ITE the expectation term is not identically zero. This is precisely due to the global correlations enforced by demanding the row sums of $H$ are zero and will require the use of Lemma \ref{Lem: Regular tournament expectation} below to evaluate.

The following proposition describes how the $Y_n(H)$ behave under the aforementioned random walk.

\begin{proposition}\label{Prop: RITE evolution}Let $(H',H)$ be an exchangeable pair from the RITE connected via (\ref{Eqn: RRTE motion}). Let $Y_n(H)$ be as defined in (\ref{Eqn: Centred Chebyshev}) with $N$ sufficiently large. Then
\begin{enumerate}
\item \label{Eqn: RITE drift} $\frac{d_N}{6N}\E[\delta Y_n|H] = -nY_n(H) + R_n(H)$ (Drift term)
\item \label{Eqn: RITE diffusion}$\frac{d_N}{6N}\E[\delta Y_n\delta Y_m|H] = 2n^2\delta_{nm} + R_{nm}(H)$ (Diffusion term)
\item \label{Eqn: RITE remainder}$\frac{d_N}{6N}\E[|\delta Y_n\delta Y_m\delta Y_l | |H] = R_{nml}(H)$ (Remainder term)
\end{enumerate}
with $\E|R_n(H)| = \O(N^{-\frac{1}{2}})$, $\E|R_{nm}(H)| =\O(N^{-1})$ and $\E|R_{nml}(H)| = \O(N^{-1})$ for all $n,m,l = 2,\ldots,k$.
\end{proposition}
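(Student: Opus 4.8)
The plan is to transport the three-step scheme of Proposition \ref{Prop: ITE evolution} (drift, diffusion, triple remainder) to the triangle-reversal walk, adjusting for the two features that the regular constraint introduces. Since reversing the triangle $\bq$ flips the signs of its three edges simultaneously, (\ref{Eqn: RITE Matrix change}) gives, for any non-backtracking cycle $\omega$, a change $\delta H^{\bq}_\omega = -2 H_\omega\,\psi_{\omega,\bq}$, where $\psi_{\omega,\bq}\in\{0,1\}$ is the exact analogue of $\phi_{\omega,pq}$: it equals $1$ precisely when $\omega$ traverses an odd number of the three edges of $\bq$. With Lemma \ref{Lem: Non-backtracking} this yields $\delta Y^{\bq}_n = -(N-2)^{-n}\sum_{\omega\in\Lambda_{2n}}H_\omega\,\psi_{\omega,\bq}$, mirroring (\ref{Eqn: delta Y ITE expression}). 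The two genuinely new ingredients are (i) the configuration-dependent indicator $\Theta_{\bq}(H)$ appearing in the expected change (\ref{Eqn: RITE expected change}), which couples the choice of reversal to the current matrix, and (ii) the global correlations from $\sum_q H_{pq}=0$, which make the centering $\E[H_\omega]$ in (\ref{Eqn: Centred Chebyshev}) nonzero and force us to control $\E[H_{\bar{\omega}}]$ for every walk-collection $\bar{\omega}$ rather than setting it to zero.

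For the drift I would substitute $\delta Y^{\bq}_n$ into (\ref{Eqn: RITE expected change}), exchange the sums over $\bq$ and $\omega$, and isolate the leading contribution. This comes from single-loop cycles $\omega$ (all $2n$ edges free) together with triangles $\bq$ sharing exactly one edge with $\omega$; expanding $\Theta_{\bq}(H)$ as in (\ref{Eqn: Indicator expansion}), the number of triangles through each of the $2n$ free edges of $\omega$ is of order $N$, which is precisely what the normalisation $\tfrac{d_N}{6N}$ is built to cancel, leaving the leading term $-nY_n(H)$. Everything else is collected into $R_n(H)$: contributions of cycles with repeated edges, the subleading pieces generated by the $H$-dependence of $\Theta_{\bq}(H)$, and the deterministic correction coming from the nonzero mean $\E[H_\omega]$. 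I would then bound $\E|R_n(H)|$ by Cauchy--Schwarz, $\E|A|\le\sqrt{\E[A^2]}$, reducing everything to RITE expectations $\E[H_{\bar{\omega}}]$; these are estimated by Lemma \ref{Lem: Regular tournament expectation}, while the power of $N$ gained from labelling the independent vertices is capped by the vertex-versus-free-edge inequality of Corollary \ref{Cor: RITE vertex edge identity}. The slower rate $\O(N^{-1/2})$ (against $\O(N^{-1})$ for the ITE) is precisely the imprint of the correlations: the walks carrying free edges, whose expectation vanishes identically for independent entries, here survive at order $N^{-1/2}$, and likewise the fluctuation of the triangle count about its order-$N$ mean does not fully cancel in the drift.

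The diffusion and triple terms follow the same template and are more forgiving. For the diffusion I substitute $\delta Y^{\bq}_n\delta Y^{\bq}_m$ into (\ref{Eqn: RITE expected change multiple variables}); the diagonal $2n^2\delta_{nm}$ arises from the configurations in which $\omega_1\cong\omega_2$ are single loops sharing exactly the triangle edge, for which $H_{\omega_1,\omega_2}=1$ deterministically and no correlation enters the leading term, while the rest is absorbed into $R_{nm}(H)$. The triple term is bounded directly. In each case, after Cauchy--Schwarz, the estimate reduces to bounding $\E[H_{\bar{\omega},\bar{\omega}'}]$ for a doubled collection of four (respectively six) walks via Lemma \ref{Lem: Regular tournament expectation} and counting its labellable vertices via Corollary \ref{Cor: RITE vertex edge identity}. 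Because these quantities already involve products of two or three walk-collections, the extra free edges forced by the sharing conditions (a nonzero overlap weight requires a shared edge, while $\omega_1\not\cong\omega_2$ requires a free one) together with the quadratic structure of the Cauchy--Schwarz bound deliver the stronger rate $\O(N^{-1})$.

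The main obstacle, and the only part with no ITE counterpart, is the correlated-expectation bookkeeping: one must show that the correlations induced by the row-sum constraint decay fast enough that each $\E[H_{\bar{\omega}}]$ is suppressed by the correct power of $N$ once the independent vertices are labelled. This is exactly the content of Lemma \ref{Lem: Regular tournament expectation}, proved by adapting McKay's integral representation for $|\RT_N|$, and the delicate step is to match its decay rate against the tight counting of Corollary \ref{Cor: RITE vertex edge identity} in each class of walk-collections. A secondary technical nuisance is carrying the $H$-dependent indicator $\Theta_{\bq}(H)$ through every sum, since it both selects the admissible reversals and contributes its own products of matrix elements to the remainders.
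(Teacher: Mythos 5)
Your overall scaffolding (substitute $\delta Y^{\bq}_n$, expand $\Theta_{\bq}(H)$ as in (\ref{Eqn: Indicator expansion}), apply $\E|A|\leq\sqrt{\E[A^2]}$, then invoke Lemma \ref{Lem: Regular tournament expectation} together with Corollary \ref{Cor: RITE vertex edge identity}) matches the paper, and your diffusion and triple-product sketches are consistent with Sections \ref{Sec: RITE Diffusion} and \ref{Sec: RITE Remainder}. But there is a genuine gap in your drift argument: you propose to collect into $R_n(H)$, as separate contributions, ``the deterministic correction coming from the nonzero mean $\E[H_\omega]$'' and ``the subleading pieces generated by the $H$-dependence of $\Theta_{\bq}(H)$,'' and then bound everything term by term. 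That fails. After the normalisation $\frac{n}{2(N-2)^n}$ in (\ref{Eqn: Drift remainder expression 1}), the two pieces $\sum_{\omega\in\Lambda_{2n}}\E[H_\omega]$ and $\frac{1}{4nN}\sum_{\omega}\sum_{\bq}'\phi_{\omega,\bq}H_{q_0q_1}H_{q_1q_2}H_\omega$ are each of size $\O(1)$, not $o(1)$: Lemma \ref{Lem: Regular tournament expectation} only gives $\sum_{\omega\in\Lambda^{\star}_{2n}}\E[H_\omega]=\O(N^{n})$, while the triangle term, once the regularity $\sum_q H_{pq}=0$ is used to resolve free summation indices, contains purely \emph{deterministic} constants of order $N^{n+1}$ --- the constant $c_{n,N}$ in (\ref{Eqn: Prop 1 basic quantity}), traceable to identities such as $\sum_{\omega\in C_3}H_\omega=-N(N-1)$. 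Only the \emph{difference} of the two pieces is small, and the paper's proof hinges on exhibiting this cancellation: it uses the exchangeable-pair stationarity $\E[\delta Y_n]=\E[\E[\delta Y_n\,|\,H]]=0$ to rewrite $\sum_\omega\E[H_\omega]$ as $\frac{1}{4nN}\sum_\omega\sum_{\bq}'\phi_{\omega,\bq}\E[H_{q_0q_1}H_{q_1q_2}H_\omega]+\O(N^{n-1})$ (the chain (\ref{Eqn: Simplification 1})--(\ref{Eqn: Simplification 4})), so that the dangerous part of the remainder becomes the \emph{centred} variable $S^{(2)}_n(H)$ of (\ref{Eqn: Second part expression}), in which $c_{n,N}$ cancels exactly. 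Making this quantitative requires a further step with no ITE analogue that your proposal never mentions: the explicit re-expression, via repeated use of the row-sum constraint, of $\sum_{\omega\in\Lambda^{\dagger}_{2n}}\sum_{\bq}'\phi_{\omega,\bq}H_{q_0q_1}H_{q_1q_2}H_\omega$ in terms of the walk classes $A_r,B_r,C_r,D_r$ of (\ref{Eqn: Walks A dfn})--(\ref{Eqn: Walks D dfn}) (Proposition \ref{Prop: Second part decomp}); it is the class $A_{2n}$, with $|V_{\omega}|\leq 2n+1$, that yields $\E|S^{(2)}_n(H)|=\O(N^{n-1/2})$ in Lemma \ref{Lem: Drift remainder pre estimates} and hence the rate $\O(N^{-1/2})$.

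A secondary misattribution follows from the same omission: you assign part of the $N^{-1/2}$ loss to ``the fluctuation of the triangle count about its order-$N$ mean.'' In the paper that fluctuation is exactly the term $S^{(1)}_n(H)$ with weights $\kappa_\omega=1-\frac{1}{12nN}\sum_{\bq}'\phi_{\omega,\bq}$, and it is bounded by $\O(N^{n-1})$, contributing only $\O(N^{-1})$ to $R_n(H)$; the entire $N^{-1/2}$ rate comes from $S^{(2)}_n(H)$, i.e.\ from correlated expectations of walks with an odd vertex budget created by the regularity manipulation. Your heuristic that ``walks carrying free edges survive at order $N^{-1/2}$'' points in the right direction, but without the stationarity recentering and the $A,B,C,D$ decomposition there is no proof that the $\O(1)$-level deterministic terms cancel, so the drift bound as you have structured it does not close.
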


\begin{proof}
Parts \ref{Eqn: RITE drift}, \ref{Eqn: RITE diffusion} and \ref{Eqn: RITE remainder} of Proposition \ref{Prop: RITE evolution} will be proved in Sections \ref{Sec: RITE Drift}, \ref{Sec: RITE Diffusion} and \ref{Sec: RITE Remainder} respectively.
\end{proof}

Before progressing to Section \ref{Sec: RITE Drift} we first outline two lemmas that are necessary for the proofs.

\begin{lemma}\label{Lem: Indicator simplification}We have the simplification
\begin{equation}
\E[\delta f|H] = \sum_{\bq} \frac{\Theta_{\bq}(H)}{d_N} [f(H + \delta H^{(\bq)}) - f(H)]  = \sum'_{\bq}\frac{(1 - 3H_{q_0q_1}H_{q_1q_2})}{4 d_N}[f(H + \delta H^{(\bq)}) - f(H)],
\end{equation}
where the prime in the sum denotes that $q_0 \neq q_1 \neq  q_2 \neq q_0$ .
\end{lemma}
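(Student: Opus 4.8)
The plan is to rewrite $\Theta_{\bq}(H)$ first as a pointwise identity valid on distinct triples, and then to remove the remaining asymmetry by symmetrising the sum. I would begin by restricting the sum to distinct $q_0,q_1,q_2$. If all three indices coincide, the last factor $(1-\delta_{q_0q_1}\delta_{q_1q_2}\delta_{q_2q_0})$ in (\ref{Eqn: Indicator function}) vanishes; while if exactly two coincide, say $q_0=q_1$, then $H_{q_0q_1}=H_{q_0q_0}=0$ and, using $H_{pq}H_{qp}=1$, the factor $(1-H_{q_1q_2}H_{q_2q_0})=1-H_{q_0q_2}H_{q_2q_0}=0$ (the other two-coincident cases are identical). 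Either way $\Theta_{\bq}(H)=0$, so only distinct triples contribute and the prime on the sum is justified. On such triples the final factor equals $1$, and setting $a=H_{q_0q_1}H_{q_1q_2}$, $b=H_{q_1q_2}H_{q_2q_0}$, $c=H_{q_2q_0}H_{q_0q_1}$, the identity $H_{pq}H_{qp}=1$ gives $abc=-1$ and hence $ab=-c$, $bc=-a$, $ca=-b$. This is exactly the computation already carried out in (\ref{Eqn: Indicator expansion}), and expanding the product yields the pointwise simplification $\Theta_{\bq}(H)=\tfrac14(1-a-b-c)$.

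It then remains to replace the symmetric combination $a+b+c$ by $3a=3H_{q_0q_1}H_{q_1q_2}$ inside the sum. The key structural input is that the difference matrix $\delta H^{\bq}$ in (\ref{Eqn: RITE Matrix change}) merely reverses the signs of the three undirected edges of the triangle, so it is independent of the cyclic order and of the chosen starting vertex. Consequently the increment $\Delta:=f(H+\delta H^{\bq})-f(H)$ depends only on the unordered vertex set $\{q_0,q_1,q_2\}$. I would therefore group the primed sum according to vertex sets and sum over the six orderings of each. The quantity $a+b+c$ is invariant under all six orderings, whereas the leading product $a=H_{q_0q_1}H_{q_1q_2}$ runs over each of the three values $a,b,c$ exactly twice. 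Hence over the six orderings $\sum(a+b+c)\Delta=3\sum a\,\Delta$, and summing over vertex sets gives $\sum'_{\bq}(a+b+c)\Delta=3\sum'_{\bq}H_{q_0q_1}H_{q_1q_2}\,\Delta$. Substituting this into $\tfrac14\sum'_{\bq}(1-a-b-c)\Delta$ produces $\tfrac14\sum'_{\bq}(1-3H_{q_0q_1}H_{q_1q_2})\Delta$, which is the claimed expression.

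The main obstacle is that the asserted identity is genuinely false at the level of individual terms: on a transitive triangle one has $a+b+c=1$, so $\tfrac14(1-a-b-c)=0$, whereas $\tfrac14(1-3a)$ need not vanish. The simplification survives only after summation, so the crux of the argument is to recognise the orientation and rotation invariance of the flip $\delta H^{\bq}$ and to carry out the symmetrisation over the six orderings carefully; once that invariance is in place, the remaining algebra is routine.
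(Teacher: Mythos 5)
Your proposal is correct and follows essentially the same route as the paper: restrict to distinct triples, expand the product using $H_{pq}H_{qp}=1$ (so $abc=-1$ and the quadratic terms collapse) to get the pointwise form $\tfrac14(1-a-b-c)$, and then exploit the invariance of $\delta f^{\bq}$ under relabellings of the triangle to replace $a+b+c$ by $3H_{q_0q_1}H_{q_1q_2}$ under the sum. The only (cosmetic) difference is that you symmetrise over all six orderings where the paper uses the three cyclic permutations; your explicit checks that the two-coincident cases vanish and that the identity fails pointwise (e.g.\ on transitive triangles) are details the paper leaves implicit, and they are accurate.
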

\begin{proof}Starting with the expression (\ref{Eqn: Indicator function}) for $\Theta_{\bq}(H)$ we can remove the factor $(1- \delta_{q_0q_1}\delta_{q_1q_2}\delta_{q_2q_0})$ provided we assume that $q_0 \neq q_1 \neq  q_2 \neq q_0$. Therefore, expanding in the same way as (\ref{Eqn: Indicator expansion}) and writing $\delta f^{\bq} := f(H + \delta H^{(\bq)}) - f(H)$ to condense notation we find
\begin{eqnarray}
\E[\delta f|H]  & = & \frac{1}{8 d_N}\sum'_{\bq}(1 - H_{q_0q_1}H_{q_1q_2})(1 - H_{q_1q_2}H_{q_2q_0})(1 - H_{q_2q_0}H_{q_0q_1})\delta f^{\bq} \nonumber \\
 & = & \frac{1}{4 d_N}\sum'_{\bq}(1  - H_{q_0q_1}H_{q_1q_2} - H_{q_1q_2}H_{q_2q_0} - H_{q_2q_0}H_{q_0q_1}) \delta f^{\bq} \nonumber \\
 & = & \frac{1}{4 d_N}\sum'_{\bq}(1  - 3H_{q_0q_1}H_{q_1q_2}) \delta f^{\bq},
\end{eqnarray}
where in the last line we have cyclicly permuted the indices in $q_0,q_1$ and $q_2$.
\end{proof}

\begin{lemma}\label{Lem: Regular tournament expectation} Let $\bp = (p_0,\ldots,p_{v-1})$ be $v$ distinct vertices and $E = \{(p_i,p_j)\}$ be a collection of $k$ edges on these vertices. Let us write $H_E := \prod_{(p,q) \in E} H_{pq}$ for the product of matrix elements over these edges and $\E$ the expectation over the RITE. Then
\begin{equation}\label{Eqn: RT expectation}
\E[H_E] = \O(N^{-\frac{k}{2}}).
\end{equation}
\end{lemma}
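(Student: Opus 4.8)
The plan is to adapt McKay's integral method \cite{McKay-1990} for enumerating regular tournaments to the weighted sum $\sum_{H \in \RT_N} H_E$. First I would write $H_{pq} = i s_{pq}$ with $s_{pq} = -s_{qp} \in \{\pm 1\}$, so that the regularity constraint $\sum_{q} H_{pq} = 0$ becomes the integer condition $\sum_{q \neq p} s_{pq} = 0$ for every row $p$, while $H_E = i^k \prod_{(p,q) \in E} s_{pq}$. Since the $H \mapsto -H$ symmetry (reversing every edge) preserves the RITE and sends $H_E \mapsto (-1)^k H_E$, one sees immediately that $\E[H_E] = 0$ when $k$ is odd, so it suffices to treat even $k$. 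Encoding each row constraint by the Fourier identity $\mathbf{1}[m=0] = \frac{1}{2\pi}\int_{-\pi}^{\pi} e^{i\theta m}\,d\theta$ (valid for integer $m$) introduces one angle $\theta_p$ per vertex. Interchanging the sum over the free variables $s_{pq}$ ($p<q$) with the integral and using antisymmetry to rewrite the exponent as $\sum_{p<q}(\theta_p - \theta_q)s_{pq}$, the sum factorises over unordered pairs.

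Each pair $\{p,q\} \notin E$ then contributes $\sum_{s = \pm 1} e^{i(\theta_p - \theta_q)s} = 2\cos(\theta_p - \theta_q)$, while each pair in $E$ contributes $\sum_{s = \pm 1} s\, e^{i(\theta_p - \theta_q)s} = \pm 2i\sin(\theta_p - \theta_q)$, the sign depending only on the stored orientation. Dividing by $|\RT_N|$, which is the same integral with $E = \emptyset$, the factors of $i$ and $2$ collect into a prefactor $\pm 1$ independent of $N$, giving
\[
\E[H_E] = (\pm 1)\cdot \frac{\displaystyle\int_{[-\pi,\pi]^N} \prod_{\{p,q\}\in E}\sin(\theta_p - \theta_q)\prod_{\{p,q\}\notin E}\cos(\theta_p - \theta_q)\,d\theta}{\displaystyle\int_{[-\pi,\pi]^N}\prod_{p<q}\cos(\theta_p - \theta_q)\,d\theta}.
\]
The remaining task is to show this ratio is $\O(N^{-k/2})$.

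To estimate it I would localise both integrals near the diagonal $\theta_1 \approx \cdots \approx \theta_N$, where the denominator integrand is maximal. There $\cos(\theta_p-\theta_q) \approx \exp(-\tfrac{1}{2}(\theta_p-\theta_q)^2)$ and $\sin(\theta_p-\theta_q) \approx \theta_p - \theta_q$, so both integrals reduce to Gaussian integrals against $\exp(-\tfrac{1}{2}\theta^{T} L\theta)$, where $L = N\mathbf{I}_N - \mathbf{E}_N$ is the Laplacian of the complete graph. Its single zero mode (the all-ones direction) reflects the translation invariance $\theta_p \mapsto \theta_p + c$ and contributes a bounded factor; on the orthogonal complement every eigenvalue equals $N$, giving $\mathrm{Cov}(\theta_p,\theta_q) = \tfrac{1}{N}(\delta_{pq} - \tfrac{1}{N})$ and hence $\Var(\theta_p-\theta_q) = \tfrac{2}{N}$ with all relevant covariances $\O(N^{-1})$. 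The ratio then approaches the Gaussian moment $\E_{L}[\prod_{\{p,q\}\in E}(\theta_p - \theta_q)]$, which by Wick's theorem pairs the $k$ linear factors into $k/2$ covariances, each $\O(N^{-1})$, yielding the claimed $\O(N^{-k/2})$.

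The hard part will be the rigorous control of the regions away from the diagonal and the justification of the Gaussian approximation with errors uniform enough to preserve the $N^{-k/2}$ rate. Because $\cos(\theta_p-\theta_q)$ changes sign, the integrand is not a positive measure, and besides the principal peak there are combinatorially many secondary stationary regions (for instance where the angles split into two clusters offset by $\pi$) at which $\big|\prod\cos\big|$ attains the same height. Adapting McKay's region decomposition \cite{McKay-1990} — bounding the off-diagonal contributions and showing the secondary peaks contribute at the same order with the same $\sin$-induced suppression — is the technical core. Once the denominator's order is pinned down (consistent with the enumeration (\ref{No. RT})) and the numerator is bounded by the corresponding Gaussian moment, the estimate (\ref{Eqn: RT expectation}) follows.
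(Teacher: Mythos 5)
Your proposal follows essentially the same route as the paper's proof in Appendix \ref{App: Expectation in RITE}: the same Fourier encoding of the row-sum constraints with one angle per vertex, the same factorisation into $\sin(\theta_p-\theta_q)$ factors for edges of $E$ and $\cos(\theta_p-\theta_q)$ for the rest, the same Gaussian localisation with covariance $\Sigma_{N-1}=\frac{1}{N}(\mathbf{I}_{N-1}+\mathbf{E}_{N-1})$ giving $\E_{\theta}[(\theta_p-\theta_q)^2]=2/N$ and hence the $N^{-k/2}$ rate (the paper bounds $\E_{\theta}\big[\prod_{(p,q)\in E}|\theta_p-\theta_q|\big]$ via H\"older rather than a signed Wick computation), and the same McKay-style region decomposition for the off-diagonal contributions. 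The "technical core" you defer is exactly what the paper executes: the evenness of each row sum $S_p$ lets it fold the domain to $[-\pi/2,\pi/2]^N$, which identifies your $\pi$-offset secondary peaks with the main one, and the split into $D^{(1)}$ and $D^{(2)}$ (with the further partition $\F_{>}\cup\F_{\leq}$) shows all remaining regions are exponentially negligible.
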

\begin{proof}See Appendix \ref{App: Expectation in RITE}.
\end{proof}

We stress the above lemma provides a key part in our subsequent analysis of the remainder terms in the motion of $Y_n(H)$. In contrast to Wigner ensembles, in which the elements are independent with mean 0, the RITE has global correlations between the matrix elements. However, Lemma \ref{Lem: Regular tournament expectation} shows that whilst we do not have $\E[H_E] = \prod_{(p,q) \in E}\E[H_{pq}] = 0$, as in the Wigner case, the correlations for a fixed number of elements are sufficiently weak as to allow for convergence to universal behaviour in the large $N$ limit.

\vspace{5pt}

Using the expression (\ref{Eqn: RITE Matrix change}), the $e = (p',q')$-th element of $\delta H^{\bq}$ is, in analogy to the corresponding ITE expression (\ref{Eqn: ITE matrix element change}), given by
\[
\delta H^{\bq}_e = -2\sum_{i=0}^2H_{q_iq_{i+1}}(\delta_{p'q_i}\delta_{q'q_{i+1}} - \delta_{p'q_i}\delta_{q'q_{i+1}})
= -2H_{e}\sum_{i=0}^2(\delta_{p'q_i}\delta_{q'q_{i+1}} +\delta_{p'q_i}\delta_{q'q_{i+1}}) = -2H_{e}\chi_{e,\bq}.
\]
Thus $\chi_{e,\bq}$ is an indicator function equal to 1 if the edge $e = (p',q')$ if it is equal to one of the (undirected) edges $\{(q_0,q_1),(q_1,q_0),(q_1,q_2)\}$ and 0 otherwise. The corresponding change in the matrix product $H_{\omega}$, for the non-backtracking cycle $\omega = (p_0,p_1,\ldots p_{2n-1},p_0)$ is therefore
\begin{equation}\label{Eqn: Homega change}
\delta H^{\bq}_{\omega}  := (H + \delta H^{\bq})_\omega  - H_{\omega} =  H_{\omega}\left[ \prod_{i=0}^{2n-1} (1 - 2\chi_{p_i p_{i+1(2n)},\bq}) - 1\right] = -2H_{\omega} \phi_{\omega,\bq},
\end{equation}
where we can also write
\[
\phi_{\omega,\bq} = \frac{1}{2}(1 - (-1)^{\chi_{\omega,\bq}}), \qquad \chi_{\omega,\bq} = \sum_{i= 0}^{2n-1} \chi_{p_i p_{i+1(2n)},\bq}.
\]
Thus $\phi_{\omega,\bq}$ is an indicator function equal to 1 if the edges in the triangle $\bq$ are traversed an odd number of times by $\omega$ and 0 otherwise.

The change in $Y_n(H)$ brought about by reversing the orientation of $\bq$ can therefore be expressed using (\ref{Eqn: Homega change}) as
\begin{equation}\label{Eqn: deltaY RITE}
\delta Y^{\bq}_n := Y_n(H + \delta H^{\bq}) - Y_n(H)
= \frac{1}{2}\frac{1}{(N-2)^n}\sum_{\omega \in \Lambda_{2n}} \delta H^{\bq}_{\omega} 
 = -\frac{1}{(N-2)^n}\sum_{\omega \in \Lambda_{2n}} H_{\omega}\phi_{\omega,\bq}.
\end{equation}

\subsection{Proof of Proposition \ref{Prop: RITE evolution} Part \ref{Eqn: RITE drift} - Drift term}\label{Sec: RITE Drift}
Starting from the expression (\ref{Eqn: RITE expected change}) for the expected change of an observable, inserting the expression (\ref{Eqn: deltaY RITE}) and utilising Lemma \ref{Lem: Indicator simplification} we find
\begin{equation}\label{Eqn: Expected Y change RITE}
\E[\delta Y_n| H] := \frac{1}{ d_N}\sum_{\bq} \Theta_{\bq}(H) \delta Y^{\bq}_n  =  - \frac{1}{2}\frac{1}{(N-2)^n}  \frac{1}{2 d_N} \sum_{\omega \in \Lambda_{2n}} \sum'_{\bq}(1 - 3H_{q_0q_1}H_{q_1q_2}) H_{\omega} \phi_{\omega,\bq}.
\end{equation}
Therefore we may write
\[
\E[\delta Y_n| H] = \frac{6 N}{d_N}[-nY_n(H)  + R_n(H)],
\]
with the remainder given by
\begin{equation}\label{Eqn: Drift remainder expression 1}
R_n(H) = \frac{1}{2}\frac{n}{(N-2)^n}\sum_{\omega \in \Lambda_{2n}}\bigg[ H_{\omega}\bigg( 1 - \frac{1}{12nN}\sum_{\bq}'\phi_{\omega,\bq}\bigg) - \bigg(\E[H_{\omega}] -  \frac{1}{4nN}\sum_{\bq}'\phi_{\omega,\bq} H_{q_0q_1}H_{q_1q_2}H_{\omega}\bigg)  \bigg].
\end{equation}
Now, crucially, by splitting the sum over $\Lambda_{2n}$ into $\Lambda^{\star}_{2n} = \{\omega \in \Lambda_{2n} : |F_{\omega}| = 2n\}$ and $\Lambda^{\circ}_{2n} = \Lambda_{2n} \setminus \Lambda^{\star}_{2n}$ (see Section \ref{Sec: ITE}) the constant expectation term in the above can be expressed, subject to a subleading correction in $N$, in the following alternative manner
\begin{align}
\sum_{\omega \in \Lambda_{2n}}\E[H_{\omega}] & = \sum_{\omega \in \Lambda^{\star}_{2n}}\E[H_{\omega}]  + \sum_{\omega \in \Lambda^{\circ}_{2n}}\E[H_{\omega}]  \label{Eqn: Simplification 1} \\
& = \frac{1}{12nN}\sum_{\omega \in \Lambda^{\star}_{2n}} \sum_{\bq}' \phi_{\omega,\bq}  \E[H_{\omega}] + \O(N^{n-1}) \label{Eqn: Simplification 2}\\ 
& = \frac{1}{12nN}\sum_{\omega \in \Lambda_{2n}} \sum_{\bq}'\phi_{\omega,\bq}  \E[H_{\omega}]+ \O(N^{n-1})  \label{Eqn: Simplification 3} \\ 
& =  \frac{1}{4nN}\sum_{\omega \in \Lambda_{2n}}\sum_{\bq}'\phi_{\omega,\bq} \E[H_{q_0q_1}H_{q_1q_2}H_{\omega}] + \O(N^{n-1})  \label{Eqn: Simplification 4} .
\end{align}
To see why this is the case, first note that Lemma \ref{Lem: Regular tournament expectation} implies $\sum_{\omega \in \Lambda^{\circ}_{2n}}\E[H_{\omega}] = \O(N^{\Phi})$, where $\Phi = \max_{\omega \in \Lambda^{\circ}_{2n}}\{|V_{\omega}| - |F_{\omega}|/2\}$ (see Section \ref{Sec: Graph theoretical tools} for the definition of $F_{\omega}$ - the set of free edges), with the contribution of $\O(N^{|V_{\omega}}|)$ coming from the number of possibilities of labelling the vertices in $\omega$. Let us consider those $\omega$ in which every edge is traversed at most twice (all other cycles will give a negligible contribution in comparison) and form the graph $\hat{G} = (V_{\omega},E_{\omega})$. Since $\omega$ is a cycle the graph $\hat{G}$ is connected and satisfies $2|E_{\omega}| - |F_{\omega}| = 2n$, with the first Betti number $\beta(\hat{G}) = |E_{\omega}| - |V_{\omega}| + 1$. Thus $|V_{\omega}| - |F_{\omega}|/2 = n +1 - \beta(\hat{G})$. Now $\beta(\hat{G}) >0$, otherwise the $\omega$ would be backtracking. In addition, suppose $\beta(\hat{G}) = 1$, then $\hat{G}$ must be a loop (there can be no dangling edges since $\omega$ is non-backtracking), however this is only possible for walks $\omega$ in which $|F_{\omega}| = 2n$ or $|F_{\omega}| = 0$, which means $\omega \notin \Lambda^{\circ}_{2n}$. Hence $\beta(\hat{G}) \geq 2$ and therefore $|V_{\omega}| - |F_{\omega}|/2 = n - 1$, meaning the second term in (\ref{Eqn: Simplification 1}) is of order $\O(N^{n-1})$.

In addition, for all $\omega \in \Lambda^{\star}_{2n}$ we have
\begin{equation}\label{Eqn: triangle summation 1}
\sum'_{\bq} \phi_{\omega,\bq} = 12n N  + \O(1).
\end{equation}
This comes from counting all triangles $\bq$ that share a single edge with $\omega$. If we fix, for instance, $(q_0,q_1) = (p_0,p_1)$ (the first edge in $\omega$) then there are $N + \O(1)$ possible values for $q_2$ for which $\phi_{\omega,\bq} = 1$. Noting there are 6 possible orientations of $\bq$ for each edge of $\omega$ and $2n$ edges gives (\ref{Eqn: triangle summation 1}). Moreover, for all $\omega \in \Lambda^{\star}_{2n}$ we have $|F_{\omega}| = 2n$ and $|V_{\omega}| \leq 2n$, so $V_{\omega} - |F_{\omega}|/2 \leq n$ and thus $\sum_{\omega \in \Lambda^{\star}_{2n}} \E[H_{\omega}] = \O(N^n)$. Combing this with (\ref{Eqn: triangle summation 1}) lead to (\ref{Eqn: Simplification 2}).

Applying the same reasoning we deduce that $\frac{1}{12nN}\sum'_{\bq} \phi_{\omega,\bq} = \O(1)$ for all $\omega \in \Lambda^{\circ}_{2n}$, allowing us to extend the sum in (\ref{Eqn: Simplification 3}). Finally, since $\E[\delta Y_n] = \E[\E[\delta Y_n | H]] = 0$, taking the expectation in (\ref{Eqn: Expected Y change RITE}) gives the result in (\ref{Eqn: Simplification 4}). Inserting this into the remainder (\ref{Eqn: Drift remainder expression 1}) therefore gives
\begin{equation}
R_n(H) = \frac{1}{2}\frac{n}{(N-2)^n}[ S^{(1)}(H) + S^{(2)}_n(H)] + \O(N^{-1}),
\end{equation}
where
\begin{equation}\label{Eqn: First part expression}
S^{(1)}_n(H) = \sum_{\omega \in \Lambda_{2n}}H_{\omega}\bigg( 1 - \frac{1}{12nN}\sum_{\bq}'\phi_{\omega,\bq}\bigg)
\end{equation}
and
\begin{equation}\label{Eqn: Second part expression}
S^{(2)}_n(H) = \frac{1}{4nN}\sum_{\omega \in \Lambda_{2n}} \sum_{\bq}'\phi_{\omega,\bq} \bigg(H_{q_0q_1}H_{q_1q_2}H_{\omega} - \E[H_{q_0q_1}H_{q_1q_2}H_{\omega}]\bigg).
\end{equation}
Part \ref{Eqn: RITE drift} of Proposition \ref{Prop: RITE evolution} thus follows immediately from the triangle equality and the following lemma
\begin{lemma}\label{Lem: Drift remainder pre estimates} Let $S^{(1)}_n(H)$ and $S^{(2)}_n(H)$ be as defined in (\ref{Eqn: First part expression}) and (\ref{Eqn: Second part expression}) respectively and $\E$ denote the expectation of the RITE. Then
\begin{enumerate}
\item \label{Eqn: First part estimate} $\E|S^{(1)}(H)| = \O(N^{n-1})$
\item \label{Eqn: Second part estimate} $\E|S^{(2)}(H)| = \O(N^{n-1/2})$.
\end{enumerate}
\end{lemma}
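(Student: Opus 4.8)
The plan is to treat both parts by passing to second moments. Since $\overline{H_\omega}=H_\omega$ for a cycle of even length $2n$, each $H_\omega$ is real and equal to $\pm1$, so the elementary inequality $\E|X|\le\sqrt{\E[X^2]}$ can be applied with everything real. In each case I would expand the square, group the resulting cycles into equivalence classes $[\omega]$ (on which the relevant coefficients are constant), and reduce matters to maximising $|V_{\bar\omega,\bar\omega'}|-\tfrac12|F_{\bar\omega,\bar\omega'}|$ over relabelled copies $\bar\omega\sim\bar\omega'$. The three standing ingredients are: Lemma \ref{Lem: Regular tournament expectation}, which gives $\E[H_{\bar\omega,\bar\omega'}]=\O(N^{-|F_{\bar\omega,\bar\omega'}|/2})$; the count $\O(N^{|V_{\bar\omega,\bar\omega'}|})$ of labellings of a fixed unlabelled shape; and Corollary \ref{Cor: RITE vertex edge identity}, which bounds $|V_{\bar\omega,\bar\omega'}|-\tfrac12|F_{\bar\omega,\bar\omega'}|\le|V_{\bar\omega}|+\sum_i\delta_{\beta_i,0}$. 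Since the number of unlabelled shapes is $\O(1)$, the power of $N$ is governed entirely by this topological inequality.

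For part \ref{Eqn: First part estimate} I would split the sum defining $S^{(1)}_n$ into $\Lambda^\star_{2n}$ and $\Lambda^\circ_{2n}$. On $\Lambda^\star_{2n}$ the triangle count (\ref{Eqn: triangle summation 1}) gives the coefficient $1-\tfrac{1}{12nN}\sum'_{\bq}\phi_{\omega,\bq}=\O(N^{-1})$, while $G=(V_\omega,F_\omega)$ is a single $2n$-cycle with $\beta=1$ and no tree components, so Corollary \ref{Cor: RITE vertex edge identity} gives $|V_{\omega,\omega'}|-\tfrac12|F_{\omega,\omega'}|\le|V_\omega|\le2n$; the $\O(N^{-1})$ prefactor then converts $\sqrt{N^{2n}}=N^n$ into $\O(N^{n-1})$. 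On $\Lambda^\circ_{2n}$ the coefficient is only $\O(1)$, but a repeated edge forces $|F_\omega|\le2n-2$, and the argument behind (\ref{Eqn: Simplification 2}) (namely $\beta(\hat G)\ge2$) gives $|V_\omega|-\tfrac12|F_\omega|\le n-1$, hence $|V_\omega|\le2n-2$. The remaining point is to control the tree-component correction in Corollary \ref{Cor: RITE vertex edge identity}: since every vertex of $G=(V_\omega,F_\omega)$ is incident to an even number of free edges, the only tree components are isolated vertices (each such vertex having all its incident edges doubled), and a short count of the doubled edges keeps $|V_\omega|+\sum_i\delta_{\beta_i,0}\le2n-2$, again delivering $\sqrt{N^{2n-2}}=\O(N^{n-1})$.

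Part \ref{Eqn: Second part estimate} is the substantive one. Because $S^{(2)}_n$ has been centred, $\E[S^{(2)}_n]=0$, and I would bound $\E|S^{(2)}_n|\le\sqrt{\Var(S^{(2)}_n)}$, expanding the variance as a double sum over two cycle–triangle groups $(\omega_1,\bq_1)$ and $(\omega_2,\bq_2)$, weighted by $\phi_{\omega_1,\bq_1}\phi_{\omega_2,\bq_2}$ and by the covariance of $H_{q^{(1)}_0q^{(1)}_1}H_{q^{(1)}_1q^{(1)}_2}H_{\omega_1}$ with its partner. The constraint $\phi_{\omega_i,\bq_i}=1$ forces each triangle to share an edge with its cycle, contributing one extra free vertex $q^{(i)}_2$ and one extra free edge; it is precisely this extra vertex, not fully paid for through the $N^{-1/2}$ of Lemma \ref{Lem: Regular tournament expectation}, that degrades the exponent from $N^{n-1}$ to $N^{n-1/2}$. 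The prefactor is $(4nN)^{-2}=\O(N^{-2})$, so the target $\Var(S^{(2)}_n)=\O(N^{2n-1})$ amounts to showing the weighted covariance double sum is $\O(N^{2n+1})$.

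The hard part will be the covariance rather than the plain expectation. For groups sharing no edges the leading $\O(N^{-|F|/2})$ estimates of $\E[X_1X_2]$ and of $\E[X_1]\E[X_2]$ coincide and must cancel, so only the next-order correlation of the RITE survives and one cannot simply bound $|\mathrm{Cov}|$ by $\E|X_1X_2|$ there. I would therefore separate the double sum into (i) groups whose combined free-edge graph is connected, handled by the labelled-counting/Betti argument of part \ref{Eqn: First part estimate} but now with the two triangle vertices and their edges included, and (ii) groups that decouple, where the cancellation between $\E[X_1X_2]$ and $\E[X_1]\E[X_2]$ must be extracted from the finer structure underlying Lemma \ref{Lem: Regular tournament expectation} (equivalently from the integral representation of Appendix \ref{App: Expectation in RITE}). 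Verifying that case (ii) is genuinely of lower order, and that case (i) saturates at exponent $2n+1$ once the two $q_2$ vertices and the triangle edges are correctly counted against $F$, is the crux that yields $\O(N^{n-1/2})$.
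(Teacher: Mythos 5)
Your Part \ref{Eqn: First part estimate} is essentially the paper's proof: the same split of $\Lambda_{2n}$ into $\Lambda^{\star}_{2n}$ and $\Lambda^{\circ}_{2n}$, the same use of (\ref{Eqn: triangle summation 1}) to get $\kappa_{\omega}=\O(N^{-1})$ on $\Lambda^{\star}_{2n}$, and the same Betti-number/isolated-vertex bookkeeping via Corollary \ref{Cor: RITE vertex edge identity} on $\Lambda^{\circ}_{2n}$ (your parity observation that tree components of $(V_\omega,F_\omega)$ must be isolated vertices is exactly the paper's argument). Part \ref{Eqn: Second part estimate}, however, takes a genuinely different route --- $\E|S^{(2)}_n|\le\sqrt{\Var(S^{(2)}_n)}$ with a covariance expansion over pairs of triangle--cycle groups --- and here there is a real gap, precisely at the point you flag as the crux. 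Consider groups with $\omega\in\Lambda^{\dagger}_{2n}$ and the shared triangle edge being one of the two \emph{product} edges, say $(q_0,q_1)\in E_{\omega}$: then $(q_0,q_1)$ is doubled, so $|F_{\bq,\omega}|=2n$ while $|V_{\bq,\omega}|=2n+1$. For two vertex-disjoint such groups the trivial bound $|\mathrm{Cov}|\le|\E[H_{\bq_1,\omega_1}H_{\bq_2,\omega_2}]|+|\E[H_{\bq_1,\omega_1}]||\E[H_{\bq_2,\omega_2}]|=\O(N^{-2n})$ from Lemma \ref{Lem: Regular tournament expectation}, multiplied by the $\O(N^{4n+2})$ labellings, gives $\O(N^{2n+2})$ --- exceeding your target $\O(N^{2n+1})$ for the double sum. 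Worse, your proposed rescue via the integral representation of Appendix \ref{App: Expectation in RITE} is quantitatively insufficient as stated: the saddle-point analysis there carries relative error $\O(N^{-1/2+\epsilon})$ (cf.\ (\ref{No. RT})), which for disjoint groups yields at best $\mathrm{Cov}=\O(N^{-2n-1/2+\epsilon})$ and hence a total of $\O(N^{2n+3/2+\epsilon})$, still too large; one would need relative covariance decay of order $N^{-1}$, i.e.\ a genuinely second-order refinement of McKay's expansion that the paper does not contain.

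The paper avoids covariances altogether, and the missing idea in your proposal is the regularity identity $\sum_{q}H_{pq}=0$. Proposition \ref{Prop: Second part decomp} uses it to collapse exactly the problematic family above: the sum $\sum_{q}H_{p_{2n-1}q}$ over the free triangle vertex reduces to three boundary terms, rewriting $S^{(2)}_n$ in terms of explicit walk families $A_{2n}$, $B_{2n}$, $C_r$, $D_{2r}$ (see (\ref{Eqn: Walks A dfn})--(\ref{Eqn: Walks D dfn})) plus a constant $c_{n,N}=\O(N^n)$ that cancels identically against the subtracted mean in (\ref{Eqn: Second part expression}) --- this is the only role the centring plays. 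Each remaining family is then bounded by the \emph{first} absolute moment via $\E|X|\le\sqrt{\E[X^2]}$, with $\E[H_{\omega,\omega'}]$ estimated termwise by Lemma \ref{Lem: Regular tournament expectation} and Corollary \ref{Cor: RITE vertex edge identity}; no cancellation between joint moments and products of moments is ever invoked. Your intuition about the source of the exponent is nonetheless correct: the $\O(N^{n-1/2})$ comes from the family $A_{2n}$ with $|V_{\omega}|=2n+1$, $|F_{\omega}|=2n+2$, which after the $1/(4nN)$ prefactor gives $\O(N^{-1})\sqrt{\O(N^{2n+1})}=\O(N^{n-1/2})$. Without the regularity-driven decomposition, though, your variance route does not close.
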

\begin{proof}[Proof of Lemma \ref{Lem: Drift remainder pre estimates} Part \ref{Eqn: First part estimate}] Firstly, for notational convenience, let us write $\kappa_{\omega} := (1 - \frac{1}{12nN}\sum'_{\bq} \phi_{\omega,\bq})$. Splitting the sum over $\Lambda_{2n}$ in (\ref{Eqn: First part expression}) into $\Lambda^{\star}_{2n}$ and $\Lambda^{\circ}_{2n}$ leads to
\begin{multline}\label{Eqn: First part expectation expression}
\E|S^{(1)}_n(H)| \leq \sum_{[\omega] \in [\Lambda^{\star}_{2n}] } |\kappa_{[\omega]}|  \E\bigg|\sum_{\omega \in [\omega] } H_{\omega}  \bigg| +  \sum_{[\omega] \in [\Lambda^{\circ}_{2n}] } |\kappa_{[\omega]}|  \E\bigg|\sum_{\omega \in [\omega] } H_{\omega} \bigg| \\
\leq  \sum_{[\omega] \in [\Lambda^{\star}_{2n}] } |\kappa_{[\omega]}| \sqrt{\sum_{\omega,\omega' \in [\omega] } \E[H_{\omega,\omega'}]} +  \sum_{[\omega] \in [\Lambda^{\circ}_{2n}] } |\kappa_{[\omega]}| \sqrt{\sum_{\omega,\omega' \in [\omega] } \E[H_{\omega,\omega'}]},
\end{multline}
where we have used that $\kappa_{\omega} = \kappa_{\omega'}$ for all $\omega \sim \omega'$. For a particular equivalence class $[\omega]$, if $\Phi_{[\omega]} = \max_{\omega,\omega' \in [\omega]}\{ |V_{\omega,\omega'}| - |F_{\omega,\omega'}|/2$\} then, using Lemma \ref{Lem: Regular tournament expectation}, the quantity $\sum_{\omega,\omega' \in [\omega] } \E[H_{\omega,\omega'}]$ is of order $\O(N^{\Phi_{[\omega]}})$. 

For $\omega \in \Lambda^{\star}_{2n}$ we have $|F_{\omega}| = 2n$, meaning the graph $G = (V_{\omega},F_{\omega})$ will have one connected component and $\beta(G) \geq 1$. Therefore, by Corollary \ref{Cor: RITE vertex edge identity}, we find that for $\omega \sim \omega' \in [\Lambda^{\star}_{2n}]$, $|V_{\omega,\omega'}| - |F_{\omega,\omega'}|/2 \leq |V_{\omega}| \leq 2n$. In addition (\ref{Eqn: triangle summation 1}) implies that $\kappa_{[\omega]} = \O(1)$. Hence the first term in (\ref{Eqn: First part expectation expression}) is of order $\O(N^{-1})\sqrt{\O(N^{2n})} = \O(N^{n-1})$. 

For $\omega \in \Lambda^{\circ}_{2n}$ we have $0 < |F_{\omega}| < 2n$, which implies the graph $G = (V_{\omega},F_{\omega})$ will have multiple connected components, which we can label $i=1,\ldots,C$. However, since $\omega$ is a cycle, those components satisfying $\beta_i = 0$ must be isolated vertices. Let us suppose that all edges in $\omega$ are traversed a maximum of twice (more than twice will give lower order contributions), then the graph $\hat{G} = (V_{\omega},E_{\omega}) \supseteq G$ must be connected and satisfy $2|E_{\omega}| - |F_{\omega}| = 2n$. Now, if $|V_I| = \sum_i \delta_{\beta_i,0}$ counts the number of isolated vertices then we must have $|V_I| \leq |E_{\omega}|- |F_{\omega}| - 1$, since for $\omega \in \Lambda^{\circ}_{2n}$ the number of edges traversed twice (given by $|E_{\omega}|- |F_{\omega}|$) must be at least one more than the number of isolated vertices. Hence, $|V_I| \leq 2n - |E_{\omega}| - 1 = 2n - |V_{\omega}| - \beta(\hat{G})$. Thus using Corollary \ref{Cor: RITE vertex edge identity}, we have $|V_{\omega,\omega'}| - |F_{\omega,\omega'}|/2 \leq |V_{\omega}| + |V_I| = 2n - \beta(\hat{G}) \leq 2n - 2$ because $\beta(\hat{G}) \geq 2$ for all $\omega \in \Lambda^{\circ}_{2n}$. In addition $\kappa_{\omega} = O(1)$ for $\omega \in \Lambda^{\circ}_{2n}$ so the second term in (\ref{Eqn: First part expectation expression}) is of order $\O(1)\sqrt{\O(N^{2n-2})}  = \O(N^{n-1})$.
\end{proof}

\begin{proof}[Proof of Lemma \ref{Lem: Drift remainder pre estimates} Part \ref{Eqn: Second part estimate}]
Let us define the following sets of walks
\begin{align}
A_{r} & = \{(p_0,\ldots,p_{r-1},p_0,q,p_{r-1}): p_i \ \text{distinct}, q \neq p_0,p_1,p_{r-2},p_{r-1}\} \label{Eqn: Walks A dfn}\\
B_{r} & = \{(p_0,\ldots,p_{r-1},p_1): p_i \ \text{distinct} \} \label{Eqn: Walks B dfn} \\
C_{r} & = \{(p_0,\ldots,p_{r-1}): p_i \ \text{distinct} \} \label{Eqn: Walks C dfn} \\
D_{r} & = \{(p_0,\ldots,p_{r-1},q): p_i \ \text{distinct}, q= p_0,\ldots,p_{r-3}\} \label{Eqn: Walks D dfn}
\end{align}

\begin{figure}[ht]
\centerline{\includegraphics[width=0.2\textwidth]{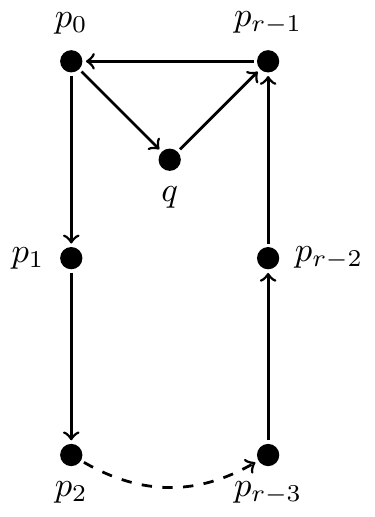}
\hspace{20pt}
\includegraphics[width=0.2\textwidth]{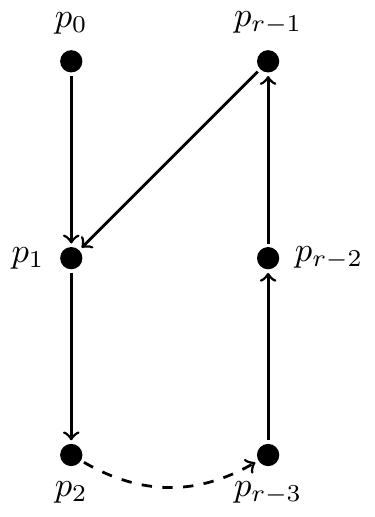}
\hspace{20pt}
\includegraphics[width=0.2\textwidth]{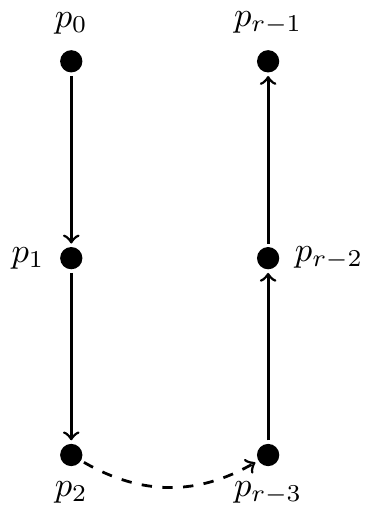}
\hspace{20pt}
\includegraphics[width=0.2\textwidth]{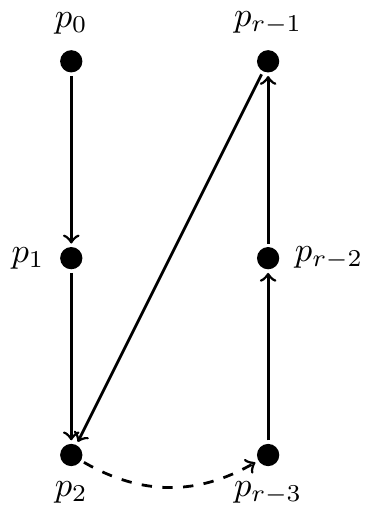}}
\centerline{(a) \hspace{100pt} (b) \hspace{100pt} (c) \hspace{100pt} (d)}
\caption{Examples of walks in (a) $A_r$, where $q$ may also be equal to $p_2,p_3,\ldots,p_{r-4}$, (b) $B_r$, (c) $C_r$ and (d) $D_r$, where the last vertex in the walk may be any of $p_0,\ldots,p_{r-3}$.}
\label{Fig: Examples of walks}
\end{figure}

\begin{proposition}\label{Prop: Second part decomp} Define $\Lambda^{\dagger}_{2n} : = \{\omega \in \Lambda_{2n} : |V_{
\omega}|  = 2n\}$ and $\Lambda^{\times}_{2n} = \Lambda_{2n} \setminus \Lambda^{\dagger}_{2n}$. Then, splitting the sum over $\Lambda_{2n}$ in (\ref{Eqn: Second part expression}) leads to
\begin{multline}\label{Eqn: Prop 1 statement}
S^{(2)}_n(H) = \frac{1}{4nN}\bigg[2n\sum_{\omega \in A_{2n}}H_{\omega} - 4n\sum_{\omega \in \Lambda^{\dagger}_{2n}}H_{\omega} - 4n\sum_{\omega \in B_{2n}} H_{\omega} 
+ 4n\sum_{j=1}^{n-2} \O(N^{j})\sum_{\omega \in D_{2n-2j}}H_{\omega} \\ 
+ \sum_{\omega \in \Lambda^{\times}_{2n}} \sum_{\bq}'\phi_{\omega,\bq} H_{q_0q_1}H_{q_1q_2}H_{\omega} \bigg] + \O(N^{n-1}).
\end{multline}
\end{proposition}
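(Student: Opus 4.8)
The plan is to treat the claimed decomposition as an \emph{exact algebraic rearrangement} of the double sum defining $S^{(2)}_n(H)$, obtained by splitting the outer sum $\sum_{\omega \in \Lambda_{2n}}$ into the simple cycles $\Lambda^{\dagger}_{2n}$ and the degenerate ones $\Lambda^{\times}_{2n}$. The contribution of $\Lambda^{\times}_{2n}$ is left untouched and becomes the final term. For $\omega \in \Lambda^{\dagger}_{2n}$, a genuine $2n$-cycle on $2n$ distinct vertices with every edge traversed once, the inner triangle sum is evaluated by a finite case analysis and its leading pieces matched to the walk families $A_{2n}$, $\Lambda^{\dagger}_{2n}$, $B_{2n}$ and $D_{2n-2j}$. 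Throughout I would use that re-inserting the subtracted expectation $\E[H_{q_0q_1}H_{q_1q_2}H_\omega]$ perturbs the identity by only $\O(N^{n-1})$: by Lemma \ref{Lem: Regular tournament expectation} together with the vertex counts below, each enumerated walk sum has expectation of that order, so one may work with the uncentred products.

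The structural input is that a simple $2n$-cycle is triangle-free for $n\ge 2$, so $\phi_{\omega,\bq}=1$ forces \emph{exactly one} of the three undirected triangle edges to lie on $\omega$ (the all-three case being impossible). I would then organise the triangle sum by the role of this shared edge relative to the factor $H_{q_0q_1}H_{q_1q_2}$. When the shared edge is the absent edge $(q_2,q_0)$ no matrix element is repeated, and $H_{q_0q_1}H_{q_1q_2}H_\omega$ is the product of a \emph{detour} walk; summing over the $2n$ cycle edges and the orientations, the fresh-apex part assembles into $2n\sum_{\omega\in A_{2n}}H_\omega$, while the part in which the apex already lies in $V_\omega$ loses a vertex and is pushed into the lower-order families. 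When instead the shared edge is one of the factor edges $(q_0,q_1)$ or $(q_1,q_2)$, the coincident element collapses via $H_{pq}^2=-H_{pq}H_{qp}$ and is deleted, leaving a single surviving factor edge grafted onto the path $\omega\setminus e$.

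The crucial point, and the \textbf{main obstacle}, is the bookkeeping of these grafted pendant terms. Taken with a fresh endpoint they would generate simple paths on $2n+1$ vertices, but I expect these to \emph{cancel in pairs} across the summation: the two orientations of the shared edge produce the same path with opposite signs (one yielding a repeated element, the other a conjugate pair $H_{pq}H_{qp}$), so no $C_{2n+1}$-type term survives and the leading $\O(N^{2n+1})$ behaviour is carried by $A_{2n}$ alone. What remains are the boundary corrections to this cancellation: forcing the grafted vertex to coincide with a cycle vertex reconstitutes, with the sign from the collapse, the closed walks on $2n$ vertices, giving the residues $-4n\sum_{\omega\in\Lambda^{\dagger}_{2n}}H_\omega$ and $-4n\sum_{\omega\in B_{2n}}H_\omega$; iterating such coincidences $j$ times reduces the vertex count to $2n-2j$ and, with the $\O(N^{j})$ combinatorial weight counting how the collapses occur, produces $\sum_{j=1}^{n-2}\O(N^{j})\sum_{\omega\in D_{2n-2j}}H_\omega$. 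Checking that the signs and the integer multiplicities ($2n$ versus $4n$) emerge exactly as stated, and that the pairwise cancellation is complete up to these identified residues, is the delicate part.

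Finally I would collect every configuration not captured above, namely triangles meeting $\omega$ in higher-multiplicity edges or with two or more further vertex coincidences, and bound them. Using Corollary \ref{Cor: RITE vertex edge identity} to control $|V_{\omega,\omega'}|-|F_{\omega,\omega'}|/2$ and Lemma \ref{Lem: Regular tournament expectation} to convert edge multiplicities into powers of $N^{-1/2}$, each such leftover contributes at most $\O(N^{n-1})$ to $\tfrac{1}{4nN}[\,\cdots\,]$, which is precisely the error term displayed in (\ref{Eqn: Prop 1 statement}). This completes the reduction to the estimates carried out afterwards in Lemma \ref{Lem: Drift remainder pre estimates}.
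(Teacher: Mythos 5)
Your overall frame --- splitting $\Lambda_{2n}$ into $\Lambda^{\dagger}_{2n}$ and $\Lambda^{\times}_{2n}$, observing that a simple $2n$-cycle can share exactly one triangle edge with $\bq$, and case-splitting on whether that edge is the absent edge $(q_2,q_0)$ or one of the factor edges $(q_0,q_1),(q_1,q_2)$ --- matches the paper's proof, as does leaving the $\Lambda^{\times}_{2n}$ contribution untouched. The genuine gap is your claimed pairwise cancellation of the grafted pendant terms. It fails. Writing $e=(p_i,p_{i+1(2n)})$ for the shared factor edge, the four orientation terms collapse (via $H_e^2=-1$, $H_eH_{e^{-1}}=+1$) to $2H_{p_iq}-2H_{p_{i+1(2n)}q}$ times the open path $H_{\omega}/H_e$: the pendants sit at \emph{different} vertices, so there is no pointwise cancellation, and when you reverse the traversal of the (odd-length) path to compare the two families, the sign flip of $H$ under reversal makes them add \emph{constructively}. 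This is exactly why the paper's intermediate identity (\ref{Eqn: Prop 1 first sum expression}) carries the open-path sum $8n\sum' H_{p_0p_1}\cdots H_{p_{2n-2}p_{2n-1}}H_{p_{2n-1}q}$ with a nonvanishing coefficient. Nor can these surviving path sums simply be bounded and discarded: a path is a tree ($\beta=0$), so Corollary \ref{Cor: RITE vertex edge identity} together with Lemma \ref{Lem: Regular tournament expectation} only yields an estimate of order $\sqrt{\O(N^{2n+2})}=\O(N^{n+1})$ for them, i.e.\ $\O(N^{n})$ after the $1/(4nN)$ prefactor, which would ruin the bound $\O(N^{n-1/2})$ needed in Lemma \ref{Lem: Drift remainder pre estimates} Part \ref{Eqn: Second part estimate}.

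The missing ingredient is the regularity of $H$, which your sketch never invokes in the combinatorial reduction. The paper resolves the free apex sum \emph{exactly} using $\sum_{q}H_{pq}=0$, i.e.\ $\sum_{q\neq p_0,p_1,p_{2n-2}}H_{p_{2n-1}q}=-(H_{p_{2n-1}p_0}+H_{p_{2n-1}p_1}+H_{p_{2n-1}p_{2n-2}})$, which is what actually produces the residues $-\Lambda^{\dagger}_{2n}$ and $-B_{2n}$ plus an open sum over $C_{2n-1}$; iterating the same row-sum identity on the free endpoint of $C_{2r-1}$ telescopes through the $D_{2n-2j}$ families down to the constant $\sum_{\omega\in C_3}H_{\omega}=-N(N-1)$. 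This also undermines your preliminary claim that one may work with uncentred products because the subtracted expectation perturbs by only $\O(N^{n-1})$: the terminal constant $c_{n,N}$ produced by this telescoping is deterministic and of order $\O(N^{n})$, and it is removed only because the identical constant appears in $\E[H_{q_0q_1}H_{q_1q_2}H_{\omega}]$ and cancels exactly; the centring is essential, not a uniformly negligible correction. In short, your mechanism (cancellation plus boundary coincidences) would be the right shape for an ensemble where such terms vanish for free, but for the RITE the proposition is true precisely \emph{because} of the constraint $\sum_q H_{pq}=0$, and a proof that never uses it cannot be repaired at the bookkeeping level.
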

Given Proposition \ref{Prop: Second part decomp} we have by the triangle inequality
\begin{multline}\label{Eqn: S2 triangle inequality}
\E|S^{(2)}_n(H)| \leq \O(N^{-1})\bigg[  \E\bigg|\sum_{\omega \in A_{2n}}H_{\omega}\bigg| + \E\bigg|\sum_{\omega \in \Lambda^{\dagger}_{2n}}H_{\omega}\bigg|  + \E\bigg|\sum_{\omega \in B_{2n}} H_{\omega} \bigg|
+ \sum_{j=1}^{n-2} \O(N^{j}) \E\bigg|\sum_{\omega \in D_{2n-2j}}H_{\omega}\bigg| \\ + \E\bigg|\sum_{\omega \in \Lambda^{\times}_{2n}} \sum_{\bq}'\phi_{\omega,\bq} H_{q_0q_1}H_{q_1q_2}H_{\omega}\bigg| \bigg]+ \O(N^{n-1}).
\end{multline}
The result is then obtained by showing all the terms within the square brackets are at most of order $\O(N^n)$.

We start with walks $\omega \in A_{2n}$. As before, we note that
\begin{equation}\label{Eqn: A walks contribution}
\E\bigg|\sum_{\omega \in A_{2n}}H_{\omega}\bigg| \leq \sum_{[\omega] \in [A_{2n}]} \sqrt{\sum_{\omega,\omega' \in [\omega]}\E[H_{\omega,\omega'}]}.
\end{equation}
From Corollary \ref{Cor: RITE vertex edge identity} we have, for all $\omega \sim \omega' \in A_{2n}$, that $|V_{\omega,\omega'}| - |F_{\omega,\omega'}|/2 \leq |V_{\omega}| \leq 2n+1$, giving a contribution of order $\sqrt{\O(N^{2n+1})} = \O(N^{n + 1/2})$. Similarly, taking the same inequality for walks $\omega \in D_{2n-2j}$ and noting that $|V_{\omega,\omega'}| - |F_{\omega,\omega'}|/2 \leq |V_{\omega}| = 2n - 2j$ for all $\omega \sim \omega' \in D_{2n-2j}$ leads to a contribution $\E|\sum_{\omega \in D_{2n-2j}}H_{\omega} | = \O(N^{n-j})$. Finally the inclusions $B_{2n},\Lambda^{\dagger}_{2n} \subset D_{2n}$, immediately imply the respective terms in (\ref{Eqn: S2 triangle inequality}) are also of order $O(N^{n})$.

\vspace{10pt}

It thus remains to estimate the term involving walks in $\Lambda^{\times}_{2n}$. Let us define $\tilde{\Lambda}^{\times}_{2n} := \{(\bq,\omega) : q_0 \neq q_1 \neq q_2 \neq q_0, \omega \in \Lambda^{\times}_{2n}, \phi_{\omega,\bq} =1 \}$ and $H_{\bq,\omega} := H_{q_0q_1}H_{q_1q_2}H_{\omega}$. Examples of (collections of) walks in $\tilde{\Lambda}^{\times}_{2n}$ are given in Figure \ref{Fig: Examples of other walks}. The final term in (\ref{Eqn: S2 triangle inequality}) is less than or equal to
\begin{equation}\label{Eqn: Final contr of second part decomp}
\sum_{[\bq,\omega] \in [\tilde{\Lambda}^{\times}_{2n}]} \sqrt{\sum_{(\bq,\omega), (\bq',\omega') \atop \in [\bq,\omega]} \E[H_{\bq,\omega}H_{\bq',\omega'}]}.
\end{equation}
If we write $V_{\bq} = \{q_0,q_1,q_2\}$ and $E_{\bq} = \{(q_0,q_1),(q_1,q_2)\}$ then $V_{\bq,\omega}$, $E_{\bq,\omega}$ and $F_{\bq,\omega}$ are defined in the usual manner. Let us define the graph $\hat{G} = (V_{\bq,\omega},E_{\bq,\omega})$ and assume that edges in $E_{\bq,\omega}$ are traversed a maximum of twice, meaning $2|E_{\bq,\omega}| - |F_{\bq,\omega}| = 2n + 2$. Note that $\hat{G}$ must be connected. Let $V_I$ be the set of isolated vertices in $G = (V_{\bq,\omega},F_{\bq,\omega})$. For example, in Figure \ref{Fig: Examples of other walks} (a) $V_I = \{p_{2n-1}\}$ since edges $(p_{2n-1},p_0), (p_1,p_{2n-1}) \notin F_{\bq,\omega}$, whereas there are no isolated vertices in Figure \ref{Fig: Examples of other walks} (b) and (c).

The condition $\phi_{\omega,\bq} =1 $ implies that $\omega$ and $\bq$ must share an odd number of edges, i.e. $|E_{\omega} \cap \{(q_0,q_1),(q_1,q_2),(q_2,q_0)\}| = 1 \mod(2)$. This leads to two scenarios: Either $|E_{\omega} \cap E_{\bq}| \neq \emptyset$ (see e.g Figure \ref{Fig: Examples of other walks} (a) and (c)) or $E_{\omega} \cap E_{\bq} = \emptyset$ (see e.g. Figure \ref{Fig: Examples of other walks} (b)).

In the first scenario, since at least one edge in $E_{\bq,\omega}$ must be traversed twice, the number of isolated vertices satisfies $|V_I| \leq |E_{\bq,\omega}| - |F_{\bq,\omega}| - 1$.
Therefore, $|V_I| \leq 2n + 2 - |E_{\bq,\omega}| - 1 = 2n + 2 - |V_{\bq,\omega}| - \beta(\hat{G})$. By Corollary \ref{Cor: RITE vertex edge identity} we therefore have 
$|V_{\bq,\omega,\bq',\omega'}| - |F_{\bq,\omega,\bq',\omega'}|/2 \leq 2n + 2 - \beta(\hat{G}) \leq 2n$, since $\omega \in \Lambda^{\times}_{2n}$ implies that the subgraph traced out by $\omega$ must have at least two fundamental cycles and thus $\beta(\hat{G}) \geq 2$. 

In the second scenario it is possible that all edges are free (i.e. traversed only once) in $(\bq,\omega)$ meaning $|E_{\bq,\omega}| - |F_{\bq,\omega}| \geq 0$. The number of isolated vertices must therefore satisfy $|V_I| \leq |E_{\bq,\omega}| - |F_{\bq,\omega}|$, which, via Corollary \ref{Cor: RITE vertex edge identity}, leads to the inequality $|V_{\bq,\omega,\bq',\omega'}| - |F_{\bq,\omega,\bq',\omega'}|/2  \leq 2n + 3 - \beta(\hat{G})$. However, in contrast to the first scenario, the union of $E_{\bq}$ and $E_\omega$ means additional fundamental cycles must be added, i.e. $\beta(\hat{G}) \geq 3$, implying that $|V_{\bq,\omega,\bq',\omega'}| - |F_{\bq,\omega,\bq',\omega'}|/2 \leq 2n$ again. Thus (\ref{Eqn: Final contr of second part decomp}) and, in turn, the final term in (\ref{Eqn: S2 triangle inequality}) is of order $\O(N^n)$.

\begin{figure}[ht]
\centerline{\includegraphics[width=0.22\textwidth]{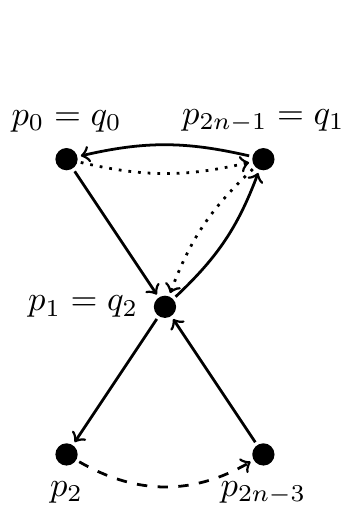}
\hspace{30pt}
\includegraphics[width=0.22\textwidth]{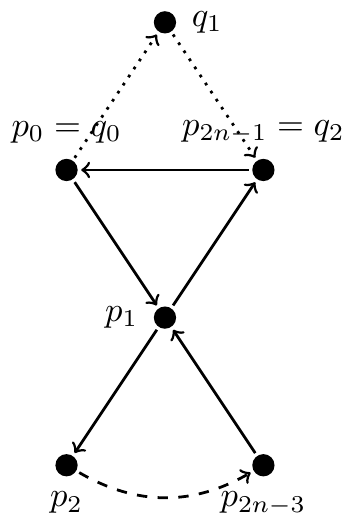}
\hspace{30pt}
\includegraphics[width=0.22\textwidth]{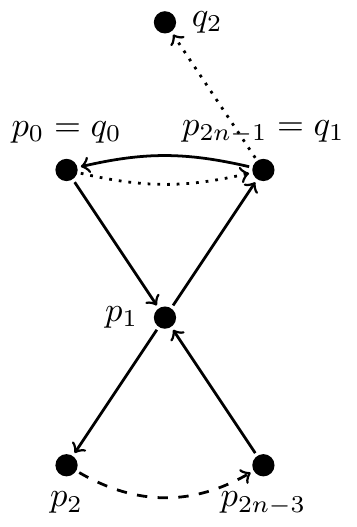}}
\centerline{(a) \hspace{120pt} (b) \hspace{120pt} (c) \hspace{10pt}}
\caption{Examples of walks $(\bq,\omega) \in \tilde{\Lambda}^{\times}_{2n}$ in which there are either (a) 3 edges of $\bq$ contained in $\omega$ and (b) and (c) 1 edge of $\bq$ contained in $\omega$.}
\label{Fig: Examples of other walks}
\end{figure}

\end{proof}

\begin{proof}[Proof of Proposition \ref{Prop: Second part decomp}]The idea will be to split the sum over $\Lambda_{2n}$ into those sets $\Lambda^{\dagger}_{2n}$ and $\Lambda^{\times}_{2n}$, then show the sum over $\Lambda^{\dagger}_{2n}$ can be re-expressed in the form of the first four terms in (\ref{Eqn: Prop 1 statement}), up to a correction of $\O(N^{-1})$. Let us start with the first term in (\ref{Eqn: Second part expression}), since $|V_{\omega}| = 2n$ for all $\omega \in \Lambda^{\dagger}_{2n}$, the condition $\phi_{\omega,\bq} =1$ means that $\bq$ and $\omega$ must share a single edge. Therefore writing out explicitly all those terms in which the edge $(p_i,p_{i+1(2n)}) = (q_j, q_{j+1(3)}$ or $(q_{j+1(3)}, q_j)$ for $i=0,\ldots, 2n-1$, $j =0,1,2$ gives
\begin{multline}\label{Eqn: Prop 1 first sum expression}
\sum_{\omega \in \Lambda^{\dagger}_{2n}}\sum_{\bq}'\phi_{\omega,\bq}H_{q_0q_1}H_{q_1q_2}H_{\omega} = \sum'_{p_0,\ldots,p_{2n-1}} H_{\omega} \sum_{i=0}^{2n-1}\sum_{q \notin \mathcal{P}_i} (H_{p_i q}H_{qp_{i+1(2n)}} + H_{p_{i+1(2n)}q}H_{qp_i})   \\
+  \sum'_{p_0,\ldots,p_{2n-1}} H_{\omega} \sum_{i=0}^{2n-1}\sum_{q \notin \mathcal{P}_i} (H_{p_i p_{i+1(2n)}}H_{p_{i+1(2n)}q} + H_{p_{i+1(2n)}p_i }H_{p_{i}q} + H_{qp_i} H_{p_i p_{i+1(2n)}} + H_{q p_{i+1(2n)}} H_{p_{i+1(2n)} p_i} )  \\
= 4n\sum_{\omega \in A_{2n}}H_{\omega} + 8n\sum_{p_0,\ldots,p_{2n-1}}' \sum_{q \neq p_0,p_1,p_{2n-2}} H_{p_0p_1} \ldots H_{p_{2n-2}p_{2n-1}}H_{p_{2n-1}q},
\end{multline}
where $\mathcal{P}_i = \{p_{i-1(2n)},p_i,p_{i+1(2n)},p_{i+2(2n)}\}$ and $A_{2n}$ is defined in (\ref{Eqn: Walks A dfn}).

The second term in (\ref{Eqn: Prop 1 first sum expression}) may be further modified by using the regularity $H$, i.e.
\begin{multline}\label{Eqn: Extended sum over p and q}
\sum_{p_0,\ldots,p_{2n-1}}' \sum_{q \neq p_0,p_1,p_{2n-3}} H_{p_0p_1} \ldots H_{p_{2n-2}p_{2n-1}}H_{p_{2n-1}q} \\
= - \sum_{p_0,\ldots,p_{2n-1}}' H_{p_0p_1} \ldots H_{p_{2n-2}p_{2n-1}}(H_{p_{2n-1}p_0}  + H_{p_{2n-1}p_1} + H_{p_{2n-1}p_{2n-2}}) \\
= - \sum_{\omega \in \Lambda^{\dagger}_{2n}} H_{\omega} - \sum_{\omega \in B_{2n}}H_{\omega} - (N-(2n-1))\sum_{\omega \in C_{2n-1}}H_{\omega},
\end{multline}
with $B_{2n}$ and $C_{2n-1}$ given in (\ref{Eqn: Walks A dfn}) and (\ref{Eqn: Walks B dfn}) respectively. The summation of walks in $C_{2n-1}$ is obtained by noting that we have $H_{p_{2n-2}p_{2n-1}}H_{p_{2n-1}p_{2n-2}} = 1$ and summing over the free variable $p_{2n} \neq p_0,\ldots, p_{2n-2}$.

To proceed further we note that a similar application of the regularity condition can be applied to the index $p_{2n-2}$ 
\begin{multline}
\sum_{\omega \in C_{2n-1}}H_{\omega} := \sum_{p_0,\ldots,p_{2n-3}}'\sum_{p_{2n-2} \neq p_0,\ldots,p_{2n-3}} H_{p_0p_1}\ldots H_{p_{2n-4}p_{2n-3}}  H_{p_{2n-3}p_{2n-2}} \\
= - \sum'_{p_0,\ldots,p_{2n-3}}\sum_{q = p_0,\ldots,p_{2n-4}} H_{p_0p_1}\ldots H_{p_{2n-3}q} - \sum_{p_0,\ldots,p_{2n-3}}' H_{p_0p_1}\ldots H_{p_{2n-4}p_{2n-3}}H_{p_{2n-3}p_{2n-4}} \\
= - \sum_{\omega \in D_{2n-2}} H_{\omega} - (N-(2n-3))\sum_{\omega \in C_{2n-3}}H_{\omega},
\end{multline}
with $D_{2n-2}$ given in (\ref{Eqn: Walks D dfn}).

Applying the same method to $C_{2n-3}$ and so forth leads to 
\begin{multline}
\sum_{\omega \in C_{2n-1}}H_{\omega} = -\sum_{\omega \in D_{2n-2}} H_{\omega} + \sum_{j=2}^{n-2}(-1)^j \left(\prod_{r=1}^{j-1}(N-(2n-1) - 2r)\right)\sum_{\omega \in D_{2n-2j}}H_{\omega} \\ 
+ \  (-1)^n \left(\prod_{r=1}^{n-2}(N- (2n-1) - 2r)\right)\sum_{\omega \in C_3} H_{\omega}.
\end{multline}
Moreover, due to the regularity of $H$, the sum over $C_3$ is constant
\[
\sum_{\omega \in C_3} H_{\omega} = \sum'_{p_0,p_1}\sum_{p_2 \neq p_0,p_1} H_{p_0p_1}H_{p_1p_2} = \sum'_{p_0,p_1} (-1) = -N(N-1),
\]
which gives
\begin{equation}\label{Eqn: Sum over C_2n-1}
\sum_{\omega \in C_{2n-1}}H_{\omega} =  \sum_{j=1}^{n-2} \O(N^{j-1}) \sum_{\omega \in D_{2n-2j}}H_{\omega} 
- \  (-1)^n N\left(\prod_{r=1}^{n-1}(N- (2n-1) - 2r)\right).
\end{equation}
Therefore, inserting (\ref{Eqn: Sum over C_2n-1}) into (\ref{Eqn: Extended sum over p and q}) and then (\ref{Eqn: Extended sum over p and q}) into (\ref{Eqn: Prop 1 first sum expression}) leads to the following expression
\begin{multline}\label{Eqn: Prop 1 basic quantity}
\sum_{\omega \in \Lambda^{\dagger}_{2n}}\sum_{\bq}'\phi_{\omega,\bq}H_{q_0q_1}H_{q_1q_2}H_{\omega} = 2n\sum_{\omega \in A_{2n}}H_{\omega} - 4n\sum_{\omega \in \Lambda^{\dagger}_{2n}}H_{\omega} - 4n\sum_{\omega \in B_{2n}} H_{\omega} \\
+ 4n(N-(2n-1))\left[\sum_{j=1}^{n-2} \O(N^{j-1})\sum_{\omega \in D_{2n-2j}}H_{\omega} + c_{n,N}\right],
\end{multline}
with the constant $c_{n,N}$ given by the second term in the right hand side of (\ref{Eqn: Sum over C_2n-1}). Importantly, this constant is of order $\O(N^n)$, which would lead to a larger result in Lemma \ref{Lem: Drift remainder pre estimates} Part \ref{Eqn: Second part estimate}, however by subtracting the expectation of the same quantity, as in (\ref{Eqn: Second part expression}) this leading order is removed. Hence, inserting (\ref{Eqn: Prop 1 basic quantity}) into (\ref{Eqn: Second part expression}) gives
\begin{multline}
S_2(H) = \frac{1}{4nN}\bigg[2n\sum_{\omega \in A_{2n}}H_{\omega} - 4n\sum_{\omega \in \Lambda^{\dagger}_{2n}}H_{\omega} - 4n\sum_{\omega \in B_{2n}} H_{\omega} 
+ 4n \sum_{j=1}^{n-2} \O(N^{j})\sum_{\omega \in D_{2n-2j}}H_{\omega}  \\
 - 2n\sum_{\omega \in A_{2n}}\E[H_{\omega}] + 4n\sum_{\omega \in \Lambda^{\dagger}_{2n}}\E[H_{\omega}]  + 4n\sum_{\omega \in B_{2n}} \E[H_{\omega}] 
- 4n \sum_{j=1}^{n-2} \O(N^{j})\sum_{\omega \in D_{2n-2j}}\E[H_{\omega}] \\
+\sum_{\omega \in \Lambda^{\times}_{2n}}\sum_{\bq}'\phi_{\omega,\bq}H_{q_0q_1}H_{q_1q_2}H_{\omega}  - \sum_{\omega \in \Lambda^{\times}_{2n}}\sum_{\bq}'\phi_{\omega,\bq}\E[H_{q_0q_1}H_{q_1q_2}H_{\omega}] \bigg].
\end{multline}
The result is therefore obtained once we show all terms involving expectations are at most $\O(N^n)$. We start with $D_{2n-2j}$. In this case each walk $\omega$ has $|V_{\omega}| = |F_{\omega}| = 2n-2j$ and therefore $|V_{\omega}| - |F_{\omega}|/2 = n-j$. Hence
\[
\sum_{\omega \in D_{2r}}\E[H_{\omega} ] = \O(N^{n-j}).
\]
The same holds for $B_{2n}$ and $\Lambda^{\dagger}_{2n}$ since they are both contained in $D_{2n}$. For walks in $A_{2n}$ we have $|V_{\omega}| \leq 2n+1$ and $|F_{\omega}| = 2n+2$, giving $|V_{\omega}| - |F_{\omega}|/2 \leq n$ and so the same result follows.

In the final term the walk $\omega$ must share at least one edge with $\bq$ due to the condition $\phi_{\omega,\bq}$. As in the proof of Lemma \ref{Lem: Drift remainder pre estimates} Part \ref{Eqn: Second part estimate} let us take those $(\bq,\omega) \in \tilde{\Lambda}^{\times}_{2n}$ such that every edge is traversed at most twice and write the connected graph $\hat{G} = (V_{\bq,\omega},E_{\bq,\omega})$, then $2n + 2 = 2|E_{\bq,\omega}| - |F_{\bq,\omega}|$. Hence, $|V_{\bq,\omega}| - |F_{\bq,\omega}|/2 = |V_{\bq,\omega}| - |E_{\bq,\omega}| + n + 1 = n + 2 - \beta(\hat{G}) \leq n$, since $\beta(\hat{G}) \geq 2$ for all $(\bq,\omega) \in \tilde{\Lambda}^{\times}_{2n}$. Therefore
\[
\sum_{\omega \in \Lambda^{\times}_{2n}}\sum_{\bq}'\phi_{\omega,\bq}\E[H_{q_0q_1}H_{q_1q_2}H_{\omega}] \ \  = \sum_{(\bq,\omega) \in \tilde{\Lambda}^{\times}_{2n}} \E[H_{\bq,\omega}] = \O(N^n),
\]
which completes the result.
\end{proof}

\subsection{Proof of Proposition \ref{Prop: RITE evolution} Part \ref{Eqn: RITE diffusion} - Diffusion term}\label{Sec: RITE Diffusion}

For the diffusion term we insert the expression (\ref{Eqn: deltaY RITE}) for $\delta Y^{\bq}$ into the definition (\ref{Eqn: RITE expected change multiple variables}) for the expected change of multiple variables to obtain, via Lemma \ref{Lem: Indicator simplification}, 
\begin{eqnarray}
\E[\delta Y_n \delta Y_m | H] & = & \frac{1}{d_N}\sum_{\bq} \Theta_{\bq}(H) \delta Y^{\bq}_n \delta Y^{\bq}_m \nonumber \\
& = &  \frac{1}{(N-2)^{n+m}}\frac{1}{4d_N} \sum_{\omega_1 \in \Lambda_{2n}}\sum_{\omega_2 \in \Lambda_{2m}}H_{\omega_1,\omega_2} \sum'_{\bq}(1 - 3H_{q_0q_1}H_{q_1q_2}) \phi_{\omega_1,\bq}\phi_{\omega_2,\bq} \nonumber \\
& = & \frac{6N}{d_N}[2n^2\delta_{nm} + R_{nm}(H)]. \label{Eqn: Diffusion first equation}
\end{eqnarray}
Let us treat the cases $n = m$ and $n \neq m$ separately. Starting with the former, we define $\Gamma^{\star}_{2n} := \{ (\omega_1, \omega_2) \in \Lambda^{\star}_{2n} \times \Lambda^{\star}_{2n} : \omega_1 \cong \omega_2\}$ and $\Gamma^{\circ}_{2n} = (\Lambda_{2n} \times \Lambda_{2n})\setminus \Gamma^{\star}_{2n}$, exactly as in Section \ref{Sec: ITE Diffusion term} for the ITE. For $(\omega_1,\omega_2) \in \Gamma^{\star}_{2n}$ we have $H_{\omega_1,\omega_2} = 1$ and $\phi_{\omega_1,\bq} = \phi_{\omega_2,\bq}$ for all $\bq$. Therefore,
\begin{multline}\label{Eqn: Rnn expression}
R_{nn}(H) = \frac{1}{(N-2)^{2n}}\frac{1}{24N} \sum_{(\omega_1,\omega_2) \in \Gamma^{\star}_{2n}} \sum'_{\bq}(1 - 3H_{q_0q_1}H_{q_1q_2}) \phi_{\omega_1,\bq}^2 \\
+  \frac{1}{(N-2)^{2n}}\frac{1}{24N} \sum_{(\omega_1,\omega_2) \in \Gamma^{\circ}_{2n}}H_{\omega_1,\omega_2} \sum'_{\bq}(1 - 3H_{q_0q_1}H_{q_1q_2}) \phi_{\omega_1,\bq}\phi_{\omega_2,\bq} - 2n^2.
\end{multline}
Now, $\phi_{\omega,\bq}^2 = \phi_{\omega,\bq}$ and (\ref{Eqn: triangle summation 1}) gives $\sum_{\bq}'\phi_{\omega,\bq} = 12nN + \O(1)$ if $\omega \in \Lambda^{\star}_{2n}$. Furthermore, for every fixed $\omega_1 \in \Lambda^{\star}_{2n}$ there are $4n$ possible walks $\omega_2$ such that $\omega_1 \cong \omega_2$. Therefore, counting the number of ways of labelling the vertices in $\omega_1$ leads to $|\Gamma^{\star}_{2n}| = 4n (N^{2n} + \O(N^{2n-1}))$. Inserting these observations gives
\[
\frac{1}{(N-2)^{2n}}\frac{1}{24N} \sum_{(\omega_1,\omega_2) \in \Gamma^{\star}_{2n}} \sum'_{\bq} \phi_{\omega_1,\bq}^2 = \frac{n}{2(N-2)^{2n}} |\Gamma^{\star}_{2n}|(1 + \O(N^{-1})) = 2n^2 + \O(N^{-1}).
\]
In addition, for a fixed $\bq$ we have $\sum_{\omega \in \Lambda^{\star}_{2n}}\phi_{\omega,\bq} = \O(N^{2n-2})$ since if $\omega$ contains, say, the edge $(q_0,q_1)$ there are $|V_{\omega}|-2 \leq 2n-2$ remaining vertices that can be labelled. Hence, using the regularity of $H$,
\begin{multline}\label{Eqn: Rnn leading contribution}
\frac{1}{(N-2)^{2n}}\frac{1}{24N} \sum_{(\omega_1,\omega_2) \in \Gamma^{\star}_{2n}} \sum'_{\bq} 3H_{q_0q_1}H_{q_1q_2} \phi_{\omega_1,\bq} = \frac{1}{(N-2)^{2n}}\frac{12 n}{24N}  \sum_{\omega \in \Lambda^{\star}_{2n}} \sum'_{\bq}H_{q_0q_1}H_{q_1q_2} \phi_{\omega,\bq} \\
= \O(N^{-3})\sum'_{q_0,q_1}\sum_{q_2 \neq q_0,q_1}H_{q_0q_1}H_{q_1q_2}  = \O(N^{-3})\sum'_{q_0,q_1}H_{q_0q_1}H_{q_1q_0}  = \O(N^{-1}).
\end{multline}
The remainder (\ref{Eqn: Rnn expression}) therefore reduces to
\begin{equation}\label{Eqn: Rnn Simplified expression}
R_{nn}(H) = \frac{1}{(N-2)^{2n}}\frac{1}{24N}\sum_{(\omega_1,\omega_2) \in \Gamma^{\circ}_{2n}}H_{\omega_1,\omega_2} \sum'_{\bq}(1 - 3H_{q_0q_1}H_{q_1q_2}) \phi_{\omega_1,\bq}\phi_{\omega_2,\bq} + \O(N^{-1}).
\end{equation}
Let us define $\tilde{\Gamma}^{\circ}_{2n} := \{(\bq,\omega_1,\omega_2) : q_0 \neq q_1 \neq q_2 \neq q_0, (\omega_1,\omega_2) \in \Gamma^{\circ}_{2n}, \phi_{\omega_1,\bq}\phi_{\omega_2,\bq} = 1\}$ and $H_{\bq,\omega_1,\omega_2} = H_{q_0q_1}H_{q_1q_2}H_{\omega_1}H_{\omega_2}$ as usual. Similarly let $[\bq,\omega_1,\omega_2]$ be the equivalence class of labellings of the vertices of $(\bq,\omega_1,\omega_2)$ and $[\tilde{\Gamma}^{\circ}_{2n}]$ the set of such equivalence classes. Then showing that $\E|R_{nn}(H)| = \O(N^{-1})$ reduces to showing the following is of order $\O(N^{2n})$.
\begin{multline}\label{Eqn: Rnn other contribution}
\E\bigg|\sum_{(\omega_1,\omega_2) \in \Gamma^{\circ}_{2n}}H_{\omega_1,\omega_2}  \sum'_{\bq}(1 - 3H_{q_0q_1}H_{q_1q_2}) \phi_{\omega_1,\bq}\phi_{\omega_2,\bq}  \bigg| \\
\leq
\sum_{[\omega_1,\omega_2] \in [\Gamma^{\circ}_{2n}]} \alpha_{[\omega_1,\omega_2]}  \sqrt{\sum_{(\omega_1,\omega_2),(\omega'_1,\omega'_2) \atop \in [\omega_1,\omega_2]} \E[H_{\omega_1,\omega_2,\omega'_1,\omega'_2}] }
+ \sum_{[\bq,\omega_1,\omega_2] \in [\tilde{\Gamma}^{\circ}_{2n}]} 3\sqrt{ \sum_{(\bq,\omega_1,\omega_2),(\bq',\omega'_1,\omega'_2) \atop \in [\bq,\omega_1,\omega_2] } \E[ H_{\bq,\omega_1,\omega_2,\bq',\omega'_1,\omega'_2}]},
\end{multline}
where $\alpha_{\omega_1,\omega_2} = \sum'_{\bq} \phi_{\omega_1,\bq}\phi_{\omega_2,\bq}$ is the same for all $(\omega_1,\omega_2) \in [\omega_1,\omega_2]$.

We start with the first term in (\ref{Eqn: Rnn other contribution}). For a fixed $(\omega_1,\omega_2)$ $\alpha_{\omega_1,\omega_2} \neq 0$ only if $\omega_1$ and $\omega_2$ are connected, i.e. $|V_{\omega_1}\cap V_{\omega_2}| > 0$. In this case there are two scenario that we must consider. Firstly, $|E_{\omega_1}\cap E_{\omega_2}| = 0$ (no edges are shared) and secondly $|E_{\omega_1}\cap E_{\omega_2}| \geq 1$ (at least one edge is shared). In the first scenario we have $\alpha_{\omega_1,\omega_2} = \O(1)$, since for a fixed $(\omega_1,\omega_2)$ the quantity $\phi_{\omega_1,\bq}\phi_{\omega_2,\bq} \neq 0$ only if $V_{\bq} \subset V_{\omega_1,\omega_2}$ and $|V_{\omega_1,\omega_2}| = \O(1)$. In the second scenario we have $\alpha_{\omega_1,\omega_2} = \O(N)$, since the sharing of an edge between $\omega_1$ and $\omega_2$ only demands that two out of the three vertices $q_0,q_1$ and $q_2$ are fixed, summing over the non-fixed vertex then gives a contribution $\O(N)$.

We therefore want to evaluate the contribution of the quantity inside the square-root of the first term in (\ref{Eqn: Rnn other contribution}) for these two scenarios. To begin, as before, let us define $G = (V_{\omega_1,\omega_2},F_{\omega_1,\omega_2})$ and $\hat{G} = (V_{\omega_1,\omega_2},E_{\omega_1,\omega_2})$. Assuming edges are traversed a maximum of twice in $(\omega_1,\omega_2) \in 
\Gamma^{\circ}_{2n}$ means $4n = 2|E_{\omega_1,\omega_2}| - |F_{\omega_1,\omega_2}|$. For all $(\omega_1,\omega_2)$ such that $\alpha_{\omega_1,\omega_2} > 0$ the subgraph $\hat{G}$ is connected. Also, since $\omega_1$ and $\omega_2$ are non-backtracking cycles, if any of the various connected components of the subgraph $G = (V_{\omega_1,\omega_2},F_{\omega_1,\omega_2})$ have $\beta_i=0$ they must be isolated vertices, as in the Section \ref{Sec: RITE Drift}.

In the first scenario it may be the case that $|F_{\omega_1,\omega_2}| = 4n$ and so the number of isolated vertices satisfies $|V_I| \leq |E_{\omega_1,\omega_2}| - |F_{\omega_1,\omega_2}|$. Hence $|V_I| \leq 4n - |E_{\omega_1,\omega_2}| = 4n - |V_{\omega_1,\omega_2}| - \beta(\hat{G}) + 1$ and so, by Corollary \ref{Cor: RITE vertex edge identity} we have $|V_{\omega_1,\omega_2,\omega'_1,\omega'_2}| - |F_{\omega_1,\omega_2,\omega'_1,\omega'_2}|/2 \leq |V_{\omega_1,\omega_2}| + |V_I| \leq 4n -1$, as $\beta(\hat{G}) \geq 2$ for all $(\omega_1,\omega_2) \in \Gamma^{\circ}_{2n}$. Combining with the contribution of $\alpha_{\omega_1,\omega_2}$ gives a contribution $\O(1)\sqrt{\O(N^{4n-1})} = \O(N^{2n - 1/2})$.

In the second scenario we must have $|F_{\omega_1,\omega_2}| \leq 4n - 2$ as at least one edge must be traversed twice in $(\omega_1,\omega_2)$. This means $|V_I| \leq |E_{\omega_1,\omega_2}| - |F_{\omega_1,\omega_2}| -1 \leq 4n -  |V_{\omega_1,\omega_2}| - \beta(\hat{G})$. Moreover we cannot have $\beta(\hat{G}) = 1$, as this is only possible if $\omega_1$ and $\omega_2$ are single loops and satisfy $\omega_1 \cong \omega_2$, however this would mean $(\omega_1,\omega_2) \notin \Gamma^{\circ}_{2n}$. Thus, by Corollary \ref{Cor: RITE vertex edge identity} we have $|V_{\omega_1,\omega_2,\omega'_1,\omega'_2}| - |F_{\omega_1,\omega_2,\omega'_1,\omega'_2}|/2 \leq |V_{\omega_1,\omega_2}| + |V_I| \leq 4n - 2$. Combining this with $\alpha_{\omega_1,\omega_2}$   leads to a contribution $\O(N)\sqrt{\O(N^{4n-2})} = \O(N^{2n})$ and hence the first term in (\ref{Eqn: Rnn other contribution}) is of order $\O(N^{2n})$.

We now turn our attention to the second term in (\ref{Eqn: Rnn other contribution}). For those $[\bq,\omega_1,\omega_2] \in [\tilde{\Gamma}^{\circ}_{2n}]$ we define the graph $\hat{G} = (V_{\bq,\omega_1,\omega_2},E_{\bq,\omega_1,\omega_2})$. If each of the edges are traversed at most twice then we have $2|E_{\bq,\omega_1,\omega_2}| - |F_{\bq,\omega_1,\omega_2}| = 4n + 2$. Moreover, since there must be a least one edge that is traversed twice we find the number of isolated vertices satisfies $|V_I| \leq |E_{\bq,\omega_1,\omega_2}| - |F_{\bq,\omega_1,\omega_2}| - 1 = 4n + 2 - |V_{\bq,\omega_1,\omega_2}| - \beta(\hat{G})$. Thus, using that $\beta(\hat{G}) \geq 2$ and Corollary \ref{Cor: RITE vertex edge identity} we find $|V_{\bq,\omega_1,\omega_2,\bq',\omega'_1,\omega'_2}| - |F_{\bq,\omega_1,\omega_2,\bq',\omega'_1,\omega'_2}|/2 \leq 4n$ for all $(\bq,\omega_1,\omega_2) \sim (\bq',\omega'_1,\omega'_2) \in \tilde{\Gamma}^{\circ}_{2n}$. The contribution of the second term is therefore $\O(N^{2n})$, as desired.

\vspace{10pt}

It thus remains to evaluate $\E|R_{nm}(H)|$ for $n \neq m$. If we define $\tilde{\Gamma}_{2n,2m} = \{(\bq,\omega_1,\omega_2) : \omega_1 \in \Lambda_{2n}, \omega_2 \in \Lambda_{2m}, \phi_{\omega_1,\bq}\phi_{\omega_2,\bq} = 1\}$ then from (\ref{Eqn: Diffusion first equation}) we have
\begin{multline}
\E|R_{nm}(H)| \leq \frac{1}{(N-2)^{n+m}}\frac{1}{24 N} \bigg\{ \sum_{[\omega_1,\omega_2] \in [\Lambda_{2n} \times \Lambda_{2m}]} \alpha_{[\omega_1,\omega_2]}
\E\bigg| \sum_{(\omega_1,\omega_2)\in [\omega_1,\omega_2]} H_{\omega_1,\omega_2}\bigg| \\
+ \sum_{[\omega_1,\omega_2] \in [\tilde{\Gamma}_{2n,2m}]} 
\E\bigg| \sum_{(\bq,\omega_1,\omega_2) \in [\bq,\omega_1,\omega_2]} H_{\bq,\omega_1,\omega_2}\bigg| \bigg\}.
\end{multline}
For the first term we can use the same arguments as for $n=m$. In particular, if $\alpha_{\omega_1,\omega_2} >0$ then $\omega_1$ and $\omega_2$ are connected. If they do not share an edge (i.e. $|E_{\omega_1} \cap E_{\omega_2}| = 0$) then it is possible the number of non-free edges $|E_{\omega} \setminus F_{\omega}| = 0$. Hence $|V_I| \leq |E_{\omega}| - |F_{\omega}|$, which in turn implies (assuming edges are not traversed more than twice by $(\omega_1,\omega_2)$) $|V_{\omega_1,\omega_2}| + |V_I| \leq 2n + 2m + 1 - \beta(\hat{G}) \leq 2n + 2m -1$, as $\beta(\hat{G}) \geq 2$. Moreover, $\alpha_{\omega_1,\omega_2} = \O(1)$ if $|E_{\omega_1} \cap E_{\omega_2}| = 0$ so the contribution is of order $\O(1)\sqrt{\O(N^{2n + 2m-1})} = \O(N^{n+m - 1/2})$. Alternatively, if $\omega_1$ and $\omega_2$ share at least one edge (i.e. $|E_{\omega_1} \cap E_{\omega_2}| =>0$) then we must have $|V_I| \leq |E_{\omega}| - |F_{\omega}| -1 $, which in turn implies $|V_{\omega_1,\omega_2}| + |V_I| \leq 2n + 2m - \beta(\hat{G}) \leq 2n + 2m - 2$, as $\beta(\hat{G}) \geq 2$. Therefore, since $\alpha_{\omega_1,\omega_2} = \O(N)$ in this case, we attain a contribution of $\O(N)\sqrt{\O(N^{2n + 2m-2})} = \O(N^{n+m})$ for the first term.

Similarly for the second term we write $\hat{G} = (V_{\bq,\omega_1,\omega_2},E_{\bq,\omega_1,\omega_2})$ and, assuming the edges are traversed at most twice, we get $|V_{\bq,\omega_1,\omega_2}| + |V_{I}| \leq 2n + 2m + 2 - \beta(\hat{G})$. Thus Corollary \ref{Cor: RITE vertex edge identity} means we have $|V_{\bq,\omega_1,\omega_2,\bq',\omega'_1,\omega'_2}| - |F_{\bq,\omega_1,\omega_2,\bq',\omega'_1,\omega'_2}|/2 \leq 2n + 2m$, as $\beta(\hat{G}) \geq 2$. Therefore the contribution of the second term is of order $\O(N^{n+m})$ also.

Finally noting that there is a factor of order $O(N^{-n-m-1})$ means that $\E|R_{nm}(H)| = \O(N^{-1})$.

\subsection{Proof of Proposition \ref{Prop: RITE evolution} Part \ref{Eqn: RITE remainder} - Remainder term}\label{Sec: RITE Remainder}
For the remainder term we again insert the expression (\ref{Eqn: deltaY RITE}) for $\delta Y^{\bq}$ into the definition (\ref{Eqn: RITE expected change multiple variables}) so that, using Lemma \ref{Lem: Indicator simplification}, we obtain
\begin{align*}
\E[|\delta Y_n \delta Y_m \delta Y_l | |H] & = \frac{1}{d_N}\sum'_{\bq}\Theta_{\bq}(H) \left| \delta Y^{\bq}_n \delta Y^{\bq}_m \delta Y^{\bq}_l \right| \\
& \leq  \frac{1}{(N-2)^{n+m+l}}\frac{1}{4d_N}\sum'_{\bq} |(1 - 3H_{q_0q_1}H_{q_1q_2})| \bigg|  \sum_{(\omega_1 \omega_2 \omega_3) \atop \in \Lambda_{2n} \times \Lambda_{2m} \times \Lambda_{2l}}H_{\omega_1,\omega_2,\omega_3}\phi_{\omega_1,\bq}\phi_{\omega_2,\bq}\phi_{\omega_3,\bq}\bigg|. 
\end{align*}
Therefore, given that $|(1 - 3H_{q_0q_1}H_{q_1q_2})| \leq 4$ we find
\begin{equation}\label{Eqn: RITE Remainder evaluation}
\E|R_{nml}(H)| = \frac{d_N}{6 N}\E[|\delta Y_n \delta Y_m \delta Y_l | |H] 
\leq \frac{1}{(N-2)^{n+m+l}}\frac{1}{6N}\sum'_{\bq} \E\left|\sum_{(\omega_1,\omega_2,\omega_3) \in \Gamma^{\bq}_{2n,2m,2l}} H_{\omega_1,\omega_2,\omega_3}\right|,
\end{equation}
where $\Gamma^{\bq}_{2n,2m,2l} = \{(\omega_1,\omega_2,\omega_3) \in \Lambda_{2n} \times \Lambda_{2m} \times \Lambda_{2l} : \phi_{\omega_1,\bq}\phi_{\omega_2,\bq}\phi_{\omega_3,\bq} = 1\}$. Using the standard inequality for the expectation means we must compute the quantity
\begin{equation}\label{Eqn: RITE Remainder evaluation extra}
\E\bigg|\sum_{(\omega_1,\omega_2,\omega_3) \in \Gamma^{\bq}_{2n,2m,2l}} H_{\omega_1,\omega_2,\omega_3}\bigg| \leq \sum_{[\omega_1,\omega_2,\omega_3]  \in [\Gamma^{\bq}_{2n,2m,2l}]} \sqrt{\sum_{(\omega_1,\omega_2,\omega_3),(\omega'_1,\omega'_2,\omega'_3) \atop \in [\omega_1,\omega_2,\omega_3]} \E[H_{\omega_1,\omega_2,\omega_3,\omega'_1,\omega'_2,\omega'_3}]}.
\end{equation}
The condition $\phi_{\omega_1,\bq}\phi_{\omega_2,\bq}\phi_{\omega_3,\bq}=1$ imposes the restriction that $\omega_1, \omega_2$ and $\omega_3$ must share an odd number of edges with $\bq$. Let us restrict ourselves to those $(\omega_1,\omega_2,\omega_3) \in \Lambda^{\dagger}_{2n} \times \Lambda^{\dagger}_{2m} \times \Lambda^{\dagger}_{2l}$ (i.e. $|V_{\omega_1}| = 2n$ etc.) as this maximises the number of vertices and therefore gives the main contribution to (\ref{Eqn: RITE Remainder evaluation}). Note that for all $\omega \in \Lambda^{\dagger}_{2n}$, $\phi_{\bq,\omega} = 1$ if and only if $|E_{\omega} \cap \{(q_0,q_1),(q_1,q_2),(q_2,q_0)\}| = 1$. There are (up to the relabelling of vertices) three scenarios 
\begin{enumerate}
\item [(i)] $|E_{\omega_1} \cap \{(q_0,q_1)\}| = |E_{\omega_2} \cap \{(q_1,q_2)\}| = |E_{\omega_3} \cap \{(q_2,q_0)\}| = 1$
\item [(ii)] $|E_{\omega_1} \cap \{(q_0,q_1)\}| = |E_{\omega_2} \cap \{(q_0,q_1)\}| = |E_{\omega_3} \cap \{(q_1,q_2)\}| = 1$
\item [(iii)] $|E_{\omega_1} \cap \{(q_0,q_1)\}| = |E_{\omega_2} \cap \{(q_0,q_1)\}| = |E_{\omega_3} \cap \{(q_0,q_1)\}| = 1$
\end{enumerate}
In the first scenario we assume that each edge is traversed at most twice by $(\omega_1,\omega_2,\omega_3)$. Then, following the same arguments as Sections \ref{Sec: RITE Drift} and \ref{Sec: RITE Diffusion}, the number of isolated vertices $|V_I| \leq |E_{\omega_1,\omega_2,\omega_3}| - |F_{\omega_1,\omega_2,\omega_3}| \leq 2n + 2m + 2l - |V_{\omega_1,\omega_2,\omega_3}| + 1 - \beta(\hat{G})$, where $\hat{G} = (V_{\omega_1,\omega_2,\omega_3},E_{\omega_1,\omega_2,\omega_3})$. By construction we must have $\beta(\hat{G}) \geq 4$ and thus $|V_{(\omega_1,\omega_2,\omega_3)}| + |V_I| \leq 2m + 2n + 2l - 3$.

In the second scenario we also assume that each edge is traversed at most twice by $(\omega_1,\omega_2,\omega_3)$. However at least one edge (given by $(q_0,q_1)$) must be traversed twice, which means the number of isolated vertices satisfies $|E_{\omega_1,\omega_2,\omega_3}| - |F_{\omega_1,\omega_2,\omega_3}| - 1$ (it always at least one less than  the number of non-free edges). Therefore we find $|V_{\omega_1,\omega_2,\omega_3}| + |V_I| \leq 2n + 2m + 2l - \beta(\hat{G}) = 2n + 2m + 2l - 3$, since we must have $\beta(\hat{G}) \geq 3$, with equality occurring when $\omega_1 \cong \omega_2$ (which only happens when $n = m$).

Using Corollary \ref{Cor: RITE vertex edge identity} we determine that in Scenarios (i) and (ii)
\[
|V_{\omega_1,\omega_2,\omega_3,\omega'_1,\omega'_2,\omega'_3}| - \frac{|F_{\omega_1,\omega_2,\omega_3,\omega'_1,\omega'_2,\omega'_3}|}{2} \leq 2n + 2m + 2l - 3.
\]
However all three vertices $q_0,q_1$ and $q_2$ are fixed for a particular $\Gamma^{\bq}_{2n,2m,2l}$ so the contribution to (\ref{Eqn: RITE Remainder evaluation extra}) is of order $\sqrt{\O(N^{2n + 2m + 2l - 3 - 3})} = \O(N^{n+m+l - 3})$.

In the final scenario the edge $(q_0,q_1)$ is traversed 3 times. We assume that all others are traversed at most twice, which implies that $2n + 2m + 2l = 2|E_{\omega_1,\omega_2,\omega_3}| - |F_{\omega_1,\omega_2,\omega_3}| + 2$. Now, either $\omega_i \cong \omega_j$ for some $i\neq j = 1,2,3$ or not. Let us suppose the former case arises (we can say $\omega_1 \cong \omega_2$ for instance) then, because at least one edge is traversed twice, the number of isolated vertices is given by $|V_I| = |E_{\omega_1,\omega_2,\omega_3}| - |F_{\omega_1,\omega_2,\omega_3}| - 1$, which means $|V_{\omega_1,\omega_2,\omega_3}| + |V_I| = 2n + 2m + 2l - \beta(\hat{G}) - 2 \leq 2n + 2m + 2l - 4$, since $\beta(\hat{G}) \geq 2$ in this case. 

Alternatively, if $\omega_i \not\cong \omega_j$ for any $i\neq j$ then $|V_I| = |E_{\omega_1,\omega_2,\omega_3}| - |F_{\omega_1,\omega_2,\omega_3}|$, but we must have $\beta(\hat{G}) \geq 3$, which means again, $|V_{\omega_1,\omega_2,\omega_3}| + |V_I| \leq 2n + 2m + 2l - \beta(\hat{G}) - 1 \leq 2n + 2m + 2l - 4$. Corollary \ref{Cor: RITE vertex edge identity} therefore implies that for Scenario (iii) we have 
\[
|V_{\omega_1,\omega_2,\omega_3,\omega'_1,\omega'_2,\omega'_3}| - \frac{|F_{\omega_1,\omega_2,\omega_3,\omega'_1,\omega'_2,\omega'_3}|}{2} \leq 2n + 2m + 2l - 4.
\]
However this time there are only two vertices of $\bq$ contained in $V_{\omega_1,\omega_2,\omega_3}$, which means the contribution to (\ref{Eqn: RITE Remainder evaluation extra}) is of order $\sqrt{\O(N^{2n + 2m + 2l - 4 - 2})} = \O(N^{n+m+l - 3})$ once again.

Returning to (\ref{Eqn: RITE Remainder evaluation}) and noting that the sum over $\bq$ gives a contribution of $\O(N^3)$ means that $\E|R_{nml}(H)| = \O(N^{-1})$, as desired.

\section{Conclusions}\label{Sec: Conclusion}
We have used a combination of appropriate random walks and Stein's method to provide rates of convergence for the traces of random Bernoulli ensembles derived from both tournaments and regular tournaments. Specifically we have shown that under this random walk the traces, in a basis of Chebyshev polynomials, behave like independent Ornstein-Uhlenbeck processes in the limit of large matrix size. Subsequently, this allows to use the results of Chatterjee \& Meckes \cite{Chatterjee-2008}, Reinert \& R\"{o}llin \cite{Reinert-2009} and Meckes \cite{Meckes-2009}, regarding the multivariate version of the exchangeable pairs mechanism for Stein's method, in order to obtain rates of convergence to an appropriate Gaussian distribution. In particular, we are able to obtain these results using combinatorial methods, closely related to previous calculations for showing distributional convergence, but without explicit rates (see e.g. \cite{Schenker-2007,Sodin-2017}). Moreover, this approach only requires estimates involving third order moments to show distributional convergence.

We would like to finish with a couple of comments. Firstly, we note that in the bound for the distributional distance (\ref{Eqn: RITE distributional distance}) of the RITE in Theorem \ref{Thm: RITE theorem}, the first term is of order $\O(N^{-1/2})$. This comes from a single set of walks, arising due to the regularity of the matrix $H$ (see the proof of Lemma \ref{Lem: Drift remainder pre estimates} Part (b)). It is not clear whether this can be improved to $\O(N^{-1})$ in order to match the corresponding result in Theorem \ref{Thm: ITE theorem} for the ITE. Secondly, we believe the results could be easily applied to other types of matrix ensembles such as Wigner matrices, or tournaments with different score sequences. For Wigner matrices the random walk would be very similar - one may choose a matrix element at random and then resample from the appropriate distribution. However Lemma \ref{Lem: Non-backtracking} is not immediately applicable and would therefore have to be amended. Although results in this direction have already been achieved \cite{Feldheim-2010}. For tournaments with different score sequences similar random walks to the RITE have already been analysed \cite{Kannan-1997} and the number of such tournaments have been asymptotically estimated \cite{McKay-1996}, expanding on the technques developed by McKay for regular tournaments \cite{McKay-1990}, which suggests a result akin to Lemma \ref{Lem: Regular tournament expectation} would also be possible.

\subsection*{Acknowledgements} The authors are grateful to Prof. Sasha Sodin for numerous discussions and useful comments regarding this work. CHJ would also like to acknowledge the Leverhulme Trust for financial support (ECF-2014-448)

\appendix

\section*{Appendix}

\section{Stein solution}\label{Sec: Stein solution}
 
Since we detail a slightly different (and more specific) version of Theorem \ref{Thm: Stein theorem} to that of Theorem 2.1 in \cite{Reinert-2009} and Theorem 3 in \cite{Meckes-2009} we have decided to include a short proof for the aid of the reader. In particular, \cite{Reinert-2009,Meckes-2009} allow for a multivariate Gaussian distribution with general covariance matrix $\Sigma$, which we have decided to specify to our situation for clarity. Moreover, the bounds in \cite{Reinert-2009} are also in terms of the derivatives $\| \nabla^{j} \phi \|$ but of an order one more than presented here. This realisation that the order can be reduced by one through integration by parts (see Lemma \ref{Lem: Function bounds}) is presented in \cite{Meckes-2009} but this 
is done with a more complicated type of function bound and so we keep with derivatives of the form $\| \nabla^{j} \phi \|$ for simplicity. This does not provide any meaningful effect on our final result.

\begin{proposition}[Stein solution]\label{Prop: Stein sol}Let $\A$ be the operator given in (\ref{Eqn: Stein operator}). Then the solution to Stein's equation (\ref{Eqn: Stein equation}) is given by
\[
f(X) = \int_0^{\infty} dt \int dZ' \ P(X \to Z' ; t) \phi(Z') = \int_0^{\infty} dt \int d\mu(Z) \ \phi(\tilde{X}(X,Z;t)) \ ,
\]
where $P(X \to Z';t) = \prod_{n=2}^k P_n(X_n \to Z'_n;t)$,
\begin{equation}\label{Eqn: OU transition probability}
P_n(X_n \to Z'_n;t) := \frac{1}{\sqrt{2 \pi n} \chi_n(t)}\exp\left(-\frac{(X_ne^{-nt} - Z'_n)^2}{2n\chi_n(t)^2}\right) \ , \hspace{10pt} \chi_n(t)^2 = 1 - e^{-2nt},
\end{equation}
describes the evolution in the corresponding one-dimensional OU process for a fixed initial position $X_n$.

After a simple change of variables one obtains the second equality where $Z = (Z_2,\ldots,Z_k)$, $Z_n \sim N(0,n)$ are independent Gaussian random variables with $\mu(Z)$ the associated measure and $\tilde{X} = (\tilde{X}_2,\ldots,\tilde{X}_k)$ with $\tilde{X}_n(X,Z;t) = X_n e^{-nt} + \sqrt{1 - e^{-2nt}}Z_n$.
\end{proposition}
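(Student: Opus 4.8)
The plan is to recognise $f$ as the time-integral of the Markov semigroup generated by $\A$, and to verify Stein's equation (\ref{Eqn: Stein equation}) by the fundamental theorem of calculus. Since $\A = \sum_{n=2}^k \A_n$ with $\A_n := n^2\partial_{X_n}^2 - nX_n\partial_{X_n}$ acting only on the coordinate $X_n$, and the $\A_n$ commute, the associated semigroup factorises across coordinates. Each $\A_n$ is the generator of a one-dimensional Ornstein--Uhlenbeck process whose transition law, started at $X_n$, is Gaussian with mean $X_ne^{-nt}$ and variance $n\chi_n(t)^2 = n(1-e^{-2nt})$; this is exactly the kernel $P_n(X_n\to Z'_n;t)$ in (\ref{Eqn: OU transition probability}), and its stationary law is $N(0,n)$, matching $Z_n$. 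I would write $(T_t\phi)(X) := \int dZ'\, P(X\to Z';t)\phi(Z')$ with $P = \prod_n P_n$, so that the claimed $f$ is the integral $\int_0^\infty (T_t\phi)(X)\,dt$.

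First I would establish the second (pathwise) form of the statement by the change of variables $Z'_n = X_n e^{-nt} + \sqrt{1-e^{-2nt}}\,Z_n$. Under this substitution the Gaussian kernel $P_n(X_n\to Z'_n;t)\,dZ'_n$ becomes the centred law $d\mu(Z_n)$ of $Z_n\sim N(0,n)$, giving $(T_t\phi)(X) = \int d\mu(Z)\,\phi(\tilde X(X,Z;t))$ with $\tilde X_n = X_ne^{-nt}+\sqrt{1-e^{-2nt}}\,Z_n$, as asserted. This representation is the convenient one for all subsequent differentiation, because $\tilde X_n$ depends linearly and explicitly on both $X_n$ and $t$.

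The core step is to verify the backward equation $\partial_t(T_t\phi) = \A(T_t\phi)$. Differentiating under the Gaussian expectation gives $\partial_{X_n}(T_t\phi) = e^{-nt}\,\E_Z[\partial_n\phi(\tilde X)]$, $\partial_{X_n}^2(T_t\phi) = e^{-2nt}\,\E_Z[\partial_n^2\phi(\tilde X)]$, and $\partial_t(T_t\phi) = \sum_n \E_Z[\partial_n\phi(\tilde X)(-nX_ne^{-nt} + \tfrac{n e^{-2nt}}{\sqrt{1-e^{-2nt}}}Z_n)]$. The drift contribution matches $-\sum_n nX_n\partial_{X_n}(T_t\phi)$ term by term, so it remains to identify the $Z_n$-weighted term with the diffusion term. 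Here I would use Gaussian integration by parts (Stein's identity $\E[Z_n g(Z_n)] = n\,\E[g'(Z_n)]$ for $Z_n\sim N(0,n)$) applied to $g(Z_n)=\partial_n\phi(\tilde X)$, noting $\partial_{Z_n}\partial_n\phi(\tilde X) = \sqrt{1-e^{-2nt}}\,\partial_n^2\phi(\tilde X)$; the factors of $\sqrt{1-e^{-2nt}}$ cancel and one obtains exactly $n^2 e^{-2nt}\E_Z[\partial_n^2\phi(\tilde X)] = n^2\partial_{X_n}^2(T_t\phi)$. Summing over $n$ yields $\partial_t(T_t\phi)=\A(T_t\phi)$.

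Finally I would read off the boundary behaviour $T_0\phi = \phi$ (since $\tilde X\to X$ as $t\to0$) and $T_t\phi \to \E[\phi(Z)]$ as $t\to\infty$ (since $e^{-nt}\to0$ and $\tilde X_n\to Z_n$), the latter with exponential rate governed by the slowest mode $n=2$. Applying the fundamental theorem of calculus,
\[
\A f(X) = \int_0^\infty \A(T_t\phi)(X)\,dt = \int_0^\infty \partial_t(T_t\phi)(X)\,dt = \lim_{t\to\infty}(T_t\phi)(X) - (T_0\phi)(X) = \E[\phi(Z)] - \phi(X),
\]
which is Stein's equation (\ref{Eqn: Stein equation}). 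The main obstacle is purely analytic rather than conceptual: one must justify differentiating under the expectation and interchanging $\A$ with $\int_0^\infty dt$, and ensure convergence. Both are handled by the hypotheses $\phi\in C^2$ with the bounded norms $\|\nabla^j\phi\|$ of (\ref{Eqn: Sup deriv norm}): these control the integrand uniformly and, via the $\O(e^{-2t})$ decay of $(T_t\phi)(X)-\E[\phi(Z)]$, guarantee that the centred integral $\int_0^\infty[(T_t\phi)(X)-\E[\phi(Z)]]\,dt$ (which differs from the formal expression only by a constant annihilated by $\A$) is absolutely convergent.
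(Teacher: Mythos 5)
Your proposal is correct and follows essentially the same route as the paper: represent $f$ as the time-integral of the OU semigroup, use the backward equation $\partial_t(T_t\phi)=\A(T_t\phi)$, and conclude by the fundamental theorem of calculus with boundary values $T_0\phi=\phi$ and $T_t\phi\to\E[\phi(Z)]$. The only difference is that the paper simply cites the backward Fokker--Planck equation for the kernel (\ref{Eqn: OU transition probability}) from the literature, whereas you verify it directly via the pathwise representation and Gaussian integration by parts, and you additionally note the centring needed for absolute convergence of the $t$-integral --- a point the paper glosses over.
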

\begin{proof}
Let us write $\A:= \sum_{n=2}^k n\L_n $ with
\[
\L_n:= n\frac{\partial^2}{\partial Z'^2} - Z'\frac{\partial }{\partial Z'},
\]
then the solution to the backward Fokker-Planck equation $\partial_t P (Z';t) = \A P (Z';t)$ is well-known (see e.g. \cite{Wang-1945,Risken-1989}) and given by (\ref{Eqn: OU transition probability}). Therefore
\begin{multline}
\A f(X) =  \int_0^{\infty} dt \int dZ \partial_t P(X \to Z';t) \phi(Z') =  \int dZ' [P(X,Z';\infty) - P(X,Z';0)] \phi(Z') \\ 
=  \int dZ [P(Z) - \delta(X-Z)] \phi(Z)  = \E[\phi(Z)] - \phi(X).
\end{multline}
\end{proof}

\begin{remark}The operator $\A^*$, as mentioned in Section \ref{Sec: Random walks}, is the generator for the corresponding \emph{forward} Fokker-Planck equation, given by 
\begin{equation}\label{Eqn: FP eqn}
\frac{\partial P(X,t)}{\partial t} = [\A^*P](X;t) : =  \sum_{n=2}^k n \left[\frac{\partial(X_n P(X;t))}{\partial X_n} + n \frac{\partial^2 P(X;t)}{\partial X_n^2}\right].
\end{equation}
\end{remark}

\begin{lemma}\label{Lem: Function bounds}Let $f$ be connected to $\phi \in C^3(\R^{k-2})$ as in Proposition \ref{Prop: Stein sol}. Then
\begin{equation}\label{Eqn: Function bounds}
\left\| \nabla^j f \right\| \leq \frac{1}{\sqrt{\pi}}\frac{2^{j-3}\Gamma(\frac{k}{2})^2}{(k-1)!}\left\| \nabla^{j-1} \phi\right\|,
\end{equation}
with $\|\nabla ^j f \|$ and $\|\nabla^j \phi \|$ defined in (\ref{Eqn: Sup deriv norm}).
\end{lemma}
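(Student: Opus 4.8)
The plan is to differentiate the explicit representation of $f$ furnished by Proposition \ref{Prop: Stein sol} directly under the integral sign, exploiting the fact that the flow $\tilde X_n(X,Z;t) = X_n e^{-nt} + \sqrt{1-e^{-2nt}}\,Z_n$ is \emph{affine} in both $X$ and $Z$. Since $\partial \tilde X_n/\partial X_m = e^{-nt}\delta_{nm}$, the chain rule gives
\[
\frac{\partial^j f}{\partial X_{n_1}\cdots\partial X_{n_j}} = \int_0^\infty\!dt\,\Big(\prod_{i=1}^j e^{-n_i t}\Big)\int d\mu(Z)\,\frac{\partial^j \phi}{\partial \tilde X_{n_1}\cdots\partial\tilde X_{n_j}}(\tilde X),
\]
so that a naive estimate would only recover a bound in terms of $\|\nabla^j\phi\|$. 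The gain of one derivative, anticipated in the discussion opening this appendix, comes from trading one $\tilde X$-derivative for a $Z$-derivative and integrating by parts against the Gaussian measure.

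Concretely, because $\partial\tilde X_n/\partial Z_n = \sqrt{1-e^{-2nt}}$, I would write $\partial_{\tilde X_{n_1}}\phi = (1-e^{-2n_1 t})^{-1/2}\partial_{Z_{n_1}}\phi(\tilde X)$ and integrate by parts once in $Z_{n_1}$ against the density $d\mu(Z)\propto\prod_n e^{-Z_n^2/2n}\,dZ_n$. The boundary terms vanish, since the Gaussian decay dominates the at-most-polynomial growth of the derivatives of $\phi$, and differentiating the density produces the factor $Z_{n_1}/n_1$; this leaves exactly $j-1$ derivatives acting on $\phi$:
\[
\frac{\partial^j f}{\partial X_{n_1}\cdots\partial X_{n_j}} = \int_0^\infty\!dt\,\frac{e^{-n_1 t}}{\sqrt{1-e^{-2n_1 t}}}\Big(\prod_{i=2}^j e^{-n_i t}\Big)\int d\mu(Z)\,\frac{Z_{n_1}}{n_1}\,\frac{\partial^{j-1}\phi}{\partial \tilde X_{n_2}\cdots\partial\tilde X_{n_j}}(\tilde X),
\]
where the factors $\sqrt{1-e^{-2n_i t}}$ with $i\geq 2$, produced when the surviving $Z$-derivatives are converted back to $\tilde X$-derivatives, cancel the matching denominators.

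It then remains to estimate. Bounding the $(j-1)$-st derivative of $\phi$ by $\|\nabla^{j-1}\phi\|$ and each surviving $\sqrt{1-e^{-2n_i t}}$ by $1$, the problem decouples into two elementary ingredients: the Gaussian absolute moment $\E|Z_{n_1}| = \sqrt{2n_1/\pi}$, whence $\E|Z_{n_1}|/n_1 = \sqrt{2/(\pi n_1)}$, and the time integral
\[
\int_0^\infty\frac{e^{-n_1 t}}{\sqrt{1-e^{-2n_1 t}}}\Big(\prod_{i=2}^j e^{-n_i t}\Big)dt = \frac{1}{n_1}\int_0^1\frac{u^{(n_2+\cdots+n_j)/n_1}}{\sqrt{1-u^2}}\,du,
\]
obtained by the substitution $u=e^{-n_1 t}$; the right-hand side is a Beta integral, reducing to an $\arcsin$ when the exponent vanishes. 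Taking the supremum over $Q$ and the maximum over the admissible indices $n_1,\dots,n_j\in\{2,\dots,k\}$ then yields a constant of the stated type, and matching it to the closed form $\tfrac{1}{\sqrt\pi}2^{j-3}\Gamma(k/2)^2/(k-1)!$ is a matter of identifying the extremal index configuration and collapsing the resulting Beta integrals into the quoted $\Gamma$-quotient.

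The genuinely delicate points are the analytic justification of differentiating under the integral and of the single integration by parts, both of which hinge on uniform-in-$t$ control of $\phi$ and its derivatives along the flow together with the integrability near $t=0$ of $(1-e^{-2n_1 t})^{-1/2}$, which is exactly what the $\arcsin$/Beta evaluation confirms. I expect the main obstacle to be the constant-chasing itself: locating the worst-case choice of $n_1,\dots,n_j$ and evaluating the Beta integrals precisely, rather than merely producing an $\O(1)$ bound. The structural identities above are otherwise routine.
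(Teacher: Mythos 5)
Your proposal follows essentially the same route as the paper's proof: differentiate the OU-semigroup representation of $f$ under the integral, trade exactly one $\tilde{X}$-derivative for a $Z$-derivative and integrate by parts against the Gaussian density (yielding the factor $Z_{n}/n$ and hence the moment $\E|Z_n|=\sqrt{2n/\pi}$), then bound the time integral using the worst case $n_i \equiv 2$, where the paper evaluates $\int_0^\infty e^{-2jt}(1-e^{-4t})^{-1/2}\,dt = \tfrac{1}{2}\,2^{j-2}\Gamma(j/2)^2/(j-1)!$ --- precisely your Beta integral. The only point worth flagging is that the constant this argument actually produces involves $\Gamma(j/2)^2/(j-1)!$ rather than $\Gamma(k/2)^2/(k-1)!$, so the $k$ in the lemma statement is evidently a typo for $j$, and the ``extremal index configuration'' you defer to constant-chasing is simply $n_1=\cdots=n_j=2$.
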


\begin{proof}
We have, writing $d\mu(Z) = dZP(Z)$ and changing variables of the derivatives 
\begin{multline}
\frac{\partial^j f(X)}{\partial X_{n_1}\ldots \partial X_{n_j}}  =  \int_0^{\infty} dt \int dZ P(Z) \frac{\partial^j \phi(\tilde{X})}{\partial X_{n_1}\ldots \partial X_{n_j}} \\
 =  -\int_0^{\infty} dt \ \frac{e^{-(n_1 + \ldots +  n_j)t}}{\sqrt{1 - e^{-2n_j t}}}  \int dZ P(Z) \frac{\partial^j \phi(\tilde{X})}{\partial \tilde{X}_{n_1}\ldots \partial \tilde{X}_{n_{j-1}}\partial Z_{n_j}} \ ,
\end{multline}
where $n_i= 2,\ldots,k$. Integration by parts may therefore be performed on the $Z_{n_j}$ variable
\begin{multline}
\frac{\partial^j f(X)}{\partial X_{n_1}\ldots \partial X_{n_j}} =  \int_0^{\infty}  dt \frac{e^{-(n_1 + \ldots +  n_j)t}}{\sqrt{1 - e^{-2n_j t}}}  \left\{ \int dZ \frac{\partial P(Z)}{\partial Z_{n_j}} \frac{\partial^{j-1} \phi(\tilde{X})}{\partial \tilde{X}_{n_1}\ldots \partial \tilde{X}_{n_j}}  - \left[ P(Z) \frac{\partial^{j-1} \phi(\tilde{X})}{\partial \tilde{X}_{n_1}\ldots \partial \tilde{X}_{n_{j-1}}}\right]_{-\infty}^{\infty}\right\}  \\
 =  -\int_0^{\infty} dt  \frac{e^{-(n_1 + \ldots + n_j)t}}{\sqrt{1 - e^{-2n_j t}}}  \int dZ P(Z) \frac{Z_{n_j}}{n_j} \frac{\partial^{j-1} \phi(\tilde{X})}{\partial \tilde{X}_{n_1}\ldots \partial \tilde{X}_{n_{j-1}}} .
\end{multline}
Thus, using $\E[|Z_{n_j}|] = \sqrt{\frac{2n_j}{\pi}}$ for $Z_{n_j} \sim N(0,n_j)$, gives
\[
\left| \frac{\partial^j f(X)}{\partial X_{n_1}\ldots \partial X_{n_j}}\right| \leq    \sup_{\tilde{X}} \left| \frac{\partial^{j-1} \phi(\tilde{X})}{\partial \tilde{X}_{n_1}\ldots \partial \tilde{X}_{n_{j-1}}}\right| \sqrt{\frac{2}{\pi n_j}} \int_0^{\infty} dt  \frac{e^{-(n_1 + \ldots + n_j)t}}{\sqrt{1 - e^{-2n_j t}}} .
\]
Finally, since $n_i \geq 2$ we have $e^{- (n_1 + \ldots + n_j) t} \leq e^{-2jt}$, $(1-e^{-2n_j t})^{-\frac{1}{2}} \leq (1-e^{-4t})^{-\frac{1}{2}}$ and
\[
 \int_0^{\infty} dt  \frac{e^{-2j t}}{\sqrt{1 - e^{-4t}}} = \frac{1}{2} \frac{2^{j-2} \Gamma(j/2)^2}{(j-1)!},
\]
which leads directly to (\ref{Eqn: Function bounds}).
\end{proof}

\begin{proof}[Proof of Theorem \ref{Thm: Stein theorem}]Let $f$ be connected to $\phi$ via the Stein equation (\ref{Eqn: Stein equation}). Since $(M,M')$ are an exchangeable pair so are the random variables $Y':= Y(M')$ and $Y := Y(M)$, hence $E[\delta f] = \E[f(Y')] - \E[f(Y)] = 0$. Therefore, expanding $f(Y')$ in a Taylor series about $Y$ and substituting for the expressions (\ref{Eqn: Drift term}) and (\ref{Eqn: Diffusion term}) we get
\begin{align*}
 0 & =  \frac{1}{C_N}(\E[f(Y')] - \E[f(Y)]) \nonumber \\
 & =  \frac{1}{C_N}\E\left[\sum_{n=2}^k \E[\delta Y_n|M]\frac{\partial f}{\partial Y_n} + \frac{1}{2} \sum_{n,m=2}^k \E[\delta Y_n \delta Y_m|M]\frac{\partial^2 f}{\partial Y_n\partial Y_m} + \E[S_f(M,M')|M] \right] \nonumber \\
 & =  \E[\A f(Y(M))] + \E\left[\sum_{n=2}^k R_n(M)\frac{\partial f}{\partial Y_n} +\frac{1}{2} \sum_{n,m=2}^k R_{nm}(M)\frac{\partial^2 f}{\partial Y_n\partial Y_m} + \frac{1}{C_N}\E[S_f(M,M')|M] \right], \nonumber 
\end{align*}
where $S_f(M,M')$ is the integral form of the remainder obtained in Taylor's theorem
\[
S_f(M,M')  = \frac{1}{3!}\sum_{n,m,l=2}^k\delta Y_n \delta Y_m \delta Y_l \int_0^1 dv (1-v)^2\frac{\partial^3f((1-v)Y + vY')}{\partial Y_n \partial Y_m \partial Y_l}.
\]
Using $\int_0^1 dv (1-v)^2 = \frac{1}{3}$ means $|S_f(M,M')| \leq \frac{1}{3!} \frac{1}{3}\|\nabla^3 f\| \sum_{n,m,l=2}^k |\delta Y_n \delta Y_m \delta Y_l |$
and so a direct substitution of Stein's equation (\ref{Eqn: Stein equation}) yields
\begin{align}
 |\E[\phi(Y)] - \E[\phi(Z)]| & \leq  \|\nabla f\| \sum_{n=2}^k \E|R_n(M)|  +\frac{1}{2} \|\nabla^2 f\| \sum_{n,m=2}^k \E|R_{nm}(M)|  + \frac{1}{18} \|\nabla^3 f\| \sum_{n,m,l=2}^k \E|R_{nml}(M)| \nonumber \\
& =  \Rc^{(1)}\|\nabla f\|  +\frac{1}{2} \Rc^{(1)} \|\nabla^2 f\| + \frac{1}{18} \Rc^{(1)} \|\nabla^3 f\|. \nonumber 
\end{align}
Finally, using Lemma \ref{Lem: Function bounds} we have $\|\nabla^j f\| \leq r_j\|\nabla^{j-1} \phi\|$ with explicit values for the $r_j$.
\end{proof}

\section{Expectations in the RITE}\label{App: Expectation in RITE}

\begin{proof}[Proof of Lemma \ref{Lem: Regular tournament expectation}]In order to prove the lemma we use the ideas of McKay \cite{McKay-1990}, who was originally interested in establishing the asymptotic number of regular tournaments. This was achieved via what he describes as a saddle-point argument, which we adapt here for our current purposes. The main idea is to rewrite the expectation $\E[H_E]$ in terms of a trigonometric integral (see Equation (\ref{Eqn: expectation expression})), with $N$ angles $\theta_p$ corresponding to each of the $N$ rows in the matrix $H$. Crucially the integrand depends only on the differences $\theta_p - \theta_q$ of these angles and is maximised when all angles are equal. Therefore we show the main contribution comes from the region where $\theta_p \approx \theta_q$ for all $p,q$ and the remaining regions are negligible in the limit of large $N$.

To construct the appropriate integral expression we begin with the following characteristic function
\[
\chi_{\RT_N}(H) = \left\{\begin{array}{ll}
0 & H \notin \RT_N \\
1 & H \in \RT_N . \end{array}\right.
\]
An analytical expression for $\chi_{\RT_N}(H)$ may be achieved via the Kronecker delta function. If we let $S_p = -\sum_q iH_{pq}$ be the row sums then our matrix $H$ belongs to $\RT_N$ only if $S_p = 0$ for all $p$. Therefore
\begin{equation}\label{Eqn: RT char fnc}
\chi_{\RT_N}(H)  =  \prod_p \delta_{S_p,0} = \prod_p  \frac{1}{2\pi} \int_0^{2 \pi} d \theta_p \exp\left(i S_p \theta_p\right)
 = \frac{1}{(2\pi)^N} \int_{0}^{2\pi} d^N \theta \prod_{p < q}  \exp\left(H_{pq}( \theta_p - \theta_q) \right),
\end{equation}
where we have used that $H_{pq} = -H_{qp}$. We notice in the expressions above that, since $S_p$ is always even, the integrand is invariant under the shift $\theta_p \mapsto \theta_p + \pi$ for any $p$, and so
\begin{equation}
\chi_{\RT_N}(H)  =  \frac{1}{\pi^N} \int_{-\frac{\pi}{2}}^{\frac{\pi}{2}} d^N \theta \prod_{p < q}  \exp\left(H_{pq}( \theta_p - \theta_q) \right).
\end{equation}
Summing over all possible matrices $H \in \TT_N$ and weighting by this characteristic function leads to the following integral expression for the number of regular tournaments and evaluated by McKay \cite{McKay-1990}
\begin{multline}\label{Eqn: RT integral}
|\RT_N| = \sum_{H \in \TT_N} \chi_{\RT_N}(H) = \frac{2^{N(N-1)/2}}{\pi^N} \int_{-\frac{\pi}{2}}^{\frac{\pi}{2}} d^N \theta \prod_{ p < q }  \cos( \theta_p - \theta_q) \\
=  \frac{2^{N(N-1)/2}}{\pi^{N-1}}\left(\frac{N}{e}\right)^{\frac{1}{2}}\left(\frac{2 \pi}{N}\right)^{\frac{N-1}{2}}(1 + \O(N^{-\frac{1}{2}+\epsilon})).
\end{multline}
Using the same approach we can evaluate the expectation in Lemma \ref{Lem: Regular tournament expectation}. Using the characteristic function (\ref{Eqn: RT char fnc}) the expectation (\ref{Eqn: RT expectation}) is therefore
\begin{equation}\label{Eqn: expectation expression}
\E_{\RT_N}[H_{E}] = \frac{1}{|\RT_N|} \sum_{H \in \TT_N}H_{E}\chi_{\RT_N}(H) = \frac{2^{N(N-1)/2}i^k}{\pi^N |\RT_N|} I,
\end{equation}
where
\[
I = \int_{-\frac{\pi}{2}}^{\frac{\pi}{2}} d^N \theta  \prod_{(p,q) \in E}  \sin(\theta_p - \theta_q)  \prod_{(p,q) \in E^c}\cos( \theta_p - \theta_q)
\]
and $E^c = \{(p,q) : 1 \leq p < q \leq N \}\setminus E$.

To evaluate the integral $I$ we split the integration range into those parts which are dominant and subdominant. To this end let us define the following quantities
\begin{itemize}
\item $A_s = [(s-4)\pi/8,(s-3)\pi/8]$, so in particular $[-\frac{\pi}{2},\frac{\pi}{2}] = \bigcup_{s=0}^{7} A_s$.
\item We therefore write $\bs = (s_1,\ldots,s_{N})$, for $s_p =0,1,\ldots,7$ to signify that the $N$-tuple of angles is in the specific region $(\theta_1,\theta_2,\ldots,\theta_N) \in V(\bs) = A_{s_1} \times A_{s_2} \times \ldots \times A_{s_N}$.
\item $n_j = n_j(\bs) = \#\{p :s_p = j\}$. This counts the number of angles $\theta_p$ in the segment $A_j$. 
\item $D^{(1)} = \{\bs : n_j + n_{j+1(8)}+ n_{j+2(8)} + n_{j+3(8)} = N \ \text{for some} \ j=0,\ldots,7\}$ (where $i+1(8)$ refers to $i+1$ modulo 8 etc.). Thus if $\bs \in D^{(1)}$ this means all angles $\theta_p$ are contained in the region $A_j \cup A_{j+1(8)} \cup A_{j+2(8)} \cup A_{j+3(8)}$, for some $j$.
\item $D^{(2)} = \{ \bs \in \{0,\ldots,7\}^{N}\} \setminus D^{(1)}$ denotes all other possible placements of the angles $(\theta_1,\theta_2,\ldots,\theta_N)$.
\end{itemize}

Thus, $I = J^{(1)} + J^{(2)}$, where 
\begin{equation}\label{Eqn: J integral}
J^{(t)} = \sum_{\bs \in \D^{(t)}} \int_{V(\bs)}  d^N \theta  \prod_{(p,q) \in E}  \sin(\theta_p - \theta_q)  \prod_{(p,q) \in E^c}\cos( \theta_p - \theta_q).
\end{equation}
We will show subsequently that 
\begin{equation}\label{Eqn: J1 result}
|J^{(1)}| \leq  \sqrt{N} \left(\frac{2\pi}{N}\right)^{\frac{N-1}{2}} \O\left(N^{-\frac{k}{2}},\right)
\end{equation}
where $k = |E|$ is the number of edges in $E$, and $|J^{(2)}|$ is negligible in comparison in the large $N$ limit. Hence, 
inserting the expression for $|\RT_N|$, gives $\E_{\RT_N}[H_{E}] = \O(N^{-k/2})$, as desired.
\vspace{15pt}

We begin by showing the result (\ref{Eqn: J1 result}) for $J^{(1)}$ which provides the leading contribution. From the form of $D^{(1)}$ we see that all angles are contained in a range $[-\frac{\pi}{2},\frac{\pi}{2}]$ up to translation\footnote{This holds true even if the sets are disconnected, e.g. if all angles are contained in $A_0 \cup A_1 \cup A_6 \cup A_7$ then we can first make a translation of $\theta_p \mapsto \theta_p - \pi$ to all angles in $A_6 \cup A_7$ which does not change the value of the integral, so all angles are contained in $[-\pi,\pi/2]$.}. The sets $D^{(1)}_i: =  \{\bs : n_i + n_{i+1(8)} + n_{i+2(8)} + n_{i+3(8)} = N \}$ are not necessarily disjoint for $i \neq j$ so $D^{(1)} \subset \bigcup_{i=0}^7 D^{(1)}_i$. But the sum in (\ref{Eqn: J integral}) is the same when restricted to any of the $D^{(1)}_i$. Thus
\begin{multline}
|J^{(1)}| \leq  \sum_{i=0}^7  \sum_{\bs \in \D^{(1)}_i} \int_{V(\bs)} d^N \theta \prod_{(p,q) \in E}  |\sin(\theta_p - \theta_q)|  \prod_{(p,q) \in E^c}|\cos( \theta_p - \theta_q)|  \\
= 8 \int_{-\frac{\pi}{4}}^{\frac{\pi}{4}} d^N \theta \prod_{(p,q) \in E}  |\sin(\theta_p - \theta_q)|  \prod_{(p,q) \in E^c}|\cos( \theta_p - \theta_q)| .
\end{multline}
We are now in a position to use the following bounds
\begin{equation}\label{Eqn: sin and cos bounds}
|\sin(x)| \leq |x| \exp\left(-\frac{1}{2}x^2\right) \qquad \text{and} \qquad
|\cos(x)| \leq \exp\left(-\frac{1}{2}x^2\right),
\end{equation}
which are valid for  $|x| \leq \frac{\pi}{2}$. Inserting these, employing the transition $\theta_p \mapsto \theta_p - \theta_N$ for all $p=1,\ldots,N-1$, integrating over the redundant $\theta_N$ and extending the integration range to the whole real line leads to
\begin{multline}
|J^{(1)}| \leq 8 \int_{-\frac{\pi}{4}}^{\frac{\pi}{4}} d^N \theta \prod_{(p,q) \in E} |\theta_p - \theta_q | \exp\left(-\frac{1}{2}\sum_{1\leq p<q \leq N} (\theta_p - \theta_q)^2 \right) \\
\leq \frac{8\pi}{2} \int_{-\infty}^{\infty} d^{N-1} \theta \prod_{(p,q) \in E} |\theta_p - \theta_q | e^{-\frac{1}{2}\theta^T\Sigma_{N-1}^{-1}\theta},
\end{multline}
where $\theta = (\theta_1,\ldots,\theta_{N-1})$.  The covariance matrix is therefore
\[
\Sigma_{N-1}^{-1} = N\mathbf{I}_{N-1} - \mathbf{E}_{N-1}.
\]
Here $\mathbf{I}_{r}$ denotes the $r \times r$ identity matrix and $\mathbf{E}_{r}$ the $r \times r$ matrix in which every element is 1. The inverse can be easily verified to be
\begin{equation}\label{Eqn: Covariance matrix}
\Sigma_{N-1} = \frac{1}{N}(\mathbf{I}_{N-1} + \mathbf{E}_{N-1})
\end{equation}
and thus $\det(\Sigma_{N-1}) = N^{1-(N-1)}$. Hence, using H\"{o}lder's inequality,
\begin{equation}\label{Eqn: First integral estimate}
|J^{(1)}| \leq \sqrt{N} 4\pi \left(\frac{2\pi}{N}\right)^{\frac{N-1}{2}} \E_{\theta}\bigg[ \prod_{(p,q) \in E_{\bp}} |\theta_p - \theta_q| \bigg] \leq  \sqrt{N} 4\pi \left(\frac{2\pi}{N}\right)^{\frac{N-1}{2}} \prod_{(p,q) \in E_{\bp}} \sqrt{\E_{\theta}[ (\theta_p - \theta_q)^2]},
\end{equation}
where 
\[
\E_{\theta}[f(\theta)] :=  \frac{1}{\sqrt{(2\pi)^{N-1}\det(\Sigma)}}\int_{-\infty}^{\infty} d^{N-1} \theta f(\theta) e^{-\frac{1}{2}\theta^T \Sigma^{-1} \theta}.
\]
Using the form of the covariance matrix (\ref{Eqn: Covariance matrix}) the Gaussian expectation of two random variables is $\E_{\theta}[\theta_p \theta_q] = \Sigma_{pq} = \frac{1}{N}(\delta_{pq} + 1)$. Therefore $\E_{\theta}[ (\theta_p - \theta_q)^2] = \E_{\theta}[ \theta^2_p] - 2\E[\theta_p\theta_q] + \E[\theta_q^2] = 2/N$ and so
\[
|J^{(1)}| \leq  \sqrt{N} 4\pi \left(\frac{2\pi}{N}\right)^{\frac{N-1}{2}} \left(\frac{2}{N}\right)^{k/2}.
\]

\vspace{15pt}

We now turn to the evaluation of $J^{(2)}$. Due to the condition $\sum_j n_j = N$, we have that at least one of $n_7+n_0$, $n_1 + n_2$, $n_3 + n_4$ and $n_5 + n_6$ is greater than or equal to $N/4$. Suppose this is the case for $n_3 + n_4$. Let us denote $A = A_3 \cup A_4 = [-\frac{\pi}{8},\frac{\pi}{8}]$, $B = A_2 \cup A_5 = [-\frac{\pi}{4},-\frac{\pi}{8}] \cup [\frac{\pi}{8},\frac{\pi}{4}]$ and $C = A_0 \cup A_1 \cup A_6 \cup A_7 =  [-\frac{\pi}{2},-\frac{\pi}{4}] \cup [\frac{\pi}{4},\frac{\pi}{2}]$, with $n_A = n_3 + n_4, n_B = n_2+n_5$ and $n_C = n_0 + n_1 + n_6 + n_7$ accordingly. If we write $\F := \{ \bs \in D^{(2)} : n_A \geq N/4 \}$ and account for the four possibilities of having at least $N/4$ angles in the particular segment then
\begin{equation}\label{Eqn: J2 expression}
|J^{(2)}| \leq 4 \sum_{\bs \in \F} \int_{V(\bs)} d^{N} \theta \prod_{(p,q) \in E}  |\sin(\theta_p - \theta_q)|  \prod_{(p,q) \in E^c}|\cos( \theta_p - \theta_q)| .
\end{equation}
In addition, we split $\F = \F_{>} \cup \F_{\leq}$, where for some $\epsilon >0$ we have $\F_{>} = \{\bs \in \F : n_C > N^\epsilon\}$ and $\F_{\leq} = \{\bs \in \F : n_C \leq N^\epsilon\}$ and evaluate each part separately. If $\theta_p \in A$ and $\theta_q \in C$ (or vice versa) then $|\theta_p - \theta_q| \geq \pi/8$ and so $|\cos(\theta_p - \theta_q)| \leq \cos(\pi/8) = e^{-c}$ for $c = -\log(\cos(\pi/8))>0$. In addition, for $\theta_p,\theta_q \in A \cup B$ and $\theta_p,\theta_q \in C$ we can employ the bounds (\ref{Eqn: sin and cos bounds}), and for all others write $|\sin(\theta_p - \theta_q)| \leq 1$ and $|\cos(\theta_p - \theta_q)|$. Therefore, using the arguments above for the Gaussian integral, the restriction of (\ref{Eqn: J2 expression}) to $\F_{>}$ satisfies
\begin{multline}\label{Eqn: F> expression}
\sum_{\bs \in \F_{>}} \int_{V(\bs)} d^{N} \theta \ \prod_{(p,q) \in E}| \sin(\theta_p - \theta_q)|  \prod_{(p,q) \in E} |\cos( \theta_p - \theta_q)| \\
\leq \frac{\pi^2}{4^2} \sum_{\bs \in \F_{>}} e^{-c(n_An_C - k_{AC})}  \sqrt{(n_A + n_B)n_C}\left(\frac{2\pi}{n_A + n_B}\right)^{\frac{n_A + n_B -1}{2}} \\
\times \left(\frac{2}{n_A + n_B}\right)^{\frac{k_{A\cup B}}{2}} \left(\frac{2\pi}{n_C}\right)^{\frac{n_C-1}{2}} \left(\frac{2}{n_C}\right)^{\frac{k_{C}}{2}},
\end{multline}
where $k_{AC} = \#\{(p,q) \in E : \theta_p \in A, \theta_q \in C\}$, $k_{A \cup B} =  \#\{(p,q) \in E : \theta_p, \theta_q \in A\cup B\}$ and $k_{C} =  \#\{(p,q) \in E : \theta_p, \theta_q \in C\}$. Now, given that $k_{AC} \leq k$, we have $e^{ck_{AC}} \leq e^{ck}$. Also, since $k_{A \cup B} + k_C \leq k$ and $n_A + n_B$ and $n_C$ cannot be equal to zero for $\bs \in \F$
\[
\left(\frac{2}{n_A + n_B}\right)^{\frac{k_{A\cup B}}{2}}  \left(\frac{2}{n_C}\right)^{\frac{k_{C}}{2}} \leq 2^{k/2}.
\]
In addition, $n_An_C \geq \frac{1}{4}N^{1+\epsilon}$ for $\bs \in \F_{>}$, so the expression in (\ref{Eqn: F> expression}) is less than or equal to
\begin{multline}\label{Eqn: F> expression 2}
\frac{\pi^2}{4^2}F_k e^{-\frac{c}{4}N^{1 + \epsilon}} \sum_{\bs \in \F_{>}} \sqrt{(n_A+n_B)n_C}\left(\frac{2\pi}{n_A + n_B}\right)^{\frac{n_A + n_B -1}{2}}\left(\frac{2\pi}{n_C}\right)^{\frac{n_C -1}{2}} \\
\leq \frac{\pi^2}{4^2}F_k  e^{-\frac{c}{4}N^{1 + \epsilon}} \sum_{r = N^{\epsilon}}^N \choose{N}{r} \sqrt{(N-r)r}\left(\frac{2\pi}{N-r}\right)^{\frac{N-r -1}{2}}\left(\frac{2 \pi}{r}\right)^{\frac{r-1}{2}},
\end{multline}
where $F_k = 2^{k/2}e^{ck}$ and we have used $ r= n_C$ and $N-r = n_A + n_B$. The factor $\choose{N}{r}$ accounts for the number ways of placing $r$ angles in $C$ and $N-r$ angles in $A \cup B$. The summand is maximised when $r = N/2$ and so, using the bound $\sqrt{2\pi n}n^ne^{-n} \leq n! \leq 2\sqrt{2\pi n}n^ne^{-n}$ when $n \geq 1$ for the factorial, we get the contribution from $\F_{>}$ is less than or equal to 
\begin{multline}
 (N- N^{\epsilon}) \frac{\pi^2}{4^2}F_k e^{-\frac{c}{4}N^{1 + \epsilon}}  \frac{2\sqrt{2\pi N}N^N}{(N\pi)(N/2)^N} \frac{N}{2} \left(\frac{2 \pi}{N}\right)^{\frac{N-1}{2}}\left(\frac{2 \pi}{N}\right)^{-\frac{1}{2}} 2^{N/2 - 1} \\
 = \O(N^{3/2}) \sqrt{N}\left(\frac{2 \pi}{N}\right)^{\frac{N-1}{2}} \exp\left(-\frac{c}{4}N^{1 + \epsilon} + \frac{3\ln(2)}{2}N\right),
\end{multline}
which is negligible in comparison to the contribution from $J^{(1)}$ given in (\ref{Eqn: J1 result}).

This leaves the evaluation of $\F_{\leq}$. If we restrict the expression (\ref{Eqn: J2 expression}) to $\F_{\leq}$ and follow exactly the same steps as for the contribution from $\F_{>}$ above we get that, since $n_A \geq N/4$ and $n_C \geq 1$,
\begin{multline}\label{Eqn: F<= expression}
\sum_{\bs \in \F_{\leq}} \int_{V(\bs)} d^{N} \theta \ \prod_{(p,q) \in E}| \sin(\theta_p - \theta_q)|  \prod_{(p,q) \in E} |\cos( \theta_p - \theta_q)| \\
\leq \frac{\pi^2}{4^2}F_k  e^{-\frac{c}{4}N} \sum_{r = 1}^{N^{\epsilon}} \choose{N}{r} \sqrt{(N-r)r}\left(\frac{2\pi}{N-r}\right)^{\frac{N-r -1}{2}}\left(\frac{2 \pi}{r}\right)^{\frac{r-1}{2}},
\end{multline}
which matches (\ref{Eqn: F> expression 2}), except for the exponential factor and the summation range. Now, using the bound for the factorial and removing a factor of $\sqrt{N} (2\pi/N)^{(N-1)/2}$ gives that (\ref{Eqn: F<= expression}) is less than or equal to
\[
\leq \frac{\pi^2}{4^2}F_k  e^{-\frac{c}{4}N} \sqrt{N} \left(\frac{2\pi}{N}\right)^{\frac{N-1}{2}} \sum_{r = 1}^{N^{\epsilon}}  ((1 - r/N)^{N})^{-3/2} \left(\frac{N - r}{r}\right)^{3\frac{r}{2}} (r/2\pi)^{1/2} \left(\frac{N}{N-r}\right)^{-1/2}.
\]
Finally, $r/N = \O(N^{\epsilon-1})$ we have $(1 - r/N)^N = \exp(N\ln(1 - r/N)) = \exp(-r + \O(r^2/N)) = \exp(N^{\epsilon} + \O(N^{2\epsilon-1}))$ and $((N - r)/r)^{3r/2} = \exp(3N^{\epsilon}\ln(N^{1-\epsilon} - 1)/2)$. Therefore the contribution from (\ref{Eqn: F<= expression}) is of the form
\[
\O(N^{\epsilon+1/2}) \sqrt{N} \left(\frac{2\pi}{N}\right)^{\frac{N-1}{2}}\exp\left(-\frac{c}{4}N + \frac{3}{2}N^{\epsilon} + \frac{3}{2}N^{\epsilon}\ln(N^{1-\epsilon} - 1) + \O(N^{2\epsilon-1})  \right),
\]
which, again, is negligible in comparison to (\ref{Eqn: J1 result}).
\end{proof}

\bibliographystyle{unsrt}
\bibliography{ref}

\end{document}